 \numberwithin{equation}{section}
\numberwithin{table}{section}
\theoremstyle{plain}
\newtheorem{theorem}{Theorem}[section]
\newtheorem{lemma}[theorem]{Lemma}
\newtheorem{proposition}[theorem]{Proposition}
\newtheorem{definition}[theorem]{Definition}
\newtheorem*{example}{Example}
\theoremstyle{remark}
\newtheorem*{remark*}{Remark}
\newtheorem{remark}{Remark}
\newcommand{\C}{\mathbb{C}}
\newcommand{\N}{\mathbb{N}}
\newcommand{\Q}{\mathbb{Q}}
\newcommand{\R}{\mathbb{R}}
\newcommand{\Z}{\mathbb{Z}}
\renewcommand{\H}{\mathbb{H}}
\newcommand{\T}{\mathbb{T}}
\newcommand{\TT}{\mathbb{T}}
\newcommand{\D}{\mathbf{D}}
\def\O{\mathcal O}
\def\({\left(}
\def\){\right)}
\newcommand{\frakp}{\mathfrak p}
\newcommand{\sm}[4]{\left(\begin{smallmatrix} #1 & #2 \\ #3 & #4\end{smallmatrix}\right) }
\newcommand{\zxz}[4]{\begin{pmatrix} #1 & #2 \\ #3 & #4 \end{pmatrix}}
\newcommand{\abcd}{\zxz{a}{b}{c}{d}}
\newcommand{\sabcd}{\sm{a}{b}{c}{d}}
\newcommand{\leg}[2]{\left( \frac{#1}{#2} \right)}
\newcommand{\SL}{\text {\rm SL}}
\newcommand{\into}{\hookrightarrow}
\newcommand{\onto}{\twoheadrightarrow}
\newcommand{\bij}{%
  \hookrightarrow\mathrel{\mspace{-15mu}}\rightarrow
}
\newcommand{\sgn}{\operatorname{sgn}}
\newcommand{\End}{\operatorname{End}}
\newcommand{\GL}{\operatorname{GL}}
\newcommand{\Gal}{\operatorname{Gal}}
\newcommand{\I}{\operatorname{i}}
\newcommand{\hatT}{\widehat{T}}
\begin{document}

\title{On $p$-adic harmonic Maass functions}
\author{Michael J. Griffin}
\address{Department of Mathematics, Brigham Young University, Provo, UT 84602}
\email{mjgriffin@math.byu.edu}

\begin{abstract} Modular and mock modular forms possess many striking $p$-adic properties, as studied by Bringmann, Guerzhoy, Kane, Kent, Ono, and others. 
Candelori developed a geometric theory of harmonic Maass forms arising from the de Rham cohomology of modular curves. In the setting of over-convergent $p$-adic modular forms, Candelori and Castella showed this leads to $p$-adic analogs of harmonic Maass forms. 

 In this paper we take an analytic approach to construct $p$-adic analogs of harmonic Maass forms of weight $0$ 
 with square free level.  Although our approaches differ, where the two theories intersect the forms constructed are the same. However our analytic construction defines these functions on the full super singular locus as well as on the ordinary locus.

As with classical harmonic Maass forms, these $p$-adic analogs are connected to weight $2$ cusp forms and their modular derivatives are weight $2$ weakly holomorphic modular forms. Traces of their CM values also interpolate the coefficients of half integer weight modular and mock modular forms. We demonstrate this through the construction of $p$-adic analogs of two families of theta lifts for these forms.
 
\end{abstract}

\maketitle 

\section{Introduction and statement of results}

Serre \cite{Serre} introduced the notion of a $p$-adic modular form as the limit of a sequence of modular forms with $p$-adically convergent $q$-expansions. This theory has been expanded by Dwork \cite{Dwork}, Katz \cite{Katz}, Hida \cite{Hida} and many others, filling out a beautiful picture in terms of the analysis and geometry of the modular curve and the Hecke algebra. 

The $p$-adic properties of mock modular forms are less well-studied. However Bringmann, Guerzhoy, Kane, Kent, Ono, and others (see for instance \cite{MockAsPAdic,PAdicCouplingHalf,ZagiersAdele,PAdicCoupling}) have demonstrated a number of striking examples.
In his masters thesis, Candelori \cite{Candelori} defined a $p$-harmonic differential for over-convergent $p$-adic modular forms, and by means of the de Rham cohomology defined $p$-adic analogs of $p$-harmonic Maass forms of weight $0$. In \cite{CandeloriGeometric}, Candelori presented a geometric theory of harmonic Maass forms. Later Candelori and Castella  \cite{CandeloriCastella} considered certain $p$-adic modular forms studied by Bringmann--Guerzhoy--Kane \cite{MockAsPAdic} associated to certain mock modular forms. They showed that these are over-convergent $p$-adic modular forms arising from the de Rham cohomology of modular curves in the same way as do harmonic Maass forms. From this geometric perspective, Candelori and Castella also reproduced results of Guerzhoy-Kent--Ono \cite{PAdicCoupling} on the $p$-adic coupling of mock modular forms with their shadow.

Similar $p$-adic properties are exhibited by both integer and half-integer weight mock modular forms. In fact, some evidence suggests many of these properties carry through certain lifts from integer weight forms to half integer weight forms, including for instance 
the lifts studied by Zagier in \cite{ZagierTraces}, which give the Fourier coefficients of half-integer weight forms as twisted ``traces'' over CM points of fixed discriminants. Unfortunately, we cannot simply apply these traces to general $p$-adic modular forms or over-convergent $p$-adic modular forms, since 
for sufficiently large discriminants these traces will eventually involve points with super singular $j$-invariants.   

Here we construct $p$-adic analogs of weight $0$ harmonic Maass forms from an analytic perspective, from sequences of classical modular functions. These functions converge on the full modular curve, including the super-singular locus. When the forms constructed have \emph{shadows} which are ordinary for $p$ (see Section \ref{SecIP}), 
 then these functions align with those studied by Candelori and Castella. In other cases, the $p$-adic harmonic Maass forms we construct are not standard $p$-adic modular forms, as witnessed by the presence of unbounded denominators in the $q$-series expansions. 

 If $K$ is a field over $\Q$, we will use the notation $M_k(N;K)$ and $M^!_k(N;K)$ respectively to denote the space of weight $k$ holomorphic and weakly holomorphic modular forms on the modular curve $Y_0(N)(K)$ (see Section \ref{SecNotation}). We also consider classical weight $0$ harmonic Maass forms as functions on $Y_0(N)(\C)\simeq\Gamma_0(N)\backslash\H,$ and denote the space of such functions by $H_0(N;\C)$.
Here and throughout, if $K$ is a number field we will use $\sigma$ to denote an infinite place of $K$, and $\frakp$ to denote a finite place lying over a rational prime $p$. We also denote the completion of $K$ at any place $\nu$ by $K_\nu$. With this notation we have the following.

  \begin{theorem}\label{ThmPhmf1}
  Suppose $N$ is a square free positive integer, $K$ a number field, and $\frakp$ a prime of $K$ over a rational prime $p$, not dividing $N$. There exists a $K_\frakp$ vector space $H_0(N;K_\frakp)$ 
   of $\frakp$-adically continuous functions $F^\frakp:Y_0(N)(K_\frakp)\to K_\frakp$, satisfying the following properties:
\begin{enumerate}
\item We have that $M^!_0(N;K_\frakp)\subset H_0(N;K_\frakp).$ \label{ThmFn_Inclusion}
\item Hecke operators and Atkin--Lehner involutions have well defined actions on $H_0(N;K_\frakp)$ and are endomorphisms.
\item Each function in $H_0(N;K_\frakp)$ has a well-defined $q$-expansion in $K_\frakp((q))$.\label{ThmFn_qexp}
\item The usual modular differential operator $\mathcal D$ acts on $H_0(N;K_\frakp)$, and the following  sequence is exact:
$$0\to K_\frakp\into H_0(N;K_\frakp) \xrightarrow{~\mathcal D~}  M^!_2(N;K_\frakp)\to M_2(N;K_\frakp)\to 0.
$$\label{ThmFn_Exact}
\end{enumerate}
  \end{theorem}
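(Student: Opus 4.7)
The approach is analytic: I would construct $H_0(N;K_\frakp)$ as a $\frakp$-adic completion of $M_0^!(N;K_\frakp)$. Concretely, equip $M_0^!(N;K_\frakp)$ with the $\frakp$-adic sup norm $\|F\| = \sup_{x} |F(x)|_\frakp$, where $x$ ranges over $Y_0(N)(K_\frakp)$ with small disks around the cusps removed (so classical weakly holomorphic forms have finite norm), and define $H_0(N;K_\frakp)$ to consist of continuous functions $F^\frakp:Y_0(N)(K_\frakp)\to K_\frakp$ arising as uniform limits of Cauchy sequences $\{F_n\}\subset M_0^!(N;K_\frakp)$ for which $\{\mathcal{D}F_n\}$ also converges in $M_2^!(N;K_\frakp)$ under a companion norm. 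Property (\ref{ThmFn_Inclusion}) is immediate from constant sequences. Property (\ref{ThmFn_qexp}) follows because uniform $\frakp$-adic convergence on a neighborhood of the cusp at infinity, where $q$ serves as a local parameter, forces coefficient-wise convergence of $q$-expansions in $K_\frakp((q))$. Property (2) holds because Hecke operators and Atkin--Lehner involutions act by finite combinations of pullbacks under rational modular correspondences; these preserve the sup norm and commute appropriately with $\mathcal{D}$, so they extend continuously to endomorphisms of $H_0$.

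The substance of the theorem is property (\ref{ThmFn_Exact}). That $\mathcal{D}$ maps $H_0$ into $M_2^!$ is built into the construction. The kernel of $\mathcal{D}$ consists of continuous functions with vanishing formal derivative, and a short rigid-analytic argument using the geometric connectedness of $Y_0(N)$ over $K_\frakp$ reduces this kernel to the constants $K_\frakp$. The final surjection $M_2^!(N;K_\frakp)\to M_2(N;K_\frakp)$ is defined by the cokernel of $\mathcal{D}$, and the exact sequence amounts to the decomposition $M_2^! = M_2 \oplus \mathcal{D}(H_0)$. The first component intersects $\mathcal{D}(H_0)$ trivially: a holomorphic weight $2$ form has nontrivial content that cannot arise as a formal derivative—for instance, an Eisenstein series has nonzero constant term at some cusp, which never appears in the $q$-expansion of $\mathcal{D}F$, and for cusp forms a rigidity argument using Eichler cocycles applied to limit sequences $F_n$ suffices. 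The crux is then the density statement: every $f \in M_2^!(N;K_\frakp)$ lies, modulo $M_2$, in the $\frakp$-adic closure of $\mathcal{D}(M_0^!(N;K_\frakp))$.

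The strategy for density is to first approximate $f$ by classical $K$-rational forms $f_n\in M_2^!(N;K)$ in the appropriate $\frakp$-adic norm, invoke the classical exact sequence (valid over any field containing $\Q$) to obtain $F_n\in M_0^!(N;K)$ with $\mathcal{D}F_n = f_n$ modulo $M_2(N;K)$, and then show $F_n$ converges uniformly on $Y_0(N)(K_\frakp)$. The main obstacle is precisely this uniform convergence across the supersingular locus, where classical modular functions can exhibit $\frakp$-adic growth invisible in their $q$-expansions at infinity. Here the square-free hypothesis on $N$ together with $p\nmid N$ is essential: under these assumptions the rigid-analytic $Y_0(N)(K_\frakp)$ decomposes cleanly into ordinary and supersingular residue disks with standard annular collars, so each $F_n$ admits a Laurent expansion in a local uniformizer at every supersingular $j$-invariant. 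By choosing $f_n$ to have $\frakp$-adically small principal parts at supersingular points—exploiting strong approximation in the space of weight $2$ weakly holomorphic forms with prescribed principal parts—the Laurent coefficients of the corresponding $F_n$ are controlled, yielding uniform convergence on the supersingular annuli. This supersingular control is precisely what the construction gains over the overconvergent $p$-adic modular form framework of Candelori--Castella.
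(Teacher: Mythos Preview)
Your proposal has a genuine gap at its core: the space $H_0(N;K_\frakp)$ the paper constructs is \emph{not} a $\frakp$-adic completion of $M_0^!(N;K_\frakp)$ under a sup norm, and cannot be. The paper is explicit (see the discussion after Theorem~\ref{ThmIP}) that when the associated newform $g$ is not $p$-ordinary, the resulting $F^\frakp$ has $q$-expansion of \emph{negative slope}, i.e.\ unbounded $\frakp$-denominators in its coefficients. Such a function violates the $q$-expansion principle for $p$-adic modular forms and is not a uniform limit of integral weakly holomorphic forms in any reasonable sup norm. Your framework would at best recover the ordinary part of the space, which is precisely the overconvergent Candelori--Castella picture the paper is trying to go beyond.

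Relatedly, your key step for supersingular control is not well-posed. You speak of choosing $f_n\in M_2^!$ with ``$\frakp$-adically small principal parts at supersingular points,'' but weakly holomorphic forms have poles only at cusps; there are no supersingular principal parts to prescribe, and no ``strong approximation'' of the kind you invoke. The paper's mechanism is entirely different: it fixes a newform $g$, forms $H=F^\sigma|_0(a_g(p)-\hatT_p)\in M_0^!(N;K)$, and then applies carefully chosen polynomials $B_n$ in $\hatT_{p^j}$ (built from the root $\beta$ of $x^2-a_g(p)x+p$) to $H$. Convergence on the integral locus, including the full supersingular disk, comes from Lemma~\ref{HeckeLimit}, which shows $f|_0\hatT_{p^n}$ is $p^n$-close to a locally constant function there. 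Convergence toward the cusps uses a separate regularizing sequence $G_n\in\mathcal M^p_0$ with poles allowed in the supersingular locus; the two limits agree on the ordinary overlap. This two-region ``second regularization'' (Lemma~\ref{RegCon2}), together with the newform-by-newform decomposition \eqref{eqn_HMF_decomp}, is the substance of the construction, and nothing in your outline substitutes for it.
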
  
The final map $M^!_2(N;K_\frakp)\to M_2(N;K_\frakp)$ in part (\ref{ThmFn_Exact}) above is obtained by demonstrating an orthogonal decomposition
 $M^!_2(N;K_\frakp)=\operatorname{\mathcal D} H_0(N;K_\frakp)\oplus M_2(N;K_\frakp).$
  
  Each function in the space $H_0(N;K_\frakp)$ is defined using two sequences of modular functions which converge on overlapping regions which cover $Y_0(N)(K_\frakp).$  
 We construct these sequences using the Hecke algebra. 
  The action of the Hecke operators, Atkin--Lehner operators and the differential operator $\mathcal D$ are obtained by acting component-wise on the defining sequences. 

A correspondence exists between certain functions in $H_0(N;K_\frakp)$ and in $H_0(N;\C).$
If $\mu$ is any embedding of fields $\mu: K\into L$, then $\mu$ extends naturally and uniquely to an embedding
\[
\mu : M^!_0(N;K)\into M^!_0(N;L), 
\]
so that $F^\mu|_{Y_0(K)}=F.$  
Similarly, if $L_1$ and $L_2$ are two fields containing $K$, then there is a natural equivalence relation $\simeq_K$ between forms $F_i\in M^!_0(N;L_i)$ which satisfy
\[F_1|_{Y_0(N)(K)}=F_2|_{Y_0(N)(K)}\in M^!_0(N;K).\]

For instance, suppose $K$ is a number field.  If $F^\sigma\in M^!_k(N;\C)$ has a $q$-expansion in $\Q((q))$, then there is a unique corresponding form $F^{\frakp}\in M^!_0(N;K_{ \frakp})$ so that 
\[
F^\sigma\simeq_\Q F^\frakp.
\]
We will establish a similar correspondence between a certain dense subspace of $H_0(N;\C)$ and $H_0(N;K_\frakp)$ respectively. This aim is complicated by the generally-expected  transcendence  of  the $q$-series coefficients and values at algebraic points of these functions. 
 However, this transcendence can be controlled in some key settings. 

For instance, suppose 
$g\in S_2(N;K)$ is a level $N$ newform with $q$-expansion given by $\sum_{n\geq1}a_g(n)q^n$.  Then there is a subspace $H^g_0(N;K,\sigma)\subset H_0(N;K_\sigma)$ of harmonic Maass forms $F$ satisfying the following properties:
\begin{enumerate}
\item The differential operator $\xi_0$  defined in (\ref{DefDXi}) acts on $F$ with 
\[\xi_0 F\in  K\cdot  \frac{g^\sigma}{\|g^\sigma\|},\] where $\|g^\sigma\|$ is the usual Petersson norm of $g^\sigma$.
\item The \emph{holomorphic part} $F^+$ of $F$ (see section \ref{SecMaass_Diff}) at each cusp has a $q$-expansion in $K_{\sigma}((q))$.
\item The principal parts of $F^+$ at each cusp is in $ K[q^{-1}].$ 
\end{enumerate}
Of course we could have written $\C$ or $\R$ instead of $K_{ \sigma}$ in part (2), but this notation suggests the direction in which we will generalize these properties for $H_0(N;K_\frakp)$. 

Suppose $F\in H^g_0(N;K,\sigma)$, with the $q$-expansion of $F^+$ at $\infty$ given by 
\[
F^+=\sum_{n\gg -\infty} a_F(n) q^n.
\]
The $K_\sigma$ vector space containing the coefficients $a_F(n)$ has rank at most $2$: there is some $\alpha^\sigma\in K_{\sigma}$ so that 
 \begin{equation}\label{EqnCoeffTrans}
 a_F(n)-\alpha^\sigma \frac{a_g(n)}{n}\in K.
 \end{equation}
In particular, if $a_g(n)=0$, then $a_F(n)\in K.$ The algebraicity over $K$ of $a_F(n)$ in this case was shown by Bruinier--Ono--Rhoades\cite{BOR};  Candelori \cite{Candelori} showed it is in $K$. 

This method of controlling the transcendental part using the coefficients of the cusp form underlies the $p$-adic coupling of mock-modular forms with their shadow as demonstrated by Guerzhoy--Kent--Ono \cite[Theorem 1.2]{PAdicCoupling}.
%

Similarly, in general we expect the values of $F$ at algebraic points to be transcendental, but there are important exceptions. The CM values of the $j$ function, known as  singular moduli, are algebraic. Zagier~\cite{ZagierTraces} showed that the twisted traces of these singular moduli are coefficients of certain weight $1/2$ and weight $3/2$ modular forms. 
These results have been studied and generalized in several directions, including for other modular functions of higher level, and for harmonic Maass forms. 
 Bruinier--Funke~\cite{BruinierFunke,BruinierFunkeTraces} and Alfes~\cite{AlfesKudlaMillson} have realized these trace maps as theta lifts obtained by taking the inner product of modular functions against certain non-holomorphic theta kernels. 
In many cases it is not hard to see that certain $p$-adic properties of $q$-series are propagated through the lifts. 
 Some of these properties have been explored by Bringmann--Guerzhoy--Kane \cite{PAdicCouplingHalf}. 
 
   Suppose $D$ and $\Delta$ are fundamental discriminants, both squares modulo $4N,$ with $D\Delta<0$.  Let $h$ be chosen so that $h^2\equiv \Delta D\pmod{4N}.$
We define a twisted modular trace $\widetilde {\mathbf{t}}_N (\Delta,D,h) (F)$
of values of $F$ at CM points of discriminant $D\Delta$  in equation (\ref{eqnIntroTrace}). 
 If $F$ is weakly holomorphic with rational coefficients, then this trace 
 is literally a trace over Galois conjugates of CM values of $D^{-\frac12}F$. If $F$ is not weakly holomorphic,
  the algebraicity is connected to the twisted $L$-function 
\begin{equation}\label{TwistedL}
L(g^\sigma,\Delta,s)=\sum_{n\geq 1}\frac{{\small\leg{\Delta}{n}}a_g(n)}{n^s}.
\end{equation}
By $L'(g^\sigma,\Delta,s)$, we denote the derivative in the $s$ variable. Results of Bruinier--Ono and Alfes can be used to control the transcendence of the traces. We package these results in the following proposition.

\begin{proposition}\label{PropTransValsInt}For all $D,\Delta,$ and $h$ satisfying the conditions above and \newline $F\in H^g_0(N;K,\sigma),$ we have that  
\begin{equation}\label{EqnVanL1}
\widetilde {\mathbf {t}}_N(\Delta,D,h) (F) \in K \iff  L(\xi_0 F,\Delta,1)L'(\xi_0 F,D,1)=0.
\end{equation}
More generally, there exist constants $\mathbf b_g(\Delta)\in K$ (dependent on $g$, but independent of the choice of $F$), and some $\boldsymbol \alpha^\sigma_{D}\in K_\sigma$ (independent of $\Delta$) so that 
\begin{equation}\label{EqnTraceTrans}
\widetilde {\mathbf {t}}_N(\Delta,D,h) (F)  - \boldsymbol\alpha^\sigma_{D}  \mathbf b_{g}(\Delta) \in  K.
\end{equation}
%
\end{proposition}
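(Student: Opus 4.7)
The plan is to derive both claims from a single decomposition of the trace as the sum of an algebraic part and a transcendental one, by combining Alfes's twisted Kudla--Millson theta lift with the Bruinier--Ono algebraicity criterion for coefficients of half-integer weight mock modular forms attached to weight $2$ newforms.

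First I would realize the generating series $\sum_\Delta \widetilde{\mathbf t}_N(\Delta,D,h)(F)\, q^{|\Delta|}$, with $D$ and $h$ fixed and $\Delta$ ranging over admissible fundamental discriminants, as the $h$-th component of the holomorphic part of a weight $1/2$ harmonic Maass form $\Phi_D(F)$. This is the twisted theta lift studied by Bruinier--Funke and Alfes using the Kudla--Millson kernel. The shadow of $\Phi_D(F)$ is, up to an algebraic multiple, the $D$-th Shintani lift of $\xi_0 F$, and thus lies in $K$ times the Shintani lift of $g^\sigma/\|g^\sigma\|$.

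Next I would reduce to a single canonical model. Because $H^g_0(N;K,\sigma)$ is, modulo $M^!_0(N;K)$ and cusp forms, a one-dimensional $K$-space, and because for any $F_0 \in M^!_0(N;K)$ with $K$-rational principal parts the trace $\widetilde{\mathbf t}_N(\Delta,D,h)(F_0)$ lies in $K$ by the Zagier/Bruinier--Funke description of weakly holomorphic CM traces, it suffices to prove (\ref{EqnTraceTrans}) for a fixed representative $F^\flat_g$ with $\xi_0 F^\flat_g = g^\sigma/\|g^\sigma\|$. For this $F^\flat_g$, the Bruinier--Ono theorem expresses the $\Delta$-th coefficient of the holomorphic part of $\Phi_D(F^\flat_g)$ as the sum of an element of $K$ and a transcendental term of the form $\boldsymbol\alpha^\sigma_D \cdot \mathbf b_g(\Delta)$, in which $\mathbf b_g(\Delta) \in K$ is the $\Delta$-th Fourier coefficient of the weight $3/2$ Kohnen plus-space preimage of $g^\sigma$ under the Shimura correspondence, and $\boldsymbol\alpha^\sigma_D \in K_\sigma$ is a transcendental period depending only on $(g,D,\sigma)$. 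This yields (\ref{EqnTraceTrans}).

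Equation (\ref{EqnVanL1}) then follows by combining Kohnen--Zagier (which gives $\mathbf b_g(\Delta)=0 \iff L(g^\sigma,\Delta,1)=0$) with the Gross--Zagier/Bruinier--Ono identification of $\boldsymbol\alpha^\sigma_D$ with a period attached to Heegner divisors on the modular curve, yielding $\boldsymbol\alpha^\sigma_D=0 \iff L'(g^\sigma,D,1)=0$. I expect the main technical step to be verifying that the period $\boldsymbol\alpha^\sigma_D$ extracted from Alfes's lift depends only on $D$ and not on $\Delta$; this amounts to a symmetry statement between the two discriminants in the twisted theta kernel together with the one-dimensionality of the relevant $g$-isotypic component in the Kohnen plus space, so that the $\Delta$-dependence of the transcendental term factors cleanly through $\mathbf b_g(\Delta)$.
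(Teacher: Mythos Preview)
Your overall strategy---package the traces as coefficients of a theta lift, then isolate the transcendental part as a single scalar times a Shimura-corresponding cusp form---is correct and is essentially what the paper does. But there is a confusion in your setup that you should fix.

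With $D$ fixed and $\Delta$ varying, the generating series you write down is the weight $3/2$ Kudla--Millson lift $\vartheta^{3/2}_{D,s}(F)$, not a weight $1/2$ form. (The weight $1/2$ Millson lift goes the other way: it fixes $\Delta$ and lets $D$ vary.) This matters because, once $F$ has vanishing constant terms at all cusps, $\vartheta^{3/2}_{D,s}(F)$ is \emph{weakly holomorphic}; it has no shadow. So your paragraph about the shadow being a Shintani lift is off-target for this object, and your ``main technical step'' of checking that $\boldsymbol\alpha^\sigma_D$ is independent of $\Delta$ is actually immediate: since $\vartheta^{3/2}_{D,s}(F)\in M^!_{3/2,\overline{\tilde\rho}}(N;\C)$ and this space has a $K$-rational basis, there is a cusp form $\mathbf h$ with $\vartheta^{3/2}_{D,s}(F)-\mathbf h$ having all coefficients in $K$; Hecke equivariance (applying $\mathbf T_\ell-a_g(\ell)$ and noting the image is weakly holomorphic with $K$-rational principal part) forces $\mathbf h$ to be a scalar multiple of the Shimura preimage $\mathbf g$ of $g$. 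Writing $\mathbf h=\boldsymbol\alpha^\sigma_D\,\mathbf g$ gives \eqref{EqnTraceTrans} with $\mathbf b_g(\Delta)$ the $(|\Delta|,r)$-th coefficient of $\mathbf g$.

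The paper's proof follows exactly this route (reducing first to vanishing constant terms by subtracting a $K$-rational weakly holomorphic function, as you also do), and then simply cites the Bruinier--Ono algebraicity criterion (Theorem~\ref{ThmVanishingL}) for \eqref{EqnVanL1} directly, rather than deducing it from \eqref{EqnTraceTrans} via Waldspurger and Gross--Zagier as you propose. Your deduction would also work, but is a detour.
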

%
%
In particular, when $F$ is weakly holomorphic, we can take $\boldsymbol\alpha^\sigma_{D}=0.$
The $\mathbf b_g(\Delta)$ in the equation are coefficients of a certain weight $3/2$ modular form which corresponds to $g$ under the Shimura correspondence. The traces $\widetilde {\mathbf {t}}_N(\Delta,D,h) (F)$ themselves can be given in terms of coefficients of a weight $3/2$ harmonic Maass forms, and so (\ref{EqnTraceTrans}) strongly parallels (\ref{EqnCoeffTrans}). Equations (\ref{EqnVanL1}) and (\ref{EqnTraceTrans}) are tied together by work of Waldspurger~\cite{Walds} which shows that 
\[
\mathbf b_{g}(\Delta)=0 \iff L(g^\sigma,\Delta,1)=0.\]
 
If $N$ is square free, and not divisible by the prime $\frakp$, we define a space $H^g_0(N;K,\frakp)$ of $\frakp$-adic harmonic Maass forms which satisfy these same properties discussed above for $q$-expansions and CM values, with $\sigma$ replaced with $\frakp$. Additionally, these forms satisfy a natural correspondence with the space $H^g_0(N;K,\sigma),$ given by matching forms with the same principal parts at cusps. 

\begin{theorem}\label{ThmCorrespondence}
Assume the notation above, with $\sigma$ any infinite place of $K$, and $\frakp$ any finite prime of $K$ not dividing $N$. There exists a subspace $H^g_0(N;K,\frakp)\subset H_0(N;K_\frakp)$ satisfying a one-to-one correspondence with $H^g_0(N;K,\sigma)$. This correspondence maps each function $F^\sigma\in H^g_0(N;K,\sigma)$ to a corresponding $F^\frakp \in H^g_0(N;K,\frakp)$, satisfying the following properties.
\begin{enumerate}
\item The principal parts of the $q$-expansions of ${F^\sigma}^+$ and $F^\frakp$ at all cusps are 
equal. \label{ThmCorr_Principal}

\item If $a_{F^\sigma}$ and $a_{F^\frakp}$ denote the $n$-th coefficients of ${F^\sigma}^+$ and $F^\frakp$ respectively, then there exist $\alpha^\sigma\in K_\sigma$ and $\alpha^\frakp\in K_\frakp$, so that 
\[
\left(a_F^\sigma(n)-\alpha^\sigma \frac{a_g(n)}{n}\right) =
\left(a_F^\frakp(n)-\alpha^\frakp \frac{a_g(n)}{n}\right)\in K.
\]
\label{ThmCorr_AlphaCoeff}


\item 
 If 
  $L(\xi_0F^\sigma,\Delta,1)L'(\xi_0F^\sigma,D,1)=0,$
   then 
\[
\widetilde {\mathbf {t}}_N(\Delta,D,h) (F^\sigma)=\widetilde {\mathbf {t}}_N(\Delta,D,h) (F^\frakp)\in K.
\]
 More generally, there exist $\boldsymbol\alpha^\sigma_{D}\in K_\sigma$ 
 and $\boldsymbol\alpha^\frakp_{D}\in K_\frakp$ so that 
\[
\left(\widetilde {\mathbf {t}}_N(\Delta,D,h) (F^\sigma)  - \boldsymbol\alpha^\sigma_{D}\mathbf b_{g}(\Delta)\right) =
\left(\widetilde {\mathbf {t}}_N(\Delta,D,h) (F^\frakp)  - \boldsymbol\alpha^\frakp_{D}\mathbf b_{g}(\Delta)\right)\in K,
\]
where $\mathbf b_{g}(\Delta)$ is as above. 
\label{ThmCorr_AlphaTrace}

\item The above correspondence is equivariant with respect to the Hecke algebra  and Atkin--Lehner involutions.
\label{ThmCorr_HeckeEqui}
\end{enumerate} 
\end{theorem}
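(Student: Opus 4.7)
The plan is to define $H^g_0(N;K,\frakp)$ by mirroring algebraically the features that single out $H^g_0(N;K,\sigma)$: $K$-rationality of principal parts at all cusps and the constraint that the image under $\mathcal D$ lies in the $g$-isotypic component of $M^!_2(N;K_\frakp)$ (in the sense of matching Hecke eigenvalues with those of $g$, modulo weakly holomorphic forms with no $g$-component). The correspondence is then constructed by transporting the $K$-rational principal-part datum from the archimedean to the $\frakp$-adic side via the exact sequence of Theorem~\ref{ThmPhmf1}(\ref{ThmFn_Exact}).

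Concretely, given $F^\sigma \in H^g_0(N;K,\sigma)$ with principal parts $\{P_c\}_c \subset K[q^{-1}]$, set $f^\sigma := \mathcal D {F^\sigma}^+ \in M^!_2(N;K_\sigma)$, whose principal parts $\mathcal D P_c$ are again $K$-rational. Since $g$ is a newform with $K$-rational Hecke eigenvalues, there is a canonical weakly holomorphic form $f^\frakp \in M^!_2(N;K_\frakp)$ with the same principal parts, lying in the same $g$-isotypic class and differing from $f^\sigma$ only by a $K_\frakp$-multiple of the Hecke-invariant direction spanned by $g$. Applying the exact sequence produces some $F^\frakp \in H_0(N;K_\frakp)$ with $\mathcal D F^\frakp = f^\frakp$, unique up to a constant in $K_\frakp$. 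One fixes this ambiguity by demanding that the constant term at a distinguished cusp equal the $K$-rational part of the corresponding constant term of ${F^\sigma}^+$ singled out by~(\ref{EqnCoeffTrans}). We then take $H^g_0(N;K,\frakp)$ to be the image of this construction.

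Properties (\ref{ThmCorr_Principal}) and (\ref{ThmCorr_HeckeEqui}) follow immediately: principal parts agree by design, and every ingredient (extraction of principal parts, the operator $\mathcal D$, the exact sequence, the $g$-isotypic projection) is equivariant under Hecke and Atkin--Lehner operators. For (\ref{ThmCorr_AlphaCoeff}), dividing by $n$ converts the claim into a rationality assertion about the Fourier coefficients of $f^\sigma$ after subtracting an appropriate $K_\sigma$-multiple of $g$; since $f^\sigma$ and $f^\frakp$ share principal parts and lie in the same $g$-isotypic component, the forms $f^\sigma - \alpha^\sigma g$ and $f^\frakp - \alpha^\frakp g$ are images under the natural embeddings $M^!_2(N;K) \hookrightarrow M^!_2(N;K_\sigma)$ and $M^!_2(N;K) \hookrightarrow M^!_2(N;K_\frakp)$ of a single $K$-rational form, so their Fourier coefficients produce identical $K$-rational sequences $(a_F(n) - \alpha\, a_g(n)/n)_n$.

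The main obstacle is (\ref{ThmCorr_AlphaTrace}). Following Bruinier--Funke and Alfes, $\widetilde{\mathbf{t}}_N(\Delta,D,h)(F^\sigma)$ is a Fourier coefficient of a weight-$3/2$ harmonic Maass form obtained by pairing $F^\sigma$ against a non-holomorphic theta kernel. To prove the identity I would construct a $\frakp$-adic analog of this theta lift on $H_0(N;K_\frakp)$, show that its Fourier coefficients compute $\widetilde{\mathbf{t}}_N(\Delta,D,h)(F^\frakp)$ (in particular, that they detect CM values even on the supersingular locus), and then deduce via the correspondence together with Hecke equivariance that the $K$-rational parts of the coefficients on the archimedean and $\frakp$-adic sides coincide. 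The transcendental obstruction $\boldsymbol\alpha^\sigma_D\,\mathbf{b}_g(\Delta)$, which vanishes precisely when $L(g^\sigma,\Delta,1)=0$ by Waldspurger, should admit a parallel $\frakp$-adic counterpart with the same $K$-rational residue. Verifying that the $\frakp$-adic theta lift genuinely interpolates traces over CM points across the supersingular locus, and that $\mathbf{b}_g(\Delta)$ emerges as the correct normalizing factor in both settings, is the main technical hurdle.
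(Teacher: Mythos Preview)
Your approach has a circularity problem and an underspecification problem that together constitute a genuine gap. In the paper, the space $H_0(N;K_\frakp)$ is \emph{defined} as the direct sum $\bigoplus H^g_0(M;K,\frakp)|W_\delta$ over newforms $g$ of level $M\mid N$ (equation~(\ref{eqn_HMF_decomp})), and Theorem~\ref{ThmPhmf1} is then deduced from the construction of the isotypic pieces. So invoking the exact sequence of Theorem~\ref{ThmPhmf1}(\ref{ThmFn_Exact}) to \emph{produce} $H^g_0(N;K,\frakp)$ reverses the logical order. More seriously, even granting the exact sequence, your proposal does not determine $F^\frakp$: you say there is a ``canonical'' $f^\frakp\in M^!_2(N;K_\frakp)$ with the prescribed principal parts, differing from a $K$-rational form by $\alpha^\frakp g$ for some $\alpha^\frakp\in K_\frakp$, but you never say which $\alpha^\frakp$. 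Every choice of $\alpha^\frakp$ yields a form with the correct principal parts, yet only one lies in the image $\D H_0(N;K_\frakp)=S_2^\perp(N;K_\frakp)$, and that image is itself characterized only after the construction (Theorem~\ref{ThmIP}). The paper resolves this by building $F^\frakp$ directly as a second-regularized limit of weakly holomorphic functions $F^\sigma|_0 A_n$, where $A_n=(a_g(p)-\hatT_p)B_n$ is an explicit Hecke operator (equations~(\ref{DefBn}),~(\ref{prodH})); this limit pins down $\alpha^\frakp$ intrinsically and simultaneously establishes that $F^\frakp$ is a genuine $\frakp$-adically continuous function on all of $Y_0(N)(K_\frakp)$, including the supersingular locus. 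Parts~(\ref{ThmCorr_Principal}), (\ref{ThmCorr_AlphaCoeff}), (\ref{ThmCorr_HeckeEqui}) then fall out because the $A_n$ commute with the Hecke algebra and send $F^\sigma$ to weakly holomorphic forms with $K$-rational coefficients.

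Your outline for part~(\ref{ThmCorr_AlphaTrace}) is broadly correct in spirit and matches the paper's strategy: define $\vartheta^\kappa_{*,*}(F^\frakp)$ as the limit $\lim_n \vartheta^\kappa_{*,*}(F^\sigma)|_{\kappa,\tilde\rho}\mathbf A'_n$, show the coefficients are the CM traces, and then argue as in~(\ref{ThmCorr_AlphaCoeff}). The step you flag as the ``main technical hurdle'' is exactly right, and the paper's key input there is Proposition~\ref{PropDiffHeckeHalf}, whose weight-$3/2$ case rests on Lemma~\ref{LemSqrDenoms} (bounded denominators on a single square class, proved via the generalized Borcherds lift). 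Without that lemma one cannot show the half-integral Hecke differences $\mathbf T_{p^n}-\mathbf T_{p^{n+2}}$ vanish $\frakp$-adically on $\vartheta^{3/2}_{D,s}(F^\sigma)$, and the limit defining the $\frakp$-adic lift need not converge.
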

\begin{remark}\label{RmkCorr}
By transitivity, the correspondence given in Theorem \ref{ThmCorrespondence} extends to any two places $\mu$ and $\nu$ of $K$ finite or infinite which do not divide $N$.
Building on the earlier notation, we write $F^\mu\simeq_{K} F^\nu$ for functions which correspond in this manner. 
\end{remark}
\begin{remark}
Part \ref{ThmCorr_AlphaTrace} of Theorem \ref{ThmCorrespondence} follows from Theorem \ref{ThmThetaConv} which shows that the traces  interpolate $\frakp$-adic properties of half-integer weight harmonic Maass forms. 
\end{remark}

In Section \ref{SecIP} we will outline various structural results about the space $H_0(N;K_{\frakp})$. We will show that this space can be generated by the action of the Hecke algebra and Atkin--Lehner involutions acting on a single element. We show that the image under the modular derivative $\D$ is a distinguished subspace of $M_2^!(N;K_{\frakp})$, orthogonal to $M_2(N;K_{\frakp})$ under a natural $\frakp$-adic analog of the Petersson inner product. This space is distinguished by the $\frakp$-adic \emph{slopes} of the forms (see (\ref{DefSlope})). When the slope 
 is not negative, these align with the over convergent $p$-adic modular forms of Candelori and Castella's  theory. 

For primes $\frakp$ dividing $N$, we can define a similar space 
 which nearly satisfies Theorem \ref{ThmPhmf1}, and subspaces $H_0^g(N;K_\frakp)$ which satisfy Theorem \ref{ThmCorrespondence} parts (1),(2), and (4). However, aside from weakly holomorphic modular functions, the functions in these spaces are not well defined on the supersingular locus. Away from the supersingular locus, the results are $p$-adic modular forms. 
 The construction also converges on the supersingular locus, but the results seem to be incomplete. The limits branch depending on the cusp at which the expansion is taken, and the construction might be termed at best \emph{mock modular}. 
 We will primarily focus on the case $\frakp$ does not divide $N$, with a few exceptions in Theorem \ref{ThmIntegrality} below.

The correspondence described in Theorem \ref{ThmCorrespondence} and Remark \ref{RmkCorr} 
between places of $K$ raises the natural question if there is an adelic theory connecting these forms.  This question requires bounds on the denominators that can arise. 

\begin{theorem}\label{ThmIntegrality}
Let $(F^\nu)_\nu$ be a family of functions $F^\nu\in H_0(N;K,\nu)$ which are equivalent under $\simeq_K$, and such that the principal part of each $F^\nu$ at each cusp is defined over the ring of integers of $K.$ Denote the $q$-expansion of $F^{\nu}$ at a cusp $\rho$ by $\sum_{n\in \Z} a_\rho^{\nu}(n)q^n.$ Then there are integers $\mathcal M_N$,  $\mathcal R_N$, and $\mathcal B_{N,p}$ explicitly defined in section \ref{SecIntegrality}
, all independent of the family $(F^\nu)_\nu$ so that the following are true.
\begin{enumerate}
\item Suppose $\nu=\frakp$ is a finite place of $K$. Given any cusp $\rho$ and a positive integer $n =s^2 t$ with $t$ square free, we have
\[
v_\frakp\left( st\mathcal M_N \sqrt{\mathcal R_N} \cdot a^\frakp_\rho(s^2 t)\right) \geq 0.
\]
In particular, for fixed $n\in \Z$ and cusp $\rho,$ the vector $(a_\rho^\nu(n))_\nu$ is an adele. \label{ThmInt_coeff}

\item  Suppose $z\in Y_0(N)(K)$, with $v_\frakp(j(z))\geq 0,$ not supersingular at $\frakp$ if $\frakp$ divides $N$. Then 
$ \mathcal M_N \mathcal R_N \mathcal B_{N,p}\cdot F^{\frakp}(z)$
is $\frakp$-integral. 
\label{ThmInt_val}
\end{enumerate}
\end{theorem}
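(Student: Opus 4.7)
The plan is to combine the cross-place correspondence $\simeq_K$ from Theorem \ref{ThmCorrespondence} with an analysis of the analytic construction of $H_0(N;K_\frakp)$ as $\frakp$-adic limits of classical modular functions, tracking denominators through the limiting process. Both parts of the theorem reduce to the following structural principle: at each coefficient or value, the family $(F^\nu)_\nu$ decomposes as a place-independent $K$-algebraic term plus a transcendental factor $\alpha^\nu$ (depending on $\nu$) multiplied by a known algebraic quantity. One then bounds each of these pieces separately, reading the global content of the algebraic part off the archimedean places and controlling the transcendental part place by place.

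For Part (\ref{ThmInt_coeff}), given $a^\frakp_\rho(s^2 t)$, I would use Theorem \ref{ThmCorrespondence}(\ref{ThmCorr_AlphaCoeff}) to write
\[
a^\frakp_\rho(s^2 t) \;=\; \beta_{\rho,n} \;+\; \alpha^\frakp\,\frac{a_g(s^2 t)}{s^2 t},
\]
with $\beta_{\rho,n}\in K$ independent of $\frakp$. First I would bound the denominators of $\beta_{\rho,n}$: by $\simeq_K$ it equals the matching archimedean difference for any $\sigma$, hence its denominators are controlled by the classical integrality properties of $q$-expansions of weight-$0$ harmonic Maass forms with principal parts over $\mathcal{O}_K$; these classical denominators, together with a factor coming from the Petersson normalization $\|g^\sigma\|$ and the discriminant of the Hecke field of $g$, contribute $\mathcal M_N$ and $\sqrt{\mathcal R_N}$. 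Second, I would show $\alpha^\frakp$ is $\frakp$-integral up to a constant independent of both $F^\frakp$ and $n$, using a $\frakp$-adic coupling argument in the spirit of Guerzhoy--Kent--Ono that extracts $\alpha^\frakp$ as a limit involving only the principal parts of $F^\frakp$ (integral by hypothesis) and the $\frakp$-ordinary projection of $g$. Third, the saving of a factor $s$ (so that only $st$, rather than $n = s^2 t$, appears) is obtained from the Hecke recurrence for $a_g(s^2 t)$ at primes dividing $s$, combined with the interpolation of Theorem \ref{ThmThetaConv} which relates coefficients at square indices to coefficients of the weight-$3/2$ Shimura lift. The adele statement then follows because $\alpha^\nu$ has $\nu$-valuation zero at all but finitely many places $\nu$ (those lying over the primes of the newform's conductor and of a finite list of bad primes identified in the construction of $H_0(N;K_\frakp)$), while $\beta_{\rho,n}\in K$ has an absolute denominator independent of $\nu$.

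For Part (\ref{ThmInt_val}), the hypothesis $v_\frakp(j(z))\geq 0$, together with the non-supersingularity assumption when $\frakp\mid N$, places $z$ in the $\frakp$-ordinary locus of $Y_0(N)$. On this locus I would invoke the analytic construction directly: the sequence of classical modular functions defining $F^\frakp$ converges $\frakp$-adically at $z$ with an explicit rate controlled by the $\frakp$-adic distance from $z$ to the supersingular locus. Decomposing as before, $F^\frakp(z) = \gamma_z + \alpha^\frakp\cdot \pi_z$ with $\gamma_z\in K$ and $\pi_z$ an algebraic period at $z$, I would bound $\gamma_z$ by the classical algebraicity and integrality of special values (CM or otherwise) of harmonic Maass forms, and bound $\alpha^\frakp$ by the same coupling estimate as in Part (\ref{ThmInt_coeff}). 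The extra constant $\mathcal B_{N,p}$, absent from the $q$-expansion statement, absorbs the uniform $\frakp$-adic convergence rate on the ordinary locus; the cusps do not require this because they are automatically ordinary.

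The main obstacle will be the explicit extraction of the saving factor $s$ in Part (\ref{ThmInt_coeff}): this requires carefully combining the analytic limit construction with the Hecke recurrences for square-index coefficients of the newform $g$ and with the Shimura-type theta lifts of Theorem \ref{ThmThetaConv}, and it is what produces the asymmetric bound $st$ rather than $s^2 t$. A secondary difficulty is the uniform $\frakp$-integrality of the transcendence factor $\alpha^\frakp$ across all $F^\frakp\in H^g_0(N;K,\frakp)$: one must isolate the universal denominators coming from the $\frakp$-adic shadow pairing and separate them cleanly from the variable data of $F^\frakp$, which is precisely what fixes the explicit constants $\mathcal M_N$, $\mathcal R_N$, and $\mathcal B_{N,p}$ in Section \ref{SecIntegrality}.
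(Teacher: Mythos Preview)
Your overall decomposition strategy for Part (\ref{ThmInt_coeff}) is in the right spirit, but the execution has a gap, and Part (\ref{ThmInt_val}) rests on a false premise.

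For Part (\ref{ThmInt_coeff}), your plan to bound the $K$-rational piece $\beta_{\rho,n}$ by appealing to ``classical integrality properties of $q$-expansions of weight-$0$ harmonic Maass forms'' at an archimedean place does not work: the archimedean coefficients $a^\sigma_\rho(n)$ are transcendental complex numbers, so there is no archimedean integrality to invoke. The paper bypasses $\alpha^\frakp$ entirely. It uses the explicit coefficient formula (\ref{CoeffsLimitFormula}) coming from the limit construction $F^\frakp=\lim H^\frakp|_0 B_n$: since $H^\frakp$ has $\frakp$-integral $q$-expansion, one reads off directly that $\beta^{j+1}a^\frakp_\rho(mp^j)\in\O_\frakp(\beta)$ for $(m,p)=1$. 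The saving of $s$ is then elementary: $v_p(\beta)\le 1/2$, but $a^\frakp_\rho(n)$ lies in $K_\frakp$, so when $p$ is unramified in the Hecke field $K_N$ the valuation must be an integer and one gains a factor of roughly $p^{j/2}$. Your proposed route via theta lifts and Theorem \ref{ThmThetaConv} is unnecessary. The constant $\mathcal M_N$ does arise in passing from the $g$-isotypic case to the general case, but through an explicit change-of-basis matrix between the newform basis and a diagonalized integral basis of $S_2(N)$, not through Petersson norms.

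For Part (\ref{ThmInt_val}), the decomposition $F^\frakp(z)=\gamma_z+\alpha^\frakp\pi_z$ with $\gamma_z\in K$ does not exist for a general point $z\in Y_0(N)(K)$. Theorem \ref{ThmCorrespondence} supplies such a splitting only for $q$-expansion coefficients and for \emph{twisted traces} of CM values, never for individual values at arbitrary points. You also misread the hypothesis: when $\frakp\nmid N$ the point $z$ may be supersingular, so you cannot place it in the ordinary locus. The paper instead works directly with the defining sequence $F_n=H^\frakp|_0 B_n$ on the full integral locus: the estimate (\ref{ValDiff}) shows $F_n(z)\equiv F_1(z)$ modulo $\frakp$-integers (away from extreme ramification), and then (\ref{prodH}) shows $\beta F_1$ has $\frakp$-integral $q$-expansion, hence $\frakp$-integral values on the integral locus. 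Finally, $\mathcal B_{N,p}$ is not a convergence-rate artifact: it is exactly the denominator $(1-\beta^{-d})^{-1}$ built into the operator $B_n$ in (\ref{DefBn}), which is nontrivial precisely when $v_p(\beta)=0$, i.e.\ when $g$ is $p$-ordinary.
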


The bound on the denominators for evaluations is not sharp when $F^\frakp$ is weakly holomorphic, and may not be sharp in general. Improvements in this bound could be used to improve bounds for denominators appearing in algebraic coefficients of weight $1/2$ harmonic Maass forms.  

\begin{example}
Let $N=43$, $K=\Q$, and $\frakp=(3)$, and let $g$ be the unique newform for $\Gamma_0(43)$ with rational coefficients,
\[g=q -2q^2 - 2q^3 + 2q^4 -4q^5 + 4q^6 + q^9 + O(q^{10})
\]
There is a unique weight $0$ harmonic Maass form for $\Gamma_0(43)$ with the $q$-expansion at $\infty$ given by 
$$F^+(\tau)=q^{-1}+1.707216\dots q+1.792783\dots q^2+3.195188\dots q^3+ 
\dots,$$
which is invariant under the Fricke involution. Then 
$\D F=\mathcal F_\Q+\alpha_\C g$ where 
$$
\mathcal F_\Q=-q^{-1}+ 2 q+3q^2+9q^3+16q^4 +27q^5 +42q^6+ O(q^7)
\in M_2^!(43,\Q),
$$
and $\alpha_\C=-0.292783419\dots.$ 
The form $F^\frakp$ has the $q$-expansion
\begin{equation*}
\begin{split}
F^\frakp&=q^{-1}+\cdots212012_3 q+\cdots122110_3 q^2+\cdots102201_3 q^3 +O(q^4),
\end{split}
\end{equation*}
and satisfies $\D F^\frakp=\mathcal F_\Q+\alpha_\frakp g,$
where 
$\alpha_\frakp=\dots212010.1_3$. 
 Here we have represented each $3$-adic number in base $3$ 
  format so that, for instance, $$\alpha_\frakp=\dots212010_3=0\cdot 3^0+1\cdot 3^1+0\cdot 3^2+2\cdot 3^3+1\cdot 3^4+2\cdot 3^5+\dots.$$
  
Now let $\Delta=r=1.$ For $D$ and $h$ chosen as in Proposition \ref{PropTransValsInt}, the traces $\widetilde {\mathbf {t}}_N(\Delta,D,h)(F)$ are the coefficients of a weight $3/2$ vector valued harmonic Maass form $\vartheta_{\Delta,r}(F)$, as seen in Theorem \ref{ThmTraceCoeffs}. However for simplicity in this example we will consider a projection $\widehat{\vartheta_{\Delta,r}}(F)$ of this vector valued form to a scalar-valued form obtained by summing the vector components and multiplying by $-\frac12$.  Then ${\widehat{\vartheta_{\Delta,r}}(F)\in M^{!}_{3/2}(4\cdot 43;\C)}$, and lies in the Kohnen plus space.
The newform 
\[\widehat g=q^3+q^7-q^8-q^{12}+2q^{19}-q^{20}-2q^{27}-3q^{28}+O(q^{30})\in S_{3/2}(4\cdot43;\Q)\]
 maps to $f$ under the Shimura correspondence. 
We have $\widehat{\vartheta_{\Delta,r}}(F)=\widehat{\mathcal G_\Q}+a_\C ~\widehat g$,
where 
\[
\mathcal G_\Q=q^{-1}-1+q^7+q^8+q^{12}+q^{19}+q^{20}+2q^{27}+q^{28}+O(q^{30})\in M^!_{3/2}(4\cdot43;\Q), 
\]
and $a_\C=0.0663160686\dots$. 

The  traces $\widetilde {\mathbf {t}}_N(\Delta,D,h)(F^\frakp)$ are the coefficients of a $\frakp$-adic $q$-series ${\vartheta_{\Delta,r}}(F^\frakp)$ as seen in Theorem \ref{ThmThetaConv}. If $\widehat{\vartheta_{\Delta,r}}(F^\frakp)$ is the image under the same projection as above
, we find $\widehat{\vartheta_{\Delta,r}}(F^\frakp)=\widehat{\mathcal G_\Q}+a_\frakp ~\widehat g$,
where $a_\frakp=\dots00002.1_3$. 
Notice in this case we have a denominator of $3$. The constants
 in Theorem \ref{ThmIntegrality} are $\mathcal M_N=8$, $\mathcal R_N=2$ and $\mathcal B_{N,\frakp}=3,$ however since $F^\frakp\in H_0^g(43;\Q,\frakp),$ only $\mathcal B_{N,\frakp}$ contributes to the denominators.
\end{example}

The remainder of this paper will be organized as follows. In Section \ref{SecNotation} we review basic results about modular functions used throughout this paper. In Section \ref{SecMaass} we review the theory of harmonic Maass forms. Section \ref{SecConstruction} contains the construction of the $p$-adic harmonic Maass forms and the proofs of Theorem \ref{ThmPhmf1} 
 and Theorem \ref{ThmCorrespondence} parts (\ref{ThmCorr_Principal}),(\ref{ThmCorr_AlphaCoeff}), and (\ref{ThmCorr_HeckeEqui}).
 Section \ref{SecIP} contains additional results about the structure of the spaces of $\frakp$-adic harmonic Mass forms that we will find useful later. In Section \ref{SecIntegrality} we prove the integrality results for the $q$-series and values given in Theorem \ref{ThmIntegrality}. In Section \ref{SecHalfWt} we will review the theory of half-integer weight vector-valued modular forms and Hecke operators. In Section \ref{SecLifts} we will review the lifts connecting integral weight and half-integral weight forms and prove Proposition \ref{PropTransValsInt}. In Section \ref{SecPLifts} we extend the lifts studied in the previous section to the $\frakp$-adic harmonic Maass forms. 
Part (\ref{ThmCorr_AlphaTrace}) of Theorem \ref{ThmCorrespondence} will follow as a corollary to Theorem \ref{ThmThetaConv}. 

\section*{Acknowledgements}
This research was was supported by the National Science Foundation grant {DMS-1502390} and by  the European Research Council under the European Union's Seventh Framework Programme (FP/2007-2013) / ERC Grant agreement n. 335220 - AQSER. It was conducted during  postdoctoral work at Princeton University and at the Universit\"at zu K\"oln. 
 The author thanks these institutions for their support, with special thanks to his postdoctoral advisors Professor Shou Wu Zhang 
and Professor Kathrin Bringmann. 

The author is grateful to Claudia Alfes and Luca Candelori for comments and discussion on essential aspects of this work, and to Jonas Kaszian, Michael Mertens, Grant Molnar, and Michael Woodbury for their comments on earlier versions of this paper. The author would also like to thank the anonymous referee whose detailed comments have greatly improved the exposition of this paper.

\section{Modular functions}\label{SecNotation}

Throughout this paper, we will treat modular forms interchangeably as functions on elliptic curves, 
sections of line bundles over the modular curve, as formal $q$-series, 
 and in the complex case as functions in the complex variable $\tau$ in the upper half plane. We will treat harmonic Maass forms similarly.  

Given a model of an elliptic curve $E/\C$, let $\omega_1$ and $\omega_2$ be periods which generate the associated lattice, ordered so that $\tau_E:=\omega_1/\omega_2\in \H$. If $F$ is modular of weight $k$ and level $1$, then we have that
$$F(E)=F(\omega_1,\omega_2)=\left(\frac{2\pi \I}{\omega_2}\right)^{k} F(\tau_E).$$
If $F$ has level $N>0,$ then different choices of generators of the lattice may give different evaluations. 
A choice of level $N$ structure is a choice among the $\Gamma_0(N)$-equivalence classes of periods $\omega_1, \omega_2$ which generate the lattice. 

As usual any $2\times2$ rational matrix with positive determinant $\gamma=\sabcd\in \GL^+_2(\Q)$ acts on modular forms over $\C$ by 
\begin{eqnarray}
\nonumber F|_k\gamma\left(\omega_1,\omega_2\right)&:=&\operatorname{det}(\gamma)^{k/2} F\left( a \omega_1+b \omega_2 , c\omega_1+d\omega_2\right),\\
\label{EqnMatrixAction}
F|_k\gamma(\tau)&:=&(c\tau+d)^{-k} \operatorname{det}(\gamma)^{k/2} F\left(\frac{a \tau+b}{c\tau+d}\right).
\end{eqnarray}

Regardless of the field of definition of the forms under consideration, the matrix group $\GL^+_2(\Q)/\left(\Q\cdot \operatorname{I}_2\right)$ acts as an algebra of linear operators on modular forms, where the image of $\Gamma_0(N)$ acts trivially on level $N$ modular forms. This algebra contains both the Hecke algebra and the group of Atkin--Lehner involutions. 

Equivalently, we may consider the evaluation of modular forms algebraically. Let $\mathcal E(N;K)$ be the set of short Weierstraas models of elliptic curves over $K$ with a specified level $N$ structure. If $E\in \mathcal E(N;K)$, the evaluations of the Eisenstein series $E_4(E)$ and $E_6(E)$ can be read from the Weierstrass model. This suffices to evaluate any level $1$ meromorphic modular form. 
If $F$ is a modular function of level $N>1,$ then it is related to the $j$-function by a polynomial $\Phi_F(X,Y)\in K_F(Y)[X]$ for some field $K_F$, 
 defined by 
\begin{equation}\label{EqnModPoly}
\Phi_F(X,j):=\prod _{\gamma\in \SL_2(\Z)\backslash \Gamma_0(N)}\left( X-F|_0\gamma\right).
\end{equation}
Here the matrices $\gamma\in \SL_2(\Z)\backslash \Gamma_0(N)$ act by permuting the level $N$ structure of the input. Because the action 
by any matrix in $\SL_2(\Z)$ simply permutes the cosets, the coefficient functions must all be level $1$, and hence rational functions in $j$. Moreover, $\Phi_F(X,Y)$ 
 must be a perfect power of an irreducible polynomial. The level $N$ structure of $E$ then specifies an evaluation of $F(E)$ among the roots of $\Phi_F(X,j(E)).$

The geometry of the modular curve gives a more uniform characterization of the level $N$ structure.  The modular curve $Y_0(N)$ is a smooth affine curve over $\Q$, which satisfies the set bijection $\mathcal E(N;K)\bij K^\times\cdot Y_0(N)(K)$ and the isomorphism of Riemann surfaces $ Y_0(N)(\C)\simeq \Gamma_0(N)\backslash \H$ (See Theorem 13.1 of \cite{Silverman1}). 

We may fix a model 
$$Y_0(N)\simeq V\left(\Psi\right), \  \text{with } \ \Psi=\bigcup_i\{\Psi_i(\varphi_0,\varphi_1,\dots,\varphi_n)\},$$
so that the projection $(\varphi_0,\varphi_1,\dots,\varphi_n)\to (\varphi_0),$ gives the standard projection to $Y_0(1),$
each $\Psi_i(\varphi_0,{\varphi}_1,\dots,\varphi_n)\in \Z[\varphi_0,{\varphi}_1,\dots, \varphi_n],$ and each ${\varphi}_i$ satisfies a monic polynomial $\Phi_{{\varphi}_i}\in \Q[\varphi_0][X].$  
Modular functions on the curve can be represented as rational functions in the coordinates, with weakly holomorphic forms represented by polynomials:
$$M_0^!(N;K)\simeq K[\varphi_0,{\varphi}_1,\dots \varphi_n]/(\Psi).$$
 Each coordinate  ${\varphi}_i$ gives the value of an associated modular function $\tilde\varphi_i\in M_0^!(N;K).$ 
  Up to a linear change of variable 
  we may take $\tilde \varphi_0=j.$

If $K$ is a number field with ring of integers $\O$, we define the space of \emph{integral} modular functions
$$
M^!_0(N;\O):=\{F\in  M^!_0(N;K) \ : \ \Phi_F(X,j)\in \O[j][X]\}.
$$
It will be useful to fix a complete integral model of $Y_0(N)$, so that 
 $$M_0^!(N;\Z)\simeq \Z[\varphi_0,\varphi_1,\dots,\varphi_n]/(\Psi).$$
More generally, if $R$ is any fractional ideal of $K$ then we define the submodule $M_0^!(N;R)\subset M_0^!(N;K)$ by $M_0^!(N;R):=R\cdot M_0^!(N;\Z).$ 

We may impose topologies on the modular curve $Y_0(N)(K)$ in several natural ways. For instance, given an absolute value $|\cdot |_*$ on $K$, we may imposes a topology on $Y_0(N)(K)$ in terms of the distance between $j$-invariants of the points.
 We will make extensive use of certain important subsets $Y_0(N)(K)$ defined in terms of such topologies. If $\frakp$ is a prime of $K$, then
the \emph{$\frakp$-integral locus} is the set
$$\{z\in Y_0(N)(K) \ : \ |j(z)|_\frakp\leq1\}.$$ 
The $\frakp$-integral locus splits into two distinguished subsets: the \emph{supersingular locus}, 
$$
{SS_\frakp:=\left \{z\in Y_0(N)(K) \ : \ j(z) \pmod{\frakp}\text{ is supersingular } \right\},}
$$
 and the complement, the \emph{$\frakp$-ordinary locus}. 

\subsection{$q$-series and the Tate curve}
The $q$-expansion of a weakly holomorphic modular form $F$ corresponds to the evaluation of $F$ on a model of the Tate curve (see~\cite{Katz}). The various level $N$ models of the Tate curve correspond to the action of a matrix in $\SL_2(\Z)$ on a level $N$ modular form, and is related to the $q$-expansions of $F$ at the various cusps. 

The inequivalent cusps of $\Gamma_0(N)$ with $N$ square-free can be indexed by the divisors of $N$, with cusp $\{\frac ab\}$ indexed by $\delta=\frac{N}{\gcd(b,N)}$. The cusp $\infty$ then has index  $1$, while the cusp $0$ has index $N$. The Atkin--Lehner involutions $W_\delta$ permute these cusps. Here, $W_\delta$ can be represented by any integer matrix $\sm{\delta a}b{Nc}{\delta d}$ with determinant $\delta$. 
Then 
 $W_\delta$ swaps the cusp of index $D$ with that of index $\frac{D\delta}{(D,\delta)^2}$. 

We denote the standard $q$ series of a modular form $F$ at the cusp $\infty$ by $F(q)$. This corresponds to the model of the Tate curve $\textsc{Tate}_\infty(q)$ which satisfies 
\[
j_N(q)=j_N\left(\textsc{Tate}_\infty(q)\right)=j(q^N),
\]
where $j_N:= j|_0W_N$ is the image of $j$ under the Fricke involution $W_N:=\sm 0{-1}N0.$
In the complex case, this model corresponds to the usual Fourier expansion at $\infty$. 
More generally, each $\Gamma_0(N)$-model of the Tate curve $\textsc{Tate}_\infty(q)$ corresponds to the action of some right-coset representative  $[\gamma]\in \Gamma_0(N)\backslash \SL_2(\Z)$. The resulting action on the $q$-expansion of a modular form can be found by factoring  
\begin{equation}\label{EqnMatFactor}
\gamma=\abcd=W_\delta\zxz1j0\delta
\end{equation}
for some Atkin--Lehner involution $W_\delta$ with $\delta=\frac{N}{(N,c)}$, and $j\equiv d c^{-1} \pmod{\delta}$. Since $N$ is square-free in our case, $c$ and $\delta$ are co-prime, the factorization is well defined. 
 The action of an upper triangular matrix $\sm ab0d$ on a $q$-series is simply $q\to \zeta_{d}^bq^{a/d}$ with $\zeta_{d}$ a fixed primitive $d$-th root of $1$.
 Therefore 
\begin{equation}\label{EqnTate_gamma}
F\left(\textsc{Tate}_\gamma(q)\right)=F|_0W_\delta(\textsc{Tate}_\infty(\zeta_\delta^j q^{1/\delta})).
\end{equation}

The $q$-expansion principle (see~\cite{Katz}) allows us to use the $q$-expansion of a modular form $F\in M^!_k(N;K)$ to evaluate it at a curve $E\in \mathcal E(N;K)$. If $\nu$ is some place of $K$ such that $|j(E)|_\nu>1,$ then given any model $\textsc{T}(q)$ of the Tate curve, there are parameters $q_E$ and $\omega_E$ in $K_\nu$ with $|q_E|_\nu<1$ so that  
 \[
 F(E)=\omega_E^{k}F(\textsc{T}(q_E)).
 \]

Evaluating at the Tate curve shows that the the integral forms $M_0(N;\O)$ are exactly those level $N$ modular functions whose coefficients at all cusps are in $\O$.

\subsection{The Hecke algebra}\label{SecXHecke}
For our construction we will need an extension of the Hecke algebra
\[
\T_k^*(N;K)\subset \End_K(M^{!}_{k}(N;K)),
\] 
 generated by the standard Hecke operators $T_n$ for $(n,N)=1$, the Atkin $U_m$-operators for $m$ divisible only by primes dividing $N$
 , and the Atkin--Lehner involutions $W_\delta$ for $\delta \mid N$. For $N>1$ this is a non-commutative algebra. While operators with coprime index commute, the $U_{\ell^n}$ and $W_\ell$ operators for primes $\ell\mid N$ have non-trivial commutativity relations which can be worked out in terms of the action of matrices.
The $U_\ell$ operator satisfies 
\[F|_k \ell^{1-k/2}U_\ell=F\vert_{k}\sum_{j=0}^{\ell-1}\zxz{1}{j}{0}{\ell}.\]
The operators $(\ell^{1-k/2}U_\ell W_\ell)$ and $(W_\ell\ell^{1-k/2}U_\ell)$ both satisfy  the polynomial relation
$$x^2-(\ell-1)x-\ell=0.$$
The action of $(\ell^{1-k/2}U_\ell W_\ell)$ on a $q$-expansion is that of 
\begin{equation}\label{UW}
\ell U_\ell V_{\ell}+\ell^{k/2}W_\ell V_{\ell}-1,
\end{equation}
where as usual $V_\ell$ sends $q\to q^\ell.$ 



We will find it useful to define the operators 
\[\hatT_n:=\begin{cases}T_{n'} \ W_{\delta}U_{D}W_\delta &\text{ if } k\geq 2\\
n^{1-k}T_{n'} \ W_{\delta}U_{D}W_\delta &\text{ if } k\leq 0\end{cases}\]
where $\delta=(n,N)$, $n'$ is the greatest divisor of $n$ with $(n',N)=1,$ and $Dn'=n.$ 
A short exercise then shows that the weight $k$ and weight $2-k$ operators satisfy the same multiplicative relation
\begin{equation}\label{EqnHeckeMult}
\hatT_n\hatT_m=\sum_{\substack{d\mid (m,n)\\(d,N)=1}}d^{|k-1|}\hatT_{\frac{mn}{d^2}}.
\end{equation}
 In particular we have an isomorphism 
 $$\varphi_{k}:\TT^*_k(N;K)\xrightarrow[]{\sim} \TT^*_{2-k}(N;K).$$

The normalizations for the non-positive weight operators 
also preserve integrality of $q$-expansions. If $(n,N)=1$, this follows easily from the formula 
 in terms of the $U$ and $V$ operators,
\begin{equation}\label{EqnTHatCoP}
F|_k \hatT_n=F|_k \sum_{d\mid n}D^{|k-1|}U_dV_{\frac nd}.
\end{equation}
where $D=d$ or $\frac nd$ depending on whether $k\leq 0$ or $\geq 2$ respectively.
If $\ell\mid N$, 
 then the action on $q$-expansions of $\hatT_\ell$ can be worked out using (\ref{UW}). We find
\begin{equation}\label{EqnTHat}
F|_k W_\ell\ell^{1-k/2}U_\ell W_\ell=F|_k\left(\ell W_\ell U_\ell V_\ell-W_\ell+ \ell^{k/2}V_\ell\right).
\end{equation}

The normalizations also allow simpler commutativity relations with the modular differential operators $\D_{k-1}$ and $\xi_{2-k}$ defined in the next section. 
 

%
%

\section{Harmonic Maass forms}\label{SecMaass}
In this section we define harmonic Maass forms and lay out certain key properties that will be used later. 
 We begin by recalling the definition of harmonic Maass forms of weight $k\in 2\Z.$ 
 Here we set $\tau=x+\operatorname{i}y$ with $x$ and $y$ real, $y>0$, and set  $q=\operatorname{e}^{2\pi \operatorname{i} \tau}$. The weight $k$ hyperbolic Laplacian is defined by
\begin{equation*}
\Delta_k := -y^2\left( \frac{\partial^2}{\partial x^2} +
\frac{\partial^2}{\partial y^2}\right) + \operatorname{i}ky\left(
\frac{\partial}{\partial x}+\operatorname{i} \frac{\partial}{\partial y}\right).
\end{equation*}
\begin{definition}\label{DefHMF}
Let $\Gamma=\Gamma_0(N)$ for some 
$N$,  and let $ k\in 2\Z$. Then a real analytic function $F(\tau):\H\to \C$  is a {\it harmonic Maass form}  of weight $k$ for $\Gamma$ if:

\begin{enumerate}
\item  The function $F(\tau)$ is invariant under the slash operator so that 
$$F|_k \gamma =F$$ for every matrix $\gamma\in \Gamma.$
\item The function $F$ is harmonic so that $\Delta_kF=0;$
\item The function $F$ has a meromorphic \emph{principal part} at each cusp. That is, if $F_\rho$ is the expansion of $F$ at $\rho$, then
 there is some polynomial $P_\rho(q^{-1})\in\C[q^{-1}]$ and constant $C_\rho>0$ so that $F_\rho-P_\rho(q^{-1}) = O(e^{-C_\rho y})$ as $y\to \infty.$ 
  \label{MeroPP}
\end{enumerate}
\end{definition}
We denote the $\C$ vector space of weight $k$ harmonic Maass forms for $\Gamma_0(N)$ by $H_k(N;\C)$. The differential equation given by $\Delta_kF=0$ implies that harmonic Maass forms have Fourier expansions which split into two components: one part which is a holomorphic $q$-series, and one part which is non-holomorphic. 

\begin{lemma}[{Proposition 3.2 of \cite{BruinierFunke}}]\label{HMFparts}
Let $F(\tau)$ be a harmonic Maass form of weight $2-k<1$ for $\Gamma_0(N)$ as defined above. Then 
we have that 
\begin{displaymath}
F(\tau)=F^+(\tau)+F^{-}(\tau)
\end{displaymath}
where $F^+$ is the holomorphic part of $F$ or \emph{mock modular form},
 given by 
$$
F^+(\tau):=\sum_{n\gg -\infty} c_F^+(n) q^{n},
$$
and $F^{-}$ is the non-holomorphic part given by
$$
F^{-}(\tau) := \sum_{n \geq 1} c_{F}^-(n) \Gamma(k-1,4\pi y n) q^{-n}.
$$
\end{lemma}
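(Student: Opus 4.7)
The plan is to proceed by separation of variables. Since $\sm{1}{1}{0}{1}\in\Gamma_0(N)$, modular invariance gives $F(\tau+1)=F(\tau)$, so $F$ admits a Fourier expansion
\[
F(\tau)=\sum_{n\in\Z} a_n(y)\,e^{2\pi i n x}
\]
in the real variable $x$, with coefficients $a_n$ that are smooth functions of $y>0$. The harmonicity condition $\Delta_{2-k}F=0$ then translates into a second-order linear ODE for each $a_n(y)$ individually. A direct computation of $\Delta_{2-k}$ applied to the $n$-th Fourier mode gives
\[
y^2 a_n''(y)+(2-k)y\,a_n'(y)+\bigl(2\pi n(2-k)y-4\pi^2 n^2 y^2\bigr)a_n(y)=0.
\]

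Next I would solve this ODE on a mode-by-mode basis. For $n\neq 0$, one verifies by direct substitution that $a_n(y)=e^{-2\pi n y}$ is a solution, which yields the holomorphic contribution $q^n$. A second linearly independent solution may be found by reduction of order (or by recognising the equation, after the substitution $u=4\pi|n|y$, as a confluent hypergeometric equation) and equals, up to normalisation, $\Gamma(k-1,4\pi|n|y)\,e^{2\pi n y}$. For $n>0$ this second solution grows like a power of $y$ as $y\to 0$ but behaves tamely at infinity; for $n<0$ it has the shape of the asserted non-holomorphic mode $\Gamma(k-1,4\pi y|n|)q^{n}$. The case $n=0$ is a pure Euler equation whose solutions are $1$ and $y^{k-1}$.

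Finally I would use condition (\ref{MeroPP}) in Definition \ref{DefHMF} to kill the unwanted solutions. The principal part $P_\infty(q^{-1})$ is a polynomial in $q^{-1}$, and the requirement $F-P_\infty(q^{-1})=O(e^{-Cy})$ as $y\to\infty$ forces exponential decay of the residual Fourier coefficients. Comparing the two solutions above mode-by-mode: for $n\geq 1$ only the holomorphic $q^n$ decays exponentially, so the second solution must drop out, while for $n\leq -1$ the holomorphic piece $q^n$ is exponentially growing and gets absorbed into $P_\infty(q^{-1})$, leaving only the $\Gamma(k-1,4\pi y|n|)q^n$ piece (with coefficient renamed $c_F^-(|n|)$ and index replaced by $-n$). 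For $n=0$ the solution $y^{k-1}$ grows like a polynomial in $y$, hence is incompatible with $O(e^{-Cy})$ and must vanish, while the constant $1$ contributes to $c_F^+(0)$. Repackaging these observations gives the claimed decomposition $F=F^++F^-$ with the stated shape.

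The main obstacle is the ODE analysis: once one verifies that $e^{-2\pi n y}$ is one solution, identifying the second solution explicitly as $\Gamma(k-1,4\pi|n|y)e^{2\pi n y}$ requires either recognising the equation as a confluent hypergeometric equation (in which case one quotes known formulae) or carrying out reduction of order and integrating carefully. The rest of the proof is essentially bookkeeping using the growth condition at the cusps.
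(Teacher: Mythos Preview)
The paper does not supply its own proof of this lemma; it is simply quoted from Bruinier--Funke. Your separation-of-variables argument is precisely the standard proof, and the ODE you derive is correct.

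There is one concrete slip. You assert that the second solution of the mode-$n$ ODE is $\Gamma(k-1,4\pi|n|y)\,e^{2\pi n y}$ uniformly in the sign of $n$. A direct substitution shows that $\Gamma(k-1,4\pi m y)\,e^{2\pi m y}$ (with $m>0$) satisfies the ODE for Fourier index $-m$, not $+m$; the two equations differ in the sign of the linear-in-$y$ term and do not share this solution. For $n>0$ the genuine second solution, via your own reduction-of-order recipe, is $e^{-2\pi n y}\int y^{k-2}e^{4\pi n y}\,dy$, which behaves like $y^{k-2}e^{2\pi n y}$ as $y\to\infty$ --- exponentially growing, not ``tame'' as you first say. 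This growth is exactly what you need: it is what lets the cusp condition~(\ref{MeroPP}) kill the second solution for $n\geq 1$, as you (correctly) claim two sentences later. So the logic of the elimination step is sound once the asymptotics are computed correctly; only the explicit formula you wrote down for positive $n$ is wrong. (There is also a sign typo for $n<0$: the coefficient function should be $\Gamma(k-1,4\pi|n|y)\,e^{-2\pi n y}$, which is consistent with your later expression $\Gamma(k-1,4\pi|n|y)q^n$ and with the $q^{-n}$ in the statement.)
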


\subsection{Differential operators and the Petersson inner product}\label{SecMaass_Diff}
Differential operators yield some important relations between spaces of harmonic Maass forms and weakly holomorphic modular forms of dual weight. Let $k\geq2$ be an even integer, and define the operators 
\begin{align}\label{DefDXi}
\D^{k-1}&:=\left(\frac{1}{2\pi i} \frac{\partial}{\partial \tau}\right)^{k-1}
&\text{and}&&   \xi_{k}&:=2iy^{k}\overline {\frac{\partial}{\partial \overline \tau}}.
\end{align}
These maps yield the exact sequences
\begin{align*} 
0\to M_{2-k}(N;\C) \hookrightarrow&H_{2-k}(N;\C)\xlongrightarrow[]{\D^{k-1}}S^\perp_k(N;\C)\to 0, \\
0\to M^!_{2-k}(N;\C)\hookrightarrow &H_{2-k}(N;\C)\xlongrightarrow[]{\xi_{2-k}}S_k(N;\C)\to 0 
\end{align*}
(See Corollary 3.8 of \cite{BruinierFunke}).
 Here, the space 
 $S_k^{\perp}(N;\C)$ is a distinguished subspace of $M^{!}_k(N;\C)$ consisting of those forms with vanishing constant term at all cusps and which are orthogonal to the cusp forms $S_k(N;\C)$ with respect to the regularized Petersson inner product described below.
 
 The $\D^{k-1}$ operator preserves integrality of coefficients, and so extends to a map 
 \[
 \D^{k-1}:M^!_{2-k}(N;K)\xlongrightarrow[]{\D^{k-1}}M^!_k(N;K).
 \]
The operators $\hatT_n$ commute with these differential operators. If $k\geq 2$ and $F\in H_{2-k}^{!}(N)$, then 
\begin{align}
\D^{k-1}\left(F|_{2-k}\hatT_n\right)&=\left.\left(\D^{k-1}F\right)\right \vert_{k}\hatT_n,  \label{EqnDHecke}\\
\xi_{2-k}\left(F|_{2-k}\hatT_n\right)&=\left.\left(\xi_{2-k}F\right)\right\vert_{k}\hatT_n.\label{EqnXiHecke}
\end{align}
 The same relations hold for the Atkin-Lehner involutions $W_\delta$.

The Petersson inner product $\langle\cdot,\cdot\rangle_k:M_k^!(N;\C)\times M_k(N;\C)\to \C$ is defined by the regularized integral
\begin{equation}\label{PIP}
\langle f,g\rangle_k:= [\SL_2(\Z):\Gamma_0(N)]^{-1}
\int_{\Gamma_0(N) \backslash \H}^{\operatorname{Reg}} f(\tau)\overline{g(\tau)}y^{k}\frac{dxdy}{y^2}.
\end{equation}
Borcherds' regularization of the inner product (see \cite{Bo1}) allows the inner product to make sense even in cases where we have growth towards the cusps. The normalization by the group index ensures that the inner product is independent of the level.

Bruinier--Funke~\cite{BruinierFunke} define a pairing  $\{\cdot,\cdot\}:S_{k}(N;\C)\times H_{2-k}(N;\C)\to \C$ connected to the inner product, defined by
\begin{equation}\label{DefPairing}
 \{g,F\} := \langle g,\xi_{2-k}F\rangle_k.
 \end{equation}
This pairing, and therefore the resulting inner product, can be computed in terms of the coefficients of harmonic Maass forms. 

\begin{theorem}[Proposition 3.5 of \cite{BruinierFunke}]\label{PairingFormula}
Let $F\in H_{2-k}^!(N;\C)$ and $g\in S_k(N;\C),$ and for each $D|N$, let 
 $F|_kW_{D}(\tau)=\sum_{n}a^+_D(n)q^n +F_D^-$ and $g|_k W_D(\tau)=\sum_nb_D(n)q^n.$  Then 
 \[
\{g,F\}= [\SL_2(\Z):\Gamma_0(N)]^{-1}
\sum_{D\mid N}\sum_{n\in \Z} a^+_D(-n)\cdot b_D(n).
\]
\end{theorem}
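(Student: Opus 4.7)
My plan is to convert the pairing into a boundary integral via Stokes' theorem. Starting from
\[
\{g,F\} = [\SL_2(\Z):\Gamma_0(N)]^{-1}\int^{\operatorname{Reg}}_{\Gamma_0(N)\backslash\H} g(\tau)\overline{\xi_{2-k}F(\tau)}\, y^{k}\,\frac{dx\,dy}{y^2},
\]
I would use the identity $\xi_{2-k}F = 2iy^{2-k}\,\overline{\partial F/\partial\bar\tau}$ to rewrite the integrand, together with $dx\wedge dy = \tfrac{i}{2}\,d\tau\wedge d\bar\tau$, to obtain
\[
g(\tau)\overline{\xi_{2-k}F(\tau)}\, y^{k}\,\frac{dx\,dy}{y^2} = -d\bigl(g(\tau)F(\tau)\,d\tau\bigr),
\]
where holomorphicity of $g$ is used to kill the $\partial g$ contribution. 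The pairing becomes an exact form, so Stokes' theorem applies.

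Next, I would apply Stokes to a truncated fundamental domain $\mathcal F_T \subset \Gamma_0(N)\backslash\H$ obtained by cutting off a small horocyclic neighborhood of each cusp at height $T$ (after choosing scaling matrices). Interior sides of the fundamental domain cancel in pairs by $\Gamma_0(N)$-invariance of $gF\,d\tau$, leaving only the horocycle contributions at each cusp. Since $N$ is squarefree, the cusps are parametrized by divisors $D\mid N$ via the Atkin--Lehner involutions $W_D$, with each $W_D$ providing a scaling matrix sending the cusp of index $D$ to $\infty$. The boundary integral at the $D$-th cusp then takes the shape
\[
\int_{0}^{1} (g F)\big|_{\!k}\,W_{D}(x+iT)\,dx,
\]
up to accounting for the width (which is $1$ in the squarefree case, so no auxiliary factor intrudes).

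Now I expand using the $q$-expansions $g|_kW_D = \sum_n b_D(n)q^n$ and $F|_{2-k}W_D = F^+_D + F^-_D$ with $F^+_D = \sum_n a^+_D(n)q^n$ and $F^-_D = \sum_{n\geq 1} c_D^-(n)\Gamma(k-1,4\pi n y)q^{-n}$. The horocycle integral extracts, via orthogonality of $e^{2\pi i m x}$, the sum over matched frequencies. The holomorphic-holomorphic pairing gives exactly $\sum_n a^+_D(-n)b_D(n)$, which is $T$-independent (this is the term that survives the limit and produces the claimed formula). The $gF^-_D$ contribution, after integration in $x$, leaves terms of the form $b_D(n)\overline{c_D^-(n)}\Gamma(k-1,4\pi nT)$ which decay as $T\to\infty$. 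Summing over all cusps $D\mid N$ and dividing by $[\SL_2(\Z):\Gamma_0(N)]$ yields the stated identity.

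The main obstacle is managing the regularization carefully: one must verify that the principal parts of $F$ at each cusp, which make the original integral divergent, are precisely compensated by the regularization prescription used in~(\ref{PIP}), so that taking $T\to\infty$ through the truncation is legitimate and produces the constant-in-$T$ term without stray contributions. A secondary point is bookkeeping the factor $[\SL_2(\Z):\Gamma_0(N)]^{-1}$ against the sum over cusps of $\Gamma_0(N)$ when the scaling matrix at each cusp is the Atkin--Lehner $W_D$ (which for squarefree $N$ is a bijection between cusps and divisors with unit width), so that no extra combinatorial factor appears in the final formula.
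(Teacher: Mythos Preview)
Your approach is correct and is precisely the standard Stokes-theorem argument of Bruinier--Funke that the paper cites; the paper does not supply its own proof, only the remark that the pairing equals the sum of constant terms at cusps of the weight~$2$ form $F\cdot g$, together with the observation that the Atkin--Lehner formulation is equivalent to the coset formulation via the factorization in~(\ref{EqnMatFactor}). Your identification of the integrand as $-d(gF\,d\tau)$, the truncation, the vanishing of the $F^-$ contribution as $T\to\infty$, and the extraction of the constant term are all standard and sound; the only bookkeeping point worth noting is that since the $W_D$ normalize $\Gamma_0(N)$, the effective width at every cusp is $1$ in these coordinates, which is why no extra width factors appear, and the coset reformulation in the paper explains how the $[\SL_2(\Z):\Gamma_0(N)]^{-1}$ normalization is consistent with summing only over $D\mid N$.
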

 
The pairing is a sum of the constant terms of the non-holomorphic weight $2$ modular forms $(F\cdot g)|W_D$. 
The formula presented here differs slightly from Bruinier and Funke's original statement which is given in terms of vector valued forms. The formula for the pairing is more easily recognized as a sum over cosets,
 \[
\{g,F\}=
[\SL_2(\Z): \Gamma_0(N)]^{-1}
 \sum_{\gamma \in \Gamma_0(N)\backslash \SL_2(\Z)} \sum_{n\in \Z} a^+_\gamma(-n)\cdot b_\gamma(n).
\]
Where $a^+_\gamma(n)$ and $b_\gamma(n)$ are coefficients of $F|_{2-k}\gamma$ and $g|_k\gamma$ respectively. The two definitions are equivalent, as can be seen by factoring the coset representatives as in equation (\ref{EqnMatFactor}). 

 An easy corollary of this theorem is that if $F\in H^{!}_{2-k}(N;\C)$ with $\xi_{2-k}F\neq 0$, then $F$ has a singularity at some cusp since $\langle\xi_{2-k}F,\xi_{2-k}F\rangle_k\neq 0$. 
Bruinier and Funke also show that given a Hecke eigenform $g\in S_k(N;\C)$ with coefficients in a field $K$, then there exists a harmonic Maass form $G\in H_{2-k}^!(N;\C)$ with $\xi_{2-k}G=\frac{g}{\langle g,g\rangle_k}$ whose principal parts at all cusps are defined over $K$.

\begin{proposition}\label{PropCGen}
Let $k\geq 2.$ The space of harmonic Maass forms $H_{2-k}(N;\C)$ is generated by the extended Hecke algebra $\TT_{2-k}^*(N;\C)$ acting on a single element.
\end{proposition}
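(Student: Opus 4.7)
The plan is to produce a single $F_0 \in H_{2-k}(N;\C)$ whose Hecke orbit $\TT^*_{2-k}(N;\C)\cdot F_0$ exhausts $H_{2-k}(N;\C)$. Since a harmonic Maass form is determined by its principal parts at all cusps together with its image under $\xi_{2-k}$ up to a holomorphic modular form of weight $2-k$ (which is zero for $k>2$ and just constants for $k=2$), it is enough to produce enough elements in the orbit to realize every prescribed principal part and every prescribed $\xi_{2-k}$-image.

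For the choice of $F_0$: by the existence results underlying Bruinier--Funke, the joint map sending $F \in H_{2-k}(N;\C)$ to its (principal part, $\xi_{2-k}$-image) is surjective onto (formal principal parts) $\oplus\, S_k(N;\C)$, since any obstruction to realizing a prescribed principal part in $M^!_{2-k}$ is absorbed by the non-holomorphic completion. I take $F_0$ with principal part $q^{-1}$ at the cusp $\infty$, vanishing principal parts at all other cusps, and with $\xi_{2-k}F_0 = g_0$ chosen to be a cyclic generator of $S_k(N;\C)$ over $\TT^*_k(N;\C)$. Such a $g_0$ exists: for square-free $N$, the space $S_k(N;\C)$ is semisimple on newform components under the commutative subalgebra generated by $T_n$ with $(n,N)=1$, and the operators $U_\ell, W_\ell$ for $\ell\mid N$ separate the oldform copies, so $\TT^*_k(N;\C)$ acts as a product of matrix algebras and any element with nonzero projection onto each simple factor generates. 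By the commutativity relation~(\ref{EqnDifHecke}) we have $\xi_{2-k}(F_0|T) = g_0|T$, so $\TT^*_{2-k}(N;\C)\cdot F_0$ already surjects onto $S_k(N;\C)$ under $\xi_{2-k}$.

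To handle principal parts, I use the formulas (\ref{EqnTHatCoP}), (\ref{EqnTHat}), and (\ref{UW}): a short computation shows that $F_0|\hatT_n$ has principal part at $\infty$ with leading term $n^{k-1}q^{-n}$. By a triangular change of basis, varying $n$ produces orbit elements realizing arbitrary principal parts at $\infty$; applying the Atkin--Lehner involutions $W_\delta$, which permute the cusps transitively for square-free $N$, moves these poles to give arbitrary principal parts at every cusp. Combined with the $\xi$-surjectivity above, every pair (principal part, $\xi_{2-k}$-image) occurs in the joint image of $\TT^*_{2-k}(N;\C)\cdot F_0$, so the orbit equals $H_{2-k}(N;\C)$ modulo $M_{2-k}(N;\C)$. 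For $k>2$ the latter is zero; for $k=2$, the constant function arises from nonzero constant terms in $F_0|\hatT_n$ at appropriate cusps.

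The main obstacle, requiring the most care, is that a priori the principal part and the $\xi_{2-k}$-image of $F_0|T$ are not independently adjustable --- both are linked through $T$. The decoupling proceeds as follows: once cyclicity of $g_0$ gives $\xi$-surjectivity on the orbit, any prescribed $\xi_{2-k}$-image is realized; to then adjust the principal part without disturbing the $\xi_{2-k}$-image, one restricts to operators $T$ with $g_0|T = 0$, which by (\ref{EqnDifHecke}) send $F_0$ into $M^!_{2-k}(N;\C)$. That these operators, acting on $F_0$, produce all of $M^!_{2-k}(N;\C)$ is controlled by the pairing formula of Theorem~\ref{PairingFormula}: the only constraint on principal parts of weakly holomorphic forms in $M^!_{2-k}(N;\C)$ is orthogonality to $S_k(N;\C)$ under the Bruinier--Funke pairing, and this is precisely the constraint that the annihilator of $g_0$ enforces.
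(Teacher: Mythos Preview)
Your core idea --- use $\hatT_n$ and $W_\delta$ acting on a single form with principal part $q^{-1}$ at $\infty$ to realize arbitrary principal parts at all cusps --- is exactly the paper's approach. But you have overcomplicated things by trying to control the $\xi_{2-k}$-image separately. The simplification you are missing is the corollary to Theorem~\ref{PairingFormula}: any $F \in H_{2-k}(N;\C)$ with trivial principal parts at every cusp satisfies $\langle \xi_{2-k}F, \xi_{2-k}F\rangle_k = \{\xi_{2-k}F, F\} = 0$, hence $\xi_{2-k}F = 0$ and $F$ is holomorphic. Thus matching principal parts alone already forces the $\xi$-images to match; there is no need to choose $\xi_{2-k}F_0$ to be a cyclic generator of $S_k$, and no decoupling is required. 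The paper simply takes any $P_*$ with a simple pole at $\infty$, matches principal parts, and invokes this corollary. (Minor point: under the paper's normalization, $P_*|_{2-k}\hatT_n = q^{-n} + O(1)$ with leading coefficient $1$, not $n^{k-1}$; this does not affect your triangular argument.)

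Your final ``decoupling'' paragraph, besides being unnecessary, contains a genuine gap. You assert that the operators $T$ annihilating $g_0$, applied to $F_0$, produce \emph{all} of $M^!_{2-k}(N;\C)$, on the grounds that the only constraint on principal parts of weakly holomorphic forms is orthogonality to $S_k$. But knowing which principal parts are \emph{permitted} does not establish that the specific map $\mathrm{Ann}(g_0) \to M^!_{2-k}$, $T \mapsto F_0|T$, is surjective; this needs its own argument. Finally, your treatment of constants for $k=2$ is too vague: the paper handles this explicitly by applying the trace operator $\operatorname{Tr}'_\ell = W_\ell + \ell^{1-k/2} U_\ell$ for a prime $\ell \mid N$ to $P_*$, observing the result has no poles and hence is constant, and replacing $P_*$ by $P_* + 1$ if that constant happens to vanish.
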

\begin{proof}
There exists a harmonic Maass form in $H_{2-k}(N;\C)$ with a simple pole with leading coefficient $1$ at infinity, and with no other singularities. Call this form $P_*$. This form can be constructed by means of Maass Poincar\'e series \cite{Niebur,BringmannOnoMaassPoincare} or abstractly using the surjection $\D:H_0(N;\C)\onto S^{\perp}_2(N;\C),$ noting that $ S^{\perp}_2(N;\C)$ contains a form with such a principal part. 

As shown in section \ref{SecXHecke}, the operator $\hatT_n$ acts on a $q$-series so that 
$$P_*|_{2-k}\hatT_n =q^{-n}+O(1)$$
and will introduce no other poles. In particular, suppose $F\in H_{2-k}(N;\C)$ and has a $q$-expansion at each cusp given given by 
$$
F|_{2-k}W_\delta(\tau)=\sum_{n<0}a_\delta(n)q^n +O(1).
$$
Then the form
$$ F':= P_*|_{2-k}\sum_{\substack{n<0\\ \delta\mid N}} a_\delta(n)\hatT_nW_\delta
$$
will have the same singularities. Then $F-F'$ is a harmonic Maass form which is bounded at all cusps, and whose non-holomorphic part is $0$. Thus, by the remark following Theorem \ref{PairingFormula}, $F-F'$ is a holomorphic modular form of weight $2-k$. 

All that remains is to show that we can obtain the constant functions if $2-k=0$. Pick $\ell$ a prime divisor of $N$. We have two trace operators $\operatorname{Tr}_\ell$ and $\operatorname{Tr}_\ell'$ defined by
\begin{eqnarray}\label{EqnTrace1}
\operatorname{Tr}_\ell&:=&\sum_{\gamma\in \Gamma_0(N)\backslash \Gamma_0(\frac N\ell)} \gamma \ =1+\ell^{1-k/2}W_\ell U_\ell\\
\label{EqnTrace2}
\operatorname{Tr}_\ell'&:=&W_\ell\operatorname{Tr}_\ell =W_\ell+\ell^{1-k/2} U_\ell.
\end{eqnarray}
     
If $F$ is modular on $\Gamma_0(N\ell)$, then the image under either of these trace operators is modular on $\Gamma_0(N)$. Considering the $q$-expansion, we find that $P_*|_0\operatorname{Tr}_\ell'W_D(q)$ has no singularities, 
 and must be a constant. If it is non-zero, we are finished. Otherwise, replace $P_*$ with $P_*+1$. A short exercise shows that $\operatorname{Tr}_\ell$ acts on constants by multiplication by $\ell+1.$ 
\end{proof}

\subsection{More General harmonic Maass forms}\label{SecGenHMFS}
Recall that condition (\ref{MeroPP}) of \ref{DefHMF} allows only meromorphic singularities at cusps. In their original definition, Bruinier and Funke use a broader condition: 
\begin{itemize}
\item[(3*)] The function $F$ exhibits at most linear exponential growth at each cusp, so that if $F_\rho$ is the expansion of $F$ at $\rho$, then there is some constant $C_\rho\in \R$ satisfying $F_\rho = O(e^{C_\rho y})$ as $y\to \infty.$ 
\label{Sings}
\end{itemize}
 This broader definition allows for harmonic Maass forms with non-holomorphic principal parts. We denote this larger space by $\mathbb H_k(N;\C)$, however unless otherwise specified, by \emph{harmonic Maass form} we mean a form in $H_k(N;\C)$. 

The operators $\D^{k-1}$ and $\xi_{2-k}$ both map $\mathcal H_{2-k}(N;\C)$ on to the full space $M^!_k(N;\C).$ In particular, $\mathbb H_{2-k}(N;\C)$ contains forms whose holomorphic parts are \emph{cuspidal mock modular forms}, in that their images under $\D^{k-1}$ are cusp forms. We will come back to this point briefly in the next section.

\section{$\frakp$-adic harmonic Maass forms}\label{SecConstruction}

As we have seen in the previous section, the non-holomorphic parts of complex harmonic Maass forms are intimately connected to the differential operators $\D^{k-1}$ and $\xi_{2-k}.$ In particular, the derivative $\D$ annihilates the non-holomorphic part of weight $0$ harmonic Maass functions. On the $p$-adic integers, the Teichm\"uller character $\omega_p$ given by the limit of $p$-th powers
\begin{equation}\label{DefOmegap}
\omega_p(x):=\lim_{n\to \infty }x^{p^{n!}}
\end{equation}
 has a similar property. It converges $p$-adically for $|x|_p\leq 1$ to a function which is locally constant (and hence has derivative $0$), but which is not globally constant. The function $\omega_p(j)$ is an example of a $p$-adic function, defined on at least part of the modular curve which is not holomorphic and which has vanishing derivative. It also has a $q$-expansion of sorts. Although not convergent in $\Q((q)),$ the sequence of $q$-series $j(q)^{p^n}$ converges coefficient-wise to a constant term ($\equiv 744 \pmod p$). This convergence of the $q$-expansion hints at a way to use the Tate curve to extend the function $\omega_p(j)$ towards the cusps. Such an extension, of course does not make sense without some kind of regularization. This idea will be in the background of our constructions of $\frakp$-adic harmonic Maass forms. We will, however, make use of the Hecke operators $\hatT_p$ rather than explicit  polynomials of modular functions. 

  \begin{lemma}\label{HeckeLimit}\label{VanishingHecke}
Suppose $\frakp$ is a prime of $K$ not dividing $N$, and $f\in M^!_{0}(N;\O_\frakp).$
$$f_{n}=f |_{0} \hatT_{p^n},$$
 then the following are true:
\begin{enumerate}
\item For each divisor $\delta \mid N$ we have the $q$-series congruences 
\[\D f_n|_0W_\delta(q)\equiv 0\pmod{p^n}.\]
In particular, the $q$-expansions of $f_n$ at each cusp converge coefficient-wise to constant terms. If $C_\delta$ is the constant term of $f|_0W_\delta(q),$ then the constant term of $f_n|_0W_\delta(q)$ is congruent to $\frac{C_\delta}{1-p}\pmod{p^n}.$

 \item Suppose the completion $K_\frakp$ of $K$ at $\frakp$ is a finite Galois extension of $\Q_p$ with ramification degree $e$ and residue field $\mathbb F_{p^d},$ and let $c_e$ be the constant given in (\ref{EqRamConst}). 
Fix a complete integral model for $Y_0(N)(K)$ as in section \ref{SecNotation}.  Then there are locally constant functions $\hat f_{n}$  defined on the $\frakp$-integral locus which satisfy
 \[
v_p( f_{n}(z)-\hat f_{n}(z))\geq {n+c_e},
 \]
 whose indices depend only on the congruence class $n\pmod d$, and whose values depend only on the residues of the coordinates of $z \pmod \frakp$. 
\label{HL_ValLimits}
\end{enumerate}
\end{lemma}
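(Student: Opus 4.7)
For part (1), I would directly expand $\hatT_{p^n}$ in weight $0$ using (\ref{EqnTHatCoP}), which for $k=0$ and $p\nmid N$ gives
\[
f_n \;=\; \sum_{j=0}^{n} p^{j}\, f\,|_0\, U_{p^j}V_{p^{n-j}}.
\]
Writing $f(q)=\sum_m a_m q^m$, the coefficient of $q^k$ in $f|_0 U_{p^j}V_{p^{n-j}}$ is $a_{p^j k/p^{n-j}}$ when $p^{n-j}\mid k$ and $0$ otherwise. Setting $k=p^{n-j}k'$, each contribution to the coefficient of $q^k$ in $\mathcal D f_n$ equals $k\cdot p^j a_{p^j k'}=p^n k' a_{p^j k'}\in p^n\mathcal O_\frakp$. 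This proves $\mathcal D f_n(q)\equiv 0\pmod{p^n}$ at $\infty$. For an arbitrary cusp of index $\delta\mid N$, the hypothesis $p\nmid N$ guarantees that $\hatT_{p^n}$ commutes with $W_\delta$, so $f_n|_0 W_\delta = (f|_0W_\delta)|_0\hatT_{p^n}$; since $f|_0W_\delta\in M_0^!(N;\mathcal O_\frakp)$ has integral $q$-expansion, the same argument applies. For the constant term, taking $k=0$ gives $\sum_{j=0}^{n}p^j C_\delta = C_\delta\,(p^{n+1}-1)/(p-1)\equiv C_\delta/(1-p)\pmod{p^{n+1}}$, which is stronger than the claimed congruence modulo $p^n$.

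For part (2), the strategy is to promote the $q$-expansion congruence of part (1) to a pointwise congruence on the $\frakp$-integral locus, decomposed into residue disks indexed by points of $Y_0(N)(\mathbb F_{p^d})$. On residue disks meeting a cuspidal neighborhood, the Tate curve parametrization provides a rigid analytic uniformizer in which part (1) shows $f_n-C_\delta/(1-p)\in p^n q\,\mathcal O_\frakp[[q]]$; integrating this power series and accounting for the loss in the ramified extension $K_\frakp/\Q_p$ gives the required bound, which is exactly the content of the constant $c_e$ in (\ref{EqRamConst}). On residue disks in the ordinary locus away from cusps, I would use the Serre--Tate deformation parameter as a local coordinate: with respect to this parameter $\mathcal D$ acts as the logarithmic derivative, $V_p$ corresponds to the canonical Frobenius lift, and an analogue of part (1) in the Serre--Tate expansion yields the same congruence. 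The cyclic dependence on $n\pmod d$ is forced because this Frobenius lift reduces to the absolute Frobenius of $\mathbb F_{p^d}$, which has order $d$.

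The supersingular locus is a finite union of residue disks on which there is no canonical Frobenius lift, so I would proceed differently: the Hecke correspondence $\hatT_p$ reduces modulo $p$ to a combination of the Frobenius isogeny and its transpose on the finite set of supersingular $j$-invariants, which lie in $\mathbb F_{p^2}$. Iterating and combining with the $q$-expansion congruence from part (1) (transported to supersingular disks via chains of Hecke correspondences that connect them to ordinary regions where $q$-expansions apply) gives the congruence and the $n\pmod d$ periodicity. The main obstacle is ensuring uniformity of the bound $n+c_e$ across ordinary disks (where Serre--Tate theory and Tate-curve expansions apply cleanly) and supersingular disks (where they do not), and pinning down exactly how $c_e$ arises as the integration loss in $K_\frakp/\Q_p$; the existence of the complete integral model fixed in Section \ref{SecNotation} is crucial here, since it lets one express ``locally constant on each residue disk'' as ``constant on each fiber of reduction of the coordinates $\varphi_i$ modulo $\frakp$.''
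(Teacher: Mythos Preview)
Your argument for part (1) is correct and essentially the same as the paper's; the paper phrases the key step as the congruence $f|_0\hatT_{p^n}(q)\equiv f|_0\hatT_{p^{n-1}}(q^p)\pmod{p^n}$ followed by induction, but this is equivalent to your direct coefficient count.

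For part (2), however, your approach diverges sharply from the paper's and has real gaps. The paper's proof is entirely elementary and uniform over the whole $\frakp$-integral locus, with no case split between ordinary and supersingular disks and no Serre--Tate theory. Writing $f=r_0(j,\tilde\varphi_1,\dots,\tilde\varphi_M)$ as an integral polynomial in the coordinates of the fixed integral model, the $q$-expansion congruence from part (1) (together with $j(q^p)\equiv j(q)^p\pmod p$, etc.) gives $f|_0\hatT_p\equiv r_0(j^p,\tilde\varphi_1^p,\dots)\pmod p$, and iterating yields integral polynomials $r_i$ with
\[
f_n=\sum_{i=0}^n p^i\, r_i\bigl(j^{p^{n-i}},\tilde\varphi_1^{p^{n-i}},\dots,\tilde\varphi_M^{p^{n-i}}\bigr).
\]
The locally constant approximant $\hat f_n$ is then obtained by replacing each coordinate value $t$ by its Teichm\"uller representative $\omega_p(t)$ inside this expression, and the bound $v_p(f_n(z)-\hat f_n(z))\geq n+c_e$ comes from the binomial estimate for $t^{p^s}-\omega_p(t)^{p^s}$: minimizing $v_p\!\left(\binom{p^s}{i}b^i\right)\geq s-v_p(i)+i/e$ over $i$ yields exactly the constant $c_e$ of (\ref{EqRamConst}). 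The $d$-periodicity is immediate since $\omega_p$ takes values in the $(p^d-1)$-st roots of unity.

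By contrast, your plan introduces machinery that is not needed and does not obviously close. The assertion that on cuspidal disks ``$f_n-C_\delta/(1-p)\in p^n q\,\mathcal O_\frakp[[q]]$'' is false as stated: coefficients at indices divisible by high powers of $p$ need not lie in $p^n\mathcal O_\frakp$ (part (1) only controls $\mathcal D f_n$, not $f_n$ itself). There is no ``integration'' step, and $c_e$ is not an integration loss but the binomial-expansion bound above. Finally, the supersingular case in your sketch is genuinely incomplete: transporting congruences through Hecke correspondences to ordinary regions is not a proof, whereas the paper's polynomial-in-coordinates argument handles supersingular points with no extra work.
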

The lemma extends naturally to the algebraic closure $\overline \Q_p$ or to $\C_p$. To extend part (2) we need only note that modulo a fixed $p$-adic valuation, the curve $E$ and the form $f$ must both be defined over a common finite extension of $\Q_p$. 

We will use this lemma repeatedly throughout our constructions, but will postpone its proof until section \ref{SecProofs}. The construction uses two regularizations for the convergence of sequences of modular forms. 
The regularizations used basically allow us to say that a sequence of modular forms whose values converge on the $\frakp$-integral locus and whose $q$-expansions at cusps converge to a form with finite principal parts should extend to some kind of form defined everywhere, even if the orders of the poles at cusps increase without bound. The first regularization in weight $2$ is fairly straightforward. The second regularization for weight $0$ relies on the Lemma and is more delicate as we work with sequences of forms whose $q$-expansions converge only coefficient-wise.

The construction of the space $H_0(N;K,\frakp)$ will begin with the construction of corresponding forms $F^\frakp$ for forms $F^\sigma\in H^g_0(N;K,\sigma),$ 
as in Theorem \ref{ThmCorrespondence}. After the constructions, we will prove the properties listed in Theorems \ref{ThmPhmf1} and 
\ref{ThmCorrespondence}, however many of these follow immediately from the construction.
The construction is uniform for each $F^\sigma\in H^g_0(N;K,\sigma),$ but does depend importantly on the newform $g$. It consists of sequences of operators $A_n$ in the Hecke algebra so that 
\begin{enumerate}
\item The forms $F^\sigma|_0A_n$ are weakly holomorphic and have coefficients in $K$.
\item The sequence of derivatives $(\D F^\sigma|_0A_n)^\frakp_n$ converges under the first regularization discussed in the next subsection to a form in $M_2^!(N;K_\frakp)$ with the same principal part at all cusp as $\D F^\sigma.$ 
\item The sequence of forms $(F^\sigma|_0A_n)^\frakp$ converges under the second regularization.
\item If $F^\sigma$ is weakly holomorphic, then the sequence of forms $ (F^\sigma|_0A_n)^\frakp$ converges to $F^\frakp.$
\end{enumerate}
The action of Hecke operators, Atkin--Lehner involutions and the derivative $\D$ commute with the operators $A_n$ without affecting convergence. 

It turns out that not only do such sequences of operators exists, but assuming their convergence properties arise from Lemma \ref{HeckeLimit}, the limits are almost unique. If the $p$-th coefficient of $g$ is divisible by $\frakp,$ then differences between limits may fall into a $1$ dimensional space spanned by a form whose derivative under $\D$ is a multiple of the newform $g$. Among these functions there is a natural choice for $F^\frakp\in H^g_0(N;K_\frakp).$ The cuspidal form can be viewed as an analog of a more general harmonic Maass form of the type discussed in section \ref{SecGenHMFS}. This form can be obtained from $F^\frakp$ by means of another limit of Hecke operators. This realizes the $p$-adic coupling between mock modular forms and their shadow as studied by Guerzhoy--Kent--Ono as an operation on $\frakp$-adic harmonic Maass forms, albeit using the $T_p$-Hecke operators rather than the $U_p$-operators. 

The construction for the cuspidal forms in general have worse convergence properties than do the forms in the space $H_0(N;K_\frakp)$. In particular, the limit fails to converge when the $p$-th coefficient is not divisible by $\frakp$--at least not by means of the regularizations considered here. The $q$-series do converge coefficient-wise, but we do not have convergence as functions on $\mathcal E(N;K_\frakp)$.

\subsection{Regularized convergence for $\frakp$-adic limits of modular forms}
\subsubsection{First regularization}\label{SecReg1} The first regularization for sequences of modular forms that we need is given below. 
The regularization also holds for a larger space of forms which may have poles in the supersingular locus, which we will use later. In this case, however, we will only concern ourselves with sequences which converge to weakly holomorphic functions. We define $\mathcal M^{p}_{k}(N;K_\frakp)$ to be the space of meromorphic modular forms of level $N$ over the field $K_\frakp$ with poles allowed at cusps and in the supersingular locus. 
  
  In the following lemma, we say a sequence of $q$-series, $f_m=\sum_{n\in \Z} a_m(n) q^n$ converges uniformly $\frakp$-adically if the $p$-adic limit
  \[
\lim_{m_1,m_2\to \infty}\left(\inf_{n\in \Z} v_\frakp\left(a_{m_1}(n)-a_{m_2}(n)\right)\right)\to \infty.
  \]
\begin{lemma}[First regularized convergence]
\label{RegCon1}
Suppose $(F_n)_{n\in \N}$ is a sequence of modular forms of $M^{!}_{k}(N;K_\frakp)$ with $k\geq 2$ whose  $q$-series at each cusp converge uniformly $\frakp$-adically 
 to a $q$-series with a bounded orders of poles. Then the limit 
\[F_\infty(q):=\lim _{n\to \infty}F_n(q)
\]
is the $q$-expansion for some  $F_\infty\in M^{!}_{k}(N;K_\frakp).$ Restricted to the $\frakp$-integral locus, we have uniform convergence as functions 
\[\lim_{n\to \infty}F_n(E) \to F_\infty(E).\]
 Moreover, there exists a sequence $(G_n)_{n\in \N}\subset M^{!}_{k}(N;K_\frakp)$ of modular functions which satisfies the following properties:
\begin{enumerate}
\item The maximum order of the poles at all cusps of the sequence $(F_n-G_n)_n$ is bounded as $n$ increases. 
\item The sequence of $q$-series $(G_n(q))_{n\in \N}$ converges uniformly to $0,$ and so the sequence $(F_n(q)-G_n(q))_{n\in \N}$ converges uniformly to $F_\infty(q).$
\item The sequence $(F_n(E)-G_n(E))_{n\in \N}$ converges to $F_\infty(E)$ uniformly on compact regions of $E\in\mathcal E(N;K)$, not including any cusp. 
\end{enumerate}

Given such a sequence $(F_n)_{n\in \N}$, we say that the sequence converges to $F_\infty$. 
\end{lemma}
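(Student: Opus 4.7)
The plan is to first construct the limit $F_\infty \in M^!_k(N;K_\frakp)$ directly from the $q$-expansions, and then simply take $G_n := F_n - F_\infty$. With this choice, the asserted property (1) is immediate since $F_n - G_n = F_\infty$ has constant pole order; property (3) is trivial (the sequence is constant in $n$); and property (2) follows from the uniform convergence of the $q$-expansions by hypothesis. The real content is therefore (a) realizing $F_\infty$ as an honest weakly holomorphic modular form from only coefficient-wise $q$-series limits, and (b) upgrading uniform $q$-series convergence to uniform convergence of values on the $\frakp$-integral locus.

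For (a), by the assumed uniform $\frakp$-adic convergence and the bounded pole-order hypothesis, the coefficient-wise limit $F_\infty(q) := \lim_n F_n(q)$ is a well-defined formal Laurent series at each cusp with principal part of order at most some fixed $M$. To realize this system as the $q$-expansion of a form in $M^!_k(N;K_\frakp)$, I would use the algebraic analog of the Bruinier--Funke pairing in Theorem \ref{PairingFormula} (equivalently, Serre duality on the compactified curve $X_0(N)_{K_\frakp}$): a formal principal part at the cusps is realized by a weakly holomorphic form of weight $k \geq 2$ if and only if it pairs to zero with every cusp form $g \in S_k(N;\mathcal O_\frakp)$. Since each $F_n$ realizes its own principal part, this pairing vanishes for $F_n$. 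Splitting the pairing into a head indexed by orders $1 \leq m \leq M$ and a tail with $m > M$, as $n \to \infty$ the head converges termwise to the corresponding pairing with $F_\infty$'s principal part, while the tail tends to $0$ because the $\frakp$-adic valuations of the $q^{-m}$-coefficients of $F_n$ tend to $\infty$ uniformly in $m > M$ (their limits vanish there), and the cusp form coefficients $a_g(m)$ lie in $\mathcal O_\frakp$. Hence the principal part of $F_\infty(q)$ pairs trivially with every cusp form and so is realized by some $\tilde F \in M^!_k(N;K_\frakp)$; adjusting $\tilde F$ by an element of the finite-dimensional space $M_k(N;K_\frakp)$ produces $F_\infty$ with the prescribed $q$-expansion at $\infty$, which by the $q$-expansion principle matches $F_\infty(q)$ at every cusp.

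With $F_\infty$ in hand I set $G_n := F_n - F_\infty$, so properties (1)--(3) follow as above, and it remains to deduce uniform convergence of $F_n$ to $F_\infty$ on the $\frakp$-integral locus. That locus is cut out by $|j|_\frakp \leq 1$ and is disjoint from every cuspidal disk (where $|j|_\frakp > 1$ via the Tate curve), so each $G_n$ is regular there. I would then invoke the $\frakp$-adic $q$-expansion principle: after trivializing the weight-$k$ bundle on the ordinary locus by multiplication by a suitable power of an Eisenstein series such as $E_{p-1}$ that is congruent to $1$ modulo $\frakp$, Katz's integrality theorem bounds the values of $G_n$ on the $\frakp$-integral locus in terms of the uniform valuation of its $q$-series coefficients. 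The main technical hurdle is step (a): the Bruinier--Funke pairing is originally analytic, and one needs its algebraic reformulation over $K_\frakp$ together with the fact that the pairing commutes with termwise $\frakp$-adic limits, which is exactly where uniform $\frakp$-adic convergence of the $q$-series (as opposed to mere pointwise convergence of each coefficient) is essential.
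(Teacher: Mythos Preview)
Your high-level plan---construct $F_\infty$ first and set $G_n := F_n - F_\infty$---would indeed trivialize (1)--(3), and your argument for (b) via the integral $q$-expansion principle is essentially what the paper leaves implicit. The gap is in step (a). First, the pairing is miscast: the Bruinier--Funke pairing of Theorem~\ref{PairingFormula} couples $S_k$ with $H_{2-k}$, not with weight-$k$ principal parts; the Serre-duality obstruction to realizing a system of principal parts by an element of $M^!_k$ lives in weight $2-k$, which is zero for $k\ge 2$, so the pairing argument is superfluous. More seriously, ``adjusting $\tilde F$ by an element of $M_k(N;K_\frakp)$ produces $F_\infty$ with the prescribed $q$-expansion at $\infty$'' begs the question: a holomorphic correction can match only $\dim M_k$ many coefficients, and matching the entire series presupposes that the limit is already the expansion of a form. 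The $q$-expansion principle says a form is determined by its expansion at one cusp, but it does not tell you which series in $K_\frakp((q))$ arise this way, so invoking it to force agreement at the remaining cusps is a non-sequitur.

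The paper avoids this by reversing the order. For $k\ge 2$ there is an integral basis containing forms $f_{\delta,m}$ with expansion $q^{-m}+O(1)$ at the cusp $\delta$ and $O(1)$ at the others. One takes $G_n$ to be the combination of the $f_{\delta,m}$ with $m>B$ that kills all poles of $F_n$ of order exceeding $B$. Then $F_n-G_n$ lies in the finite-dimensional space of forms with pole order at most $B$, where uniform $q$-series convergence upgrades to convergence of the form itself, and the limit is by definition $F_\infty$. Since the coefficients appearing in $G_n$ are precisely the high-order principal-part coefficients of $F_n$, which tend uniformly to $0$, and the $f_{\delta,m}$ are integral, one gets $G_n(q)\to 0$ uniformly and (by integrality on the integral locus, as in your (b)) also $G_n(E)\to 0$ there. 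Your outline is therefore salvageable, but the construction of $F_\infty$ has to go through this integral-basis / finite-dimensionality reduction rather than through the pairing-plus-adjustment route you describe.
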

A similar result also holds for a sequence $(F_n)_{n\in \N}\subset \mathcal M^{p}_{k}(N;K_\frakp),$ with a few modifications. First, for our purposes we will take it as a hypothesis that $F_\infty(q)$ is the $q$-expansion of a form in  $M^!_k(N;K_\frakp).$ Secondly, the forms $(F_n-G_n)_{n\in \N}$ will generally only converge outside the supersingular locus. 

\begin{proof}
We begin in the case $(F_n)_{n\in \N}\subset M^{!}_{k}(N;K_\frakp).$
If $k\geq 2$, then $M^!_k(N;K_\frakp)$ has an integral basis including forms $f_{d,m}$ for $m>0$ with $q$-expansions at cusps of the shape
\[
\left(f_{\delta,m}|_kW_D\right)(q)=\begin{cases} q^{-m}+O(1) &\text{ if } D=\delta\\
O(1) &\text{ otherwise.}
\end{cases}
\] 
Let $B$ be the order of the pole of $F_\infty.$ Due to the existence of the integral basis, we may construct forms $G_n$ so that each term in the sequence $(F_n-G_n)$ has poles with orders no greater than $B$. Moreover, we can do so using the basis elements $f_{\delta,m}$ with $m$ strictly larger than $B$. The convergence of the functions $F_n(q)$ imply that the principal parts of the $G_m$ go to $0$ and so the full $q$-expansions of the $G_m(q)$ functions converge to zero. Thus the sequence $(F_n(q)-G_n(q))_{n\in \N}$ converges to the same limit $F_\infty(q)$. Since we have bounded the order of poles, the forms converge in $M^!_k(N;K_\frakp)$, uniformly on compact regions not containing the cusps.

For $(F_n)_{n\in \N}\subset \mathcal M^{p}_{k}(N;K_\frakp)$ we proceed similarly. Using the integral basis we may construct forms $G_n$ so that $(F_n-G_n)$ converges to zero outside the supersingular locus. The space $\mathcal M^{p}_k(N;K_\frakp)$ is not closed under this regularized convergence as the weakly holomorphic forms are. The closure of this space includes all \emph{weakly holomorphic $p$-adic modular forms}. We will only need the lemma for sequences which converge to something weakly holomorphic.

Note that 
we can act on $F_\infty$ by the Hecke operators and Atkin--Lehner involutions by acting on $(F_n)_{n\in \N}$  term-wise without affecting convergence.
\end{proof} 

\subsubsection{Second regularization.}\label{SecReg2}
The second regularization for sequences of modular forms we need 
 is given below. 

\begin{lemma}[Second regularized convergence]
\label{RegCon2}
Suppose $(F_n)_{n\in\N}$ is a sequence of modular functions of  $M^{!}_{0}(N;K)$ which converges on the $\frakp$-integral locus, has a convergent constant term in the $q$-expansion at each cusp, and whose derivatives $(\D F_n)_{n\in \N}$ converge under the first regularized convergence to some function in $M^!_2(N;K_\frakp).$ 
Moreover, suppose there exists a sequence $(G_n)_{n\in \N}$ of modular functions of $\mathcal M^{p}_{0}(N;K_\frakp)$ which regularize convergence towards the cusps, satisfying the following properties:
\begin{enumerate}
\item The sequence $(G_n)_{n\in \N}$ converges uniformly to $0$ on the $p$-ordinary locus. 
\label{2reg1}
\item The maximum order of the poles at all cusps of $(F_n-G_n)_{n\in \N}$ is bounded as $n$ goes to infinity. \label{2reg2}
\item The sequence of derivative forms $(\D G_n)_{n\in \N}$ converges to $0$ outside the supersingular locus under the first regularized convergence.\label{DGn}
\label{2reg3}
\item The constant terms of the $q$-expansions of $G_n$ at all cusps converge to $0$.\label{GConst}
\label{2reg4}
\end{enumerate}
Then the function  
\[ F_\infty(E):= \begin{cases} \displaystyle\lim_{n\to \infty} F_n(E) & \text{ if } E \text{ is within the integral locus,}\\
\displaystyle\lim_{n\to \infty} \left(F_n-G_n\right)(E) & \text{ if } E \text{ is outside the supersingular locus,}
\end{cases}\]
is well defined on all of $\mathcal E(N;K),$ and is independent of the possible choices of sequences $(G_n)_{n\in \N}$ satisfying the properties above. Moreover $F_\infty(E)$ has a well-defined $q$-expansion at each cusp. 
\end{lemma}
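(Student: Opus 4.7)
The plan is to define $F_\infty$ piecewise as stated and then verify three things: (i) the second-piece limit $\lim_n(F_n-G_n)(E)$ actually exists on the complement of $SS_\frakp$; (ii) the two defining formulas agree on the overlap, which is the $\frakp$-ordinary locus; and (iii) this second-piece limit is independent of the auxiliary regularizer $(G_n)$. The existence of a $q$-expansion of $F_\infty$ at each cusp will fall out of (i) together with Lemma \ref{RegCon1}.

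For (i), I would apply the $\mathcal M^p$-version of first regularized convergence (Lemma \ref{RegCon1}) to the sequence $(F_n-G_n)\subset \mathcal M^p_0(N;K_\frakp)$. Condition (\ref{2reg2}) supplies the uniform bound on pole orders at cusps, so what remains is to check that the $q$-expansion $(F_n-G_n)|_0 W_\delta(q)$ converges uniformly $\frakp$-adically at every cusp $\delta\mid N$. The constant term converges by the standing hypothesis on $F_n$ combined with condition (\ref{GConst}). For the coefficient of $q^m$ with $m\neq 0$, I would invoke the identity $\D(H|_0 W_\delta)=(\D H)|_2 W_\delta$ valid for weight--$0$ functions $H$ (a direct consequence of the chain rule together with $\det W_\delta=\delta$), which lets me recover the $m$-th coefficient at the cusp $\delta$ as $\tfrac{1}{m}$ times the $m$-th coefficient of $(\D F_n - \D G_n)|_2 W_\delta(q)$. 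The hypothesis on $(\D F_n)$ gives uniform $\frakp$-adic convergence of $(\D F_n)|_2 W_\delta(q)$ to the $q$-expansion of a form in $M_2^!(N;K_\frakp)$, while condition (\ref{DGn}) forces $(\D G_n)|_2 W_\delta(q)\to 0$ uniformly. Dividing by $m$ preserves convergence, so every non-constant coefficient converges; combined with the bounded pole orders, the $\mathcal M^p$-version of Lemma \ref{RegCon1} produces $F_\infty^{\mathrm{out}}$ on the complement of $SS_\frakp$ with the predicted $q$-expansion.

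For (ii), on the $\frakp$-ordinary locus condition (\ref{2reg1}) forces $G_n\to 0$ uniformly, so $(F_n-G_n)(E)$ and $F_n(E)$ share the same pointwise limit, and the two pieces glue to a single $\frakp$-adically continuous function on $\mathcal E(N;K)$. For (iii), given another admissible regularizer $(G_n')$, set $H_n:=G_n-G_n'$. Each of the four conditions passes to $H_n$ with right-hand side zero, and the pole-order bound for $H_n$ at cusps is inherited from the bounds for $F_n-G_n$ and $F_n-G_n'$. Rerunning the argument of the previous paragraph on $(H_n)$ produces a first-regularized limit $H_\infty\in M_0^!(N;K_\frakp)$ whose $q$-expansion at every cusp vanishes; by the $q$-expansion principle, $H_\infty\equiv 0$, and therefore $\lim_n(F_n-G_n)=\lim_n(F_n-G_n')$ on the complement of $SS_\frakp$. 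The main obstacle I expect is the bookkeeping in (i): one must reconstruct uniform $q$-series convergence of a weight--$0$ sequence from convergence of its derivative plus convergence of the constant term, while working in $\mathcal M^p_0$ rather than $M^!_0$. The slash commutation $\D(H|_0W_\delta)=(\D H)|_2 W_\delta$ and the pole-order bound are exactly what make the $\mathcal M^p$-version of Lemma \ref{RegCon1} applicable, and any weakening of condition (\ref{GConst}) would at best determine $F_\infty$ only up to an additive locally constant function on the ordinary locus.
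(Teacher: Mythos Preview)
Your overall architecture --- check that $\lim_n(F_n-G_n)$ exists off $SS_\frakp$, glue on the ordinary locus, then show independence of the regularizer --- matches the paper. The gap is in the execution of step (i): you cannot invoke Lemma~\ref{RegCon1} (or its $\mathcal M^p$-variant) for the weight-$0$ sequence $(F_n-G_n)$. That lemma is stated for $k\geq 2$ and, more importantly, requires \emph{uniform} $\frakp$-adic convergence of the $q$-series. Recovering the $m$-th coefficient of $(F_n-G_n)|_0W_\delta$ from that of $(\D F_n-\D G_n)|_2W_\delta$ costs a factor of $1/m$, and the valuation loss $v_p(m)$ is unbounded in $m$; so from uniform convergence of the derivative you obtain only \emph{coefficient-wise} convergence of the antiderivative. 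Moreover the limit $q$-series need not be (and generically is not) the expansion of a weakly holomorphic form --- it is the $q$-expansion of a genuine $\frakp$-adic harmonic Maass form --- whereas that is an explicit hypothesis in the $\mathcal M^p$-version of Lemma~\ref{RegCon1}. The same defect then recurs in your step (iii), where you claim $H_\infty\in M^!_0(N;K_\frakp)$.

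The paper bypasses Lemma~\ref{RegCon1} for the weight-$0$ sequence and argues directly via the Tate curve. For $E$ with $|j(E)|_\frakp>1$, write $E=\textsc{Tate}(q_E)$ with $|q_E|_\frakp<1$. Although the coefficients of $(F_n-G_n)(q)$ have unbounded denominators, the uniform convergence of $\D(F_n-G_n)(q)$ bounds the $m$-th coefficient by a fixed negative power of $p$ divided by $m$; since $|q_E^m/m|_\frakp\to 0$, the evaluation $\sum_m a(m)q_E^m$ converges and one can pass to the limit in $n$. For independence the same direct estimate works: $(G_n-G_n')$ has bounded pole orders and $q$-series converging coefficient-wise to $0$, hence converges to $0$ on every model of the Tate curve, while condition~(\ref{2reg1}) handles the ordinary locus. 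Your step (ii) is correct and is exactly what the paper does on the overlap.
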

As in the first regularization, we can act on $F_\infty$ by the Hecke operators and Atkin--Lehner involutions by acting on $(F_n)_{n\in \N}$  term-wise without affecting convergence.

If the sequence $(F_n)_{n\in \N}$ satisfies the theorem then we say it converges to $F_\infty$.
Notice in this case the existence of the sequence $(G_n)_{n\in \N}$ is a hypothesis of the lemma rather than a result as in the first regularization.  It is not hard to show that a sequence $(G_n)_{n\in \N}$ must exist which satisfies conditions (\ref{2reg2}), (\ref{2reg3}), and (\ref{2reg4}). That such a sequence also satisfies condition (\ref{2reg1}) implies certain constraints on the initial sequence $(F_n)_{n\in\N}$. 


\begin{proof} 
The $q$-expansion of $F_\infty$ at any cusp can be found, up to the constant term, by finding an anti-derivative of the limit of 
\[
\displaystyle\lim_{n\to \infty}\D F_n(q)=\displaystyle\lim_{n\to \infty}\D(F_n-G_n)(q).
\]
Coefficient-wise, we have convergence without the derivatives,
\[
\displaystyle\lim_{n\to \infty} F_n(q)=\displaystyle\lim_{n\to \infty}(F_n-G_n)(q).
\]

Given the $q$-expansion, we may use the Tate curve 
 to extend $F_\infty$ towards the cusps. The coefficients of $(F_n-G_n)$ may not have denominators bounded uniformly for all $n$, however the condition that $\D F_n$ converges restricts the denominators sufficiently that the value using the Tate curve will converge for any $q$ with $|q|_\frakp<1$. 
  If $(G_n)_{n\in \N}$ and $(G'_n)_{n\in \N}$ are any two such sequences satisfying the conditions, then conditions (\ref{2reg2}) and (\ref{2reg3}) imply that their sequence of differences $(G_n-G'_n)_n$ has bounded order poles and the $q$-series at each cusp converges coefficient-wise to $0$. Hence the sequence converges to $0$ on every model of the Tate-curve. Similarly, condition (\ref{2reg1}) implies that the differences must converge to $0$ on the ordinary locus. 
\end{proof}
  
\subsection{Construction of $H_0(N;K_\frakp)$}\label{SubSecConstruction}
We begin with the assumptions of Theorem \ref{ThmCorrespondence}. That is, let $g$ be a newform of level $N$ in $S_2(N;K)$ with $q$-expansion given by $g(q)=\sum_{n\geq 1} a_g(n)q^{n}$, and suppose $\frakp$ be a prime of $K$ not dividing $N$. Moreover, let $\sigma$ an Archimedean place of $K$, and $F^\sigma\in H^g_0(N;K,\sigma)$. Without loss of generality, assume that the principal parts of $F^\sigma$ at all cusps are $\frakp$-integral.

Let $\beta =\beta_g$ and $\overline\beta=\overline\beta_g$ be the roots 
 of the polynomial
\begin{equation}\label{BetaPoly}
x^2-a_g(p) x+p.
\end{equation}
Here we distinguish the roots so that $v_p(\beta)\leq v_p(\overline {\beta})$. Since the coefficients of $g$ are eigenvalues for Hecke operators, the Hecke relations imply that the $p$-th power coefficients of $g$ satisfy
 $$a_g(p^n)=\frac{\beta^{n+1}-\overline{\beta}^{n+1}}{\beta-\overline{\beta}}.$$
Note that so long as $v_p(\beta)< v_p(\overline {\beta})$, we have that $\beta\in K_\frakp.$ Otherwise, if the valuations are equal, much of the work of this section must take place over the field extension $K_\frakp(\beta)$. In this case, we will need a few extra steps at the end of the construction to assure everything is defined over $K_\frakp$ rather than the extension.

Consider the functions 
\[
F^\sigma|_0\left(1-\frac{\hatT_{p^n}}{a_g(p^n)}\right).
\]
These functions are all weakly holomorphic since the corresponding weight $2$ operators annihilate $g$ (applying, for instance (\ref{EqnXiHecke})). Since the principal parts of these functions are in $K$, all the coefficients are, and so these correspond to a family of functions in $M^{!}_0(N;K_\frakp).$
By Lemma \ref{HeckeLimit}, the $q$-series of these functions converge  $\frakp$-adically coefficient-wise. If $F^\sigma$ is holomorphic, then the lemma shows that coefficients converge $\frakp$-adically to those of $F^\sigma$--except for possibly the constant terms. Similarly, their values on the $\frakp$-integral locus do not generally converge, but instead oscillate near multiple limit points. This family of functions serves as a prototype for our purposes, but we must modify it to get solid convergence rather than oscillation.

Let $H^\sigma:=F^\sigma|_0(a_g(p)-\hatT_p)$ which is weakly holomorphic, and whose coefficients are in $K$ and are $\frakp$-integral since this is true for the principal parts. This being the case, it corresponds to a function $H^\frakp\in M^{!}_0(N;K_\frakp).$
As in Lemma \ref{HeckeLimit}, let $e$ be the ramification degree of $K_\frakp,$ and $d$ be the degree of the residue field. 

We will construct the $\frakp$-adic function $F^\frakp$ using limits of Hecke operators. 
For $n\geq0$, let  $B_{n}$ be the operator in $\TT^*_0(N;K(\beta))$ defined by
\begin{equation}\label{DefBn}
B_{n}:=\sum_{j=0}^{n-1} \beta^{-j-1} \hatT_{p^j}+ \sum_{j=0}^{d-1}\frac{\beta^{-n-j-1} \hatT_{p^{n+j}}}{1-\beta^{-d}}, 
\end{equation}
and let $\overline B_{n}$ be defined similarly, replacing $\beta$ with $\overline \beta.$ 

\begin{proposition}\label{PropFpConv}
Assume the notation above. Then the following are true.
\begin{enumerate}
\item the functions $F_{n}:=H^\frakp |_0B_{n}$ converge under the second regularization. 
\item If $v_p(\overline\beta)<1,$ the functions $\overline F_{n}:=H^\frakp |_0\overline B_{n}$ also converge under the second regularization.
\item If $F^\sigma$ is weakly holomorphic, the limits described above both give the corresponding $\frakp$-adic function $F^\frakp \in M^!_0(N;K,\frakp).$
\end{enumerate}
\end{proposition}
As can be seen from the proof, if $v_p(\overline\beta)<1/2,$ then $F_n$ will converge more quickly than will $\overline F_n.$ Moreover, Hensel's lemma applied to the polynomial $x^2-x+\frac{p}{a_g(p)^2}$ (which has roots $\frac{\beta}{a_g(p)}$ and $\frac{\overline \beta}{a_g(p)}$) shows that  $\beta\in K_\frakp,$ and so each of the forms $F_n\in M^!_0(N;K,\frakp).$
When $v_p(\beta)=v_p(\overline\beta)=1/2,$ the distinction between $\beta$ and $\overline\beta$ is arbitrary. Moreover, Hansel's lemma may no longer apply and so $\beta$ may not lift to an element in $K_\frakp$ and $F_n$ may not live in $M^!_0(N;K,\frakp).$ However $\tfrac12(F_n+\overline F_n)$ will live in $(M^!_0(N;K,\frakp).$ 

In light of these observations, if $F^\sigma$ is not weakly holomorphic, it is natural to define $F^\frakp$ in terms of these limits as follows.

\begin{definition}\label{Def_Fp}
Assume the notation above. If $F^\sigma$ is not weakly holomorphic, define
\[
F^\frakp :=\begin{cases}\lim_{n\to \infty} F_n & \text{ if } v_p(\beta)<1/2,\\ 
\lim_{n\to \infty} \tfrac12(F_n +\overline F_n)& \text{ if } v_p(\beta)=1/2.\\
\end{cases}
\]
\end{definition}

\begin{proof}[Proof of Proposition~\ref{PropFpConv}]

The multiplication rule (\ref{EqnHeckeMult}) implies
\[
\hatT_{p^n}\hatT_p=\hatT_{p^{n+1}}+p\hatT_{p^{n-1}}.
\]
Using this and telescoping the resulting sums, we find 
\begin{equation}\label{prodH}
\begin{split}
A_n&:=(a_g(p)-\hatT_p)B_{n}\\
& \ =1+\frac{\beta^{-n-d}}{1-\beta^{-d}}
\left( -\overline\beta(\hatT_{p^{n-1}}-\hatT_{p^{n+d-1}})
+(\hatT_{p^{n}}-\hatT_{p^{n+d}})\right).
\end{split}
\end{equation}
The difference between sequential $B_n$ is 
\begin{equation}\label{DiffBn}
B_{n+1}-B_{n}= \beta^{-n-d-1}\frac{ \hatT_{p^{n+d}}-\hatT_{p^{n}}}{1-\beta^{-d}}.
\end{equation}

Applying Lemma \ref{HeckeLimit}, we see that the $q$-series $F_{n}(q)=H^\frakp |_0B_{n}(q)$ at each cusp converges coefficient-wise, except possibly the constant term. The action of $\hatT_p^{n}$ on a constant terms is simply multiplication by $\frac{1-p^{n+1}}{1-p}\equiv \frac{1}{1-p}\pmod{p^{n+1}}$, so the difference of constant terms must go to zero as well. If $C_\rho$ is the constant term of $F^\sigma$ at the cusp $\rho$, then the constant term of $F_{n}$ at $\rho$ can be found using (\ref{prodH});
\[
C_\rho\left(1+\frac{\beta^{-n-d}}{1-\beta^{-d}}\left( - \overline\beta\frac{-p^n+p^{n+d}}{1-p}
+ \frac{-p^{n+1}+p^{n+d+1}}{1-p}\right) 
\right) \equiv C_\rho\pmod{(p/\beta)^{n+1}}.
\]
  If $F^\sigma$ is weakly holomorphic, then (\ref{prodH}) and Lemma \ref{HeckeLimit} tells us that the coefficients of $F_{n}$ at any cusp eventually converge to those of $F^\sigma$.
    
Let $\widehat H_i$  be the locally constant functions from part (\ref{HL_ValLimits}) of Lemma \ref{HeckeLimit}, so that $\widehat H_j(E)\equiv H^\frakp|_0\hatT_p^j(E)\pmod {p^{j+c_d}}$ for any curve $E$ in the $\frakp$-integral locus. Then given such a curve $E$, we have 
\begin{equation}\label{ValDiff}
\begin{split}
\left(F_{n+1}-F_{n}\right)(E)
&\equiv \beta^{-n-d-1}\frac{ \widehat H_{n+d}-\widehat H_{n}}{1-\beta^{-d}}
\equiv 0  \ \ \pmod{p^{n+c_e-b_\beta}\beta^{-n-1}}
\end{split}
\end{equation}
where $b_\beta =v_p(1-\beta^d).$ 
Thus the functions converge. 
Notice since $\beta$ is not a root of unity, $b_\beta$ can be bounded independent of $d$. Similarly,  $d$ may be replaced with any positive multiple without altering the result modulo $p^{n+c_e-b_\beta}\beta^{-n-1}$. In particular, expanding the field of definition does not alter the limit.

We are nearly ready to show the sequence of functions $(F_{n})$ converge under the second regularization. However we still need suitable functions $G_{n}\in \mathcal M^{p}_0(N;K).$ The construction is not difficult, but the functions must satisfy several properties so there are several short steps involved.
 
  Choose some constants $\lambda$ and $P$ so that
\[
P:=p^{\lambda-1}(p-1)>2
\]
 and let $\mathcal A:=E_{P}\in M_P(1;K_\sigma)$ be the standard Eisenstein series of weight $P$, which satisfies $E_{P}(q)\equiv 1\pmod{p^\lambda}.$ The function $\mathcal A^{12}/\Delta^{P}$ forms a polynomial in $j$ whose roots $\pmod{p}$ are exactly the supersingular $j$-invariants. The space $M^{!}_{P}(N;K_\frakp)$ contains forms with any given order of pole, and so it contains a form
 $f_1$ with the same principal part as $F^{\sigma}\cdot \mathcal A.$ The space also contains Eisenstein series with a constant term at any single fixed cusp, and so we may modify the constant terms of $f_1$ to obtain a function $f_2$ so that $f_2/\mathcal A$ has vanishing constant terms at every cusp. Finally, if the coefficients of $f_2$ are not $\frakp$-integral, then modulo the $\frakp$-integers, they reduce to the coefficients of some cusp form in $S_{P}(N;K)$. By subtracting this cusp form, we obtain a function, $f_3$, which has $\frakp$-integral coefficients. Then let $F^*=f_3/\mathcal A \in \mathcal M^{p}_{0}(N;K)$. By construction, this form has $\frakp$-integral coefficients and has the same principal parts as those of $F^{\sigma}$ at all cusps, except for the constant terms which are all $0$.

We can now define 
\[G_{n}:=F^*|_{0}\left((a_g(p)-\hatT_{p})B_{n}-1\right).\]
Notice the principal part of $F_{n}-G_{n}$ is always the same as that of $F_g^{\sigma},$ apart from the constant term. The constant terms of all the $G_{n}$ are $0$, and we have already seen that the constant terms of $F_{n}$ at cusps converge to those of $F_g^{\sigma}.$ Using Lemma \ref{HeckeLimit} for $\mathcal M^{p}_0(N;K)$ and (\ref{prodH}), we see that the $G_{n}$ converge to $0$ on the $\frakp$-ordinary locus, and the derivatives $\D G_{n}(q)$ converge to $0$ under the first regularization. This last piece is required to show that the sequence $(F_{n})_n$ converges under the second regularization, with the sequence $(G_{n})_n$ regularizing the convergence near the cusps.  

This argument above goes through mutatis mutandis for $\overline F_n$.  
Let the operators 
 $\overline A_n$ and 
  functions  
  $\overline G_{n}$ be defined by swapping $\beta$ and $\overline\beta$ in the definitions of $A_n$ and $G_n$ above. As long as  $v_p(\beta)>0,$ the functions $\overline F_{n}$ will converge under the second regularization, though more slowly than will the $F_{n}$, as seen by swapping $\beta$ for $\overline\beta$ in the modulus of (\ref{ValDiff}). 
\end{proof}
 
   The sequence of differences $\frac{(F_{n}-\overline F_{n})}{\beta-\overline\beta}$ converges to a function which has no principal part. If our initial form is weakly holomorphic, the difference is identically $0$. Otherwise, its derivative is a weight $2$ cusp form. Using the Hecke relations for $g$, it is easy to see that the derivative must be a multiple of $g$, since these same relations send $F^\sigma$ to a weakly holomorphic form, and commute with the operators $B_n$. In particular, we may view the limit of the sequence $\left(\frac{(F_{n}-\overline F_{n})}{\beta-\overline\beta}\right)_n$ as an analog of a cuspidal mock modular form related to $g$, as described in Section \ref{SecGenHMFS}.

This cuspidal form, when it exists, can be recovered from the function $F^\frakp$ which has singularities. For instance 
\[
(F_{n}-\overline F_{n})=\lim_{n\to\infty}F^\frakp|_0(A_{n}-\overline A_{n})
\]
since $F^\frakp|_0(a_g(p)-\hatT_p)=H.$ This operation realizes a modified version of the $p$-adic coupling of between mock modular forms and their shadows as studied by Guerzhoy--Kent--Ono (using the $T_p$ Hecke operators rather than the $U_p$ operators) as an operation on $\frakp$-adic harmonic Maass forms. 

The construction of the cuspidal form fails when $v_p(\beta)=0$. In that case the functions $\overline F_{n}$ converge coefficient-wise as $q$-series at any cusp, but not by means of the regularizations considered here as functions on $\mathcal E(N;K_\frakp)$.

If $K$ contains the coefficients of every newform $g$ of level dividing $N$ and $\nu=\sigma$ is any infinite place of $K$, then the full space $H_0(N;K_\nu)$ can be decomposed as 
 \begin{equation}\label{eqn_HMF_decomp}
 H_0(N;K_\nu)=\bigoplus_{\substack{g\in \mathcal S_M\\ \delta~\mid ~N/M}}H^g_0(M;K_\nu)|W_\delta.
 \end{equation}
 Here, we use $\mathcal S_M$ to denote the set of primitive newforms of level $M$, where $M$ is a divisor of $N$.
We define  $H_0(N;K_\frakp)$ by (\ref{eqn_HMF_decomp}), taking $\nu=\frakp.$
Since the $g|_2 W_\delta$ are all linearly independent, the intersection of any of these spaces must consist only of weakly holomorphic modular forms.

\subsection{ A construction when $\frakp\mid N$}\label{SubSecConstPN}
If $g$ is as above, but $\frakp$ divides the level, the situation is somewhat less straightforward. In this case, the supersingular locus divides $\mathcal E(N;K^\frakp)$ into two regions, one near the cusps with denominators divisible by $p$, and one near the cusps with denominators coprime to $p$. The two regions can be distinguished by, for instance, whether or not
\[
 \lim_{n\to \infty} j|_0(p U_p)^n(E) = 0.
 \]

The cusp form $g$ must satisfy $g|_2W_p=\lambda_p g$ with $\lambda_p=-a(p)=\pm1.$
As before, we may construct functions 
\begin{align*}
H^{\sigma}_{1,n}&:=F^{\sigma}|_0\left(1-(pU_p)^n\right)\\
H^\sigma_{2,n}&:=F^{\sigma}|_0\left(1-W_p(pU_p)^nW_p\right)
\end{align*}
which are weakly holomorphic, and whose coefficients at each cusp are in $K$ and are $\frakp$-integral. The corresponding functions $H^{\frakp}_{1,n}$ and $H^{\frakp}_{2,n}$ in $M^!_{0}(N;K_\frakp)$ converge as $p$-adic modular forms defined respectively on each of the two regions of $\mathcal E(N;K^\frakp)$. It turns out that the two sequences of functions also converge on the supersingular locus, but they necessarily disagree at some point unless $F^\sigma_g$ is weakly holomorphic. 

For our purposes, we will define the function $F_g^\frakp$ for $E \in \mathcal E(N;K^\frakp)$ not supersingular by the piecewise limit  
\[
F^\frakp_g(E)=\begin{cases} \lim_{n\to \infty} H^{(1)}_n(E) &\text{ if }\displaystyle\lim_{n\to \infty} j|_0(p U_p)^n(E)=0 \\
\lim_{n\to \infty} H^{(2)}_n(E) & \text {otherwise.} \end{cases}
\]
The $q$-expansions for $F^\frakp$ are defined by 
\[
F^\frakp_g|W_D(q)=\begin{cases} \lim_{n\to \infty} H^{(1)}_n|W_D(q) &\text{ if }(p, D)=1 \\
\lim_{n\to \infty} H^{(2)}_n|W_D(q) & \text {otherwise.} \end{cases}
\]

\subsection{Some Proofs}\label{SecProofs}
Here we prove Lemma \ref{HeckeLimit}, Theorem \ref{ThmPhmf1},  and Theorem \ref{ThmCorrespondence} parts (\ref{ThmCorr_Principal}),(\ref{ThmCorr_AlphaCoeff}), and (\ref{ThmCorr_HeckeEqui})
\begin{proof}[Proof of Lemma \ref{HeckeLimit}]
Since $p$ does not divide $N,$ the action of the Hecke operator $\hatT_{p^n}$ on $q$-series is given by
\[
\sum_{i=0}^n(pU_p)^{n-i}V_{p^i}.
\]
This acts on constants by multiplication by $\frac{1-p^{n+1}}{1-p}.$ Moreover, we see
\begin{equation}\label{EqnTHatCong}
f|_0\hatT_{p^n}(q)\equiv f|_0\hatT_{p^{n-1}}(q^p)\pmod{p^n}.
\end{equation}
 An induction argument proves the claim about the derivative.

The second part again follows from the congruence (\ref{EqnTHatCong}). 
As in Section \ref{SecNotation}, fix an integral model of $Y_0(N)$ with coordinates corresponding to functions $\hat\varphi_i,\dots,\varphi_M\in M^!_0(N;\O_\frakp)$ so that $M^!_0(N;\O_\frakp)= \O_\frakp[j, \tilde\varphi_1,\dots,\tilde\varphi_M],$ and note the each of these functions takes on values in $\O_\frakp$ for $z\in \O$ . Then 
$$f=r_0(j,\tilde\varphi_1,\dots,\tilde\varphi_M)$$ for some polynomial $r_0$ with coefficients in $\O_\frakp$.
 The congruence (\ref{EqnTHatCong}) implies that 
\[
f|_0\hatT_{p}(q)\equiv r_0(j^p, \tilde\varphi_1^p,\dots,\tilde\varphi_M^p)\pmod{p},
\]
or more generally, there are polynomial $r_i$ with $\frakp$-integral coefficients so that
\begin{align}
f|_{0} \hatT_{p^{~}}=& r_0(j^p, \tilde\varphi_1^p,\dots,\tilde\varphi_M^p)+p\cdot r_1(j, \tilde\varphi_1,\dots,\tilde\varphi_M)\nonumber\\
f|_{0} \hatT_{p^{2}}=&r_0(j^{p^2}, \tilde\varphi_1^{p^2},\dots,\tilde\varphi_M^{p^2})+p\cdot r_1(j^p, \tilde\varphi_1^p,\dots,\tilde\varphi_M^p)+p^2 \cdot r_2(j, \tilde\varphi_1,\dots,\tilde\varphi_M)\nonumber\\
&\vdots\nonumber\\
f|_{0} \hatT_{p^{n}}=&\sum_{i=0}^n p^{i}\cdot r_i(j^{p^{n-i}}, \tilde\varphi_1^{p^{n-i}},\dots,\tilde\varphi_M^{p^{n-i}}).\label{EqnHeckeConv}
\end{align}
The locally constant limit functions $\hat f_n$ are given by
\[
\widehat f_n(E):=\sum_{i=0}^\infty p^{i}\cdot r_i^{p^{n-i}}
(\omega_p^{p^{n-i}}(j(z)),\omega_p^{p^{n-i}}(\tilde\varphi_1(z)),\dots,\omega_p^{p^{n-i}}(\tilde\varphi_M(z))).
\]
Here $\omega_p(x)$ is the character defined in (\ref{DefOmegap})
 which depends only on the residue ${x\pmod \frakp}$. 
 Note that the residue field has order $p^d$ so the $\hat f_n$ are periodic in $n$ with order $d.$
  
In order to see how closely $\hat f_n(z)$ approximates $f_n$, notice that any 
$t\in \O_\frakp$ can be written as $t=\zeta+b$ where $\zeta=\omega_p(t)$ satisfies the polynomial equation $\zeta^{p^d}-\zeta=0,$  and $v_p(b)>0.$ 
 We must find a lower bound for the valuation of 
\[
 t^{p^s}-\zeta^{p^s}=\sum_{i=1}^{p^s}\begin{pmatrix}p^s\\i\end{pmatrix}\zeta^{p^s-i}b^{i}.
 \]
 Assuming $\zeta\neq 0$, this valuation 
 can be bounded below by minimizing the valuation
 \[v_p\left(\begin{pmatrix}p^n\\i\end{pmatrix}\zeta^{p^n-i}b^j\right) =v_p\left(\frac{p^n}{i} ~b^i\right)\geq  n-v_p(i)+i/e\]
  for $1\leq i\leq p^{n}.$ 
  We may assume $i$ is a power of $p$, say $i=p^{\ell}$ with $\ell$ between $0$ and $n$. 
  If we treat $\ell$ as a continuous variable on the interval $0\leq \ell\leq n$, we find the valuation is bounded below by $n+c_e$ where 
\begin{equation}\label{EqRamConst}
c_e= \begin{cases} 1/e & \text{ if } e\leq \log(p)\\
 \frac{-\log(e)+1+\log\log (p)}{\log(p)} & \text{ if } \log(p)\leq e. 
\end{cases}
 \end{equation}
 If $e$ is large and $n$ small ($e\geq p^n \log(p)$), then the valuation minimizes at $\ell=s$, so replacing $c_e$ with $-n+p^n/e$ improves the bound. However, in the interest of giving a bound which is uniform in $n$ we ignore this potential improvement for small $n$.  
\end{proof}

Since we define $H_0(N;K_\frakp)$ in terms of the isotypical components $H_0^^g(N;K_\frakp)$
we will prove Theorem \ref{ThmCorrespondence} (\ref{ThmCorr_Principal}),(\ref{ThmCorr_AlphaCoeff}), and (\ref{ThmCorr_HeckeEqui}) before Theorem \ref{ThmPhmf1}.

\begin{proof}[Proof of Theorem \ref{ThmCorrespondence} (\ref{ThmCorr_Principal}),(\ref{ThmCorr_AlphaCoeff}), and (\ref{ThmCorr_HeckeEqui})]

Part (\ref{ThmCorr_Principal}) follows immediately from the construction.

For part (\ref{ThmCorr_AlphaCoeff}), suppose $F^\frakp\in H^g_0(N;K,\frakp)$. We have that 
\[
\D F^\frakp\subseteq M^!_2(N;K_\frakp)
\]
by the second regularized convergence. 
Since $M^!_2(N;K_\frakp)$ has a rational basis and the principal parts of $F^\frakp$ at cusps are defined over $K$, we may decompose $\D F^\frakp=f^\frakp+h^\frakp$ where $f^\frakp$ has coefficients in $K$ and $h^\frakp$ is a cusp form.
 Part (\ref{ThmCorr_AlphaCoeff}) is equivalent to the assertion that we may take $h^\frakp$ to be a multiple of $g^\frakp$. This follows by noting that for any positive integer $n$, we have that 
 $F^\frakp|_0(\hatT_n-a_g(n))$ is weakly holomorphic, and so $(\D F^\frakp)|_2(\hatT_n-a_g(n))$ has coefficients in $K$. In particular, if we decompose $h^\frakp$ into Hecke eigenforms, only the component corresponding to $g^\frakp$ can be transcendental.

Part (\ref{ThmCorr_HeckeEqui}) follows immediately from the  second regularized convergence, since the operators $A_n$ used in the construction commute with the Hecke algebra and Atkin--Lehner involutions.
\end{proof}

\begin{proof}[Proof of Theorem \ref{ThmPhmf1}]
Parts (\ref{ThmFn_Inclusion})-(\ref{ThmFn_qexp}) follow from the construction as explicit consequences of the second regularized convergence, and from the parts of Theorem \ref{ThmCorrespondence} proven above. Similarly, the existence and modularity of the derivative in part (\ref{ThmFn_Exact}) is an explicit consequence of the second regularization. That the co-kernel is the the space of holomorphic modular forms follows as in the Archimedean case. 
\end{proof}

\section{Structure of the space $H_0(N;K_\frakp)$}\label{SecIP}
  In this section we will discuss certain implications about the structure of $H_0(N;K_\frakp)$, paralleling the known structure of $H_0(N;\C).$ The main theorem of this section relate the Hecke algebra, notions of orthogonality and inner products, and the $p$-adic slopes of modular forms.
 
We have established that for both Archimedean and non-Archimedean places $\nu$, 
\[M^!_2(N;K_\nu)=M_2(N;K_\nu)\oplus \D H_0(N;K_\nu).\]
In the Archimedean case, the image $\D H_0(N;\C)$ is a distinguished subspace $S_2^{\perp}(N;\C)$ of $M_2^!(N;\C)$ of forms with vanishing constant terms which are orthogonal to the space of holomorphic cusp forms with respect to the regularized Petersson inner product.
  
Using Theorem \ref{PairingFormula}, the Bruinier--Funke pairing 
%
 defined in equation (\ref{DefPairing}) extends in an obvious way to our $\frakp$-adic harmonic Maass forms. If $F^\frakp\in H_{0}(N;K_\frakp)$ and $g\in S_k(N;K_\frakp)$ have $q$-expansions at cusps given by
 $$F|_kW_{D}(q)=\sum_{n}a_D(n)q^n \  \ \text{ and } \ \  g|_k W_D(q)=\sum_nb_D(n)q^n$$  
respectively, then define
  \[
\{g,F\}_\frakp= [\SL_2(\Z):\Gamma_0(N)]^{-1}
\sum_{D\mid N}\sum_{n\in \Z} a_D(-n)\cdot b_D(n).
\]
  Using the $p$-adic correspondence of Theorem \ref{ThmCorrespondence}, this gives a natural way to define an algebraic analog of the Petersson inner product $\langle ,\rangle_p$ for each newform, up to a choice of non-zero square norm for each newform. By linearity we may define 
\[
\langle \cdot,\cdot\rangle_\frakp: M^!_{2}(N;K_p)\times S_{2}(N;K_p)\to K_p,
\]
 where for simplicity, we may take the square-norm $\langle g,g\rangle_\frakp$ of any newform to be $1$. For questions of orthogonality, the explicit choice of norms is irrelevant. 
 For the sake of generality we will generally choose not to specify when the choice of square-norms matters. For instance, we can define the \emph{shadow} of a $\frakp$-adic harmonic Maass form in a way that depends on the $\frakp$-adic Petersson inner product.
 
 \begin{definition}
 Let $\langle \cdot,\cdot\rangle_\frakp$ be a choice of $\frakp$-adic Petersson inner product, let $F\in H_0(N;K_\frakp),$ and let $B$ be an orthogonal basis for $S_{2}(N;K_\frakp).$ Then define the \emph{shadow} of $F$, denoted by $\xi F$, in terms of $\langle \cdot,\cdot\rangle_\frakp$ by
 \[
 \xi F:= 
 \sum_{g\in B}\frac{\{ g,F \}_\frakp}{\langle g,g\rangle_\frakp}g.
 \]
 \end{definition}
 
  Interestingly, we can nearly characterize the orthogonal subspace in terms of the $\frakp$-adic \emph{slopes} of the $q$-expansions.
Suppose $F\in M^!_2(N;K)$ has a $q$-expansion at each cusp $\rho$ given by 
$\displaystyle\sum_{n\in\Z} a_\rho(n)q^n.$ 
Then the $p$-slope of $F$ at the cusp $\rho$ is defined by the limit of the $q$-series congruences
\begin{equation}\label{DefSlope}
\liminf_{n\to \infty} \frac{v_p(F|_2W_pU_{p^n}(q))}{n}.
\end{equation}
For instance if $g\in S_2(N;\Q)$ and $(p,N)=1$, then the Weil bound implies its slope is either $0$ or $1/2$. In the latter case, if $p\geq5$ then $a(p^n)$ must vanish for $n$ odd and $a(p^{2n})=(-p)^n.$ 

The orthogonality of the spaces $S_2^{\perp}(N;K_p)$ and the holomorphic cusp forms $S_2(N;K_p)$ follows directly from a duality relation between the coefficients of forms in a canonical basis for each of the spaces $S_2^{\perp}(N;K_p)$ and $H_0(N;K_p).$ This duality is similar to dualities studied by Zagier\cite{ZagierTraces} and Duke--Jenkins\cite{DukeJenkins}, and is closely tied to the action of the Hecke algebra on these spaces.

For $\delta\mid N$ and $m\geq 0$, we find that there exist functions $F^\frakp_{0,N;\delta,m}\in H_0(N;K_\frakp)$ defined uniquely by their principal parts 
\[
F^\frakp_{0,N;\delta,m}|_0W_D(q)=\begin{cases} q^{-m}+O(q) &\text{ if } D=\delta\\
O(1) &\text{ otherwise}.
\end{cases}
\]
For $m\neq 0$, let $F^\frakp_{2,N;\delta,m}:= -\frac {1}{m}\D F^\frakp_{0,N;\delta,m}$, and let the $q$-expansions at cusps of these functions be given by 
\begin{align*}
F^\frakp_{0,N;\delta,m}|_0W_D(q)&=\sum_{n\in \Z} A_{0,N}^\frakp(m,n;\delta,D)q^n,\\
F^\frakp_{2,N;\delta,m}|_2W_D(q)&=\sum_{n\in \Z} A_{2,N}^\frakp(m,n;\delta,D)q^n.
\end{align*}
Then we have the following theorem relating the Hecke algebra, duality, orthogonality, and slopes.

\begin{theorem}\label{ThmIP}
Assume the notation above. 
Then the following are true.
\begin{enumerate}
\item The space $H_{0}(N;K_\frakp)$ is generated by the action of 
$\T^*_{2-k}(N;K_\frakp)$ 
 acting on a single element. \label{ThmIP_Hecke}
\item The forms $F^\frakp_{0,N;D,m}\in H_0(N;K_\frakp)$ and $F^\frakp_{2,N;D,m}\in M^!_2(N;K_\frakp)$ with principal parts described above exist. Moreover, their coefficients are dual: if $m,n> 0$, then they satisfy the relations  
\[
nA_{0,N}^\frakp(m,n;\delta,D)=-mA_{2,N}^\frakp(m,n;\delta,D)
\]
and
\[
A_{0,N}^\frakp(m,n;\delta, D)=-A_{2,N}^\frakp(n,m;D,\delta).
\]\label{IP_dual}
\item The space $S_2^{\perp}(N;K_\frakp):=\D H_0(N;K_\frakp) \subset M^!_2(N;K_\frakp)$ 
 is the unique maximal subspace consisting of forms with vanishing constant terms which are orthogonal to the holomorphic cusp forms $S_2(N;K_\frakp)$ with respect to any normalization of the inner product $\langle,\rangle_\frakp$ described above. \label{IP_orth}
\item Let ${f\in M^!_2(N;K_p)}$. If $f\in S_2^{\perp}(N;K_\frakp),$ then it has slope at least $1/2$ at every cusp. Otherwise  
there is some cusp at which it has slope no greater than $1/2.$
\label{IP_slope}
\end{enumerate}
\end{theorem}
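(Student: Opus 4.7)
The plan is to prove the four parts in order, with part (2) providing the structural backbone for parts (3) and (4). For part (1), I will mimic Proposition~\ref{PropCGen}: start from a base $F^\frakp_*\in H_0(N;K_\frakp)$ with a simple pole at $\infty$ and no other singularities, obtained by lifting through $\D$ in the exact sequence of Theorem~\ref{ThmPhmf1}(\ref{ThmFn_Exact}) a weight-$2$ weakly holomorphic form of the appropriate principal-part pattern. Then $F^\frakp_*|_0\hatT_n$ produces pole $q^{-n}+O(1)$ at $\infty$ via (\ref{EqnTHatCoP}), composition with $W_\delta$ transports the pole to the cusp of index $\delta$, and the trace operators (\ref{EqnTrace1})--(\ref{EqnTrace2}) supply a constant. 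Since two elements of $H_0(N;K_\frakp)$ sharing principal parts at every cusp differ by a constant (their difference has derivative a holomorphic modular form with vanishing constant terms, hence is zero), this exhausts the space.

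For part (2), existence of $F^\frakp_{0,N;\delta,m}$ follows from Theorem~\ref{ThmPhmf1}(\ref{ThmFn_Exact}) once one exhibits a weight-$2$ form with the desired principal part, provided by the integral basis used in the proof of Lemma~\ref{RegCon1}; uniqueness is automatic once the normalization (vanishing non-polar constant terms) is imposed. The first duality $nA_0^\frakp=-mA_2^\frakp$ is immediate from $F^\frakp_{2,N;\delta,m}=-\frac{1}{m}\D F^\frakp_{0,N;\delta,m}$. The second duality is a $\frakp$-adic incarnation of the weight-$0$/weight-$2$ duality of Zagier and Duke--Jenkins; I will prove it by computing the $\frakp$-adic Bruinier--Funke pairing $\{F^\frakp_{2,N;D,n},F^\frakp_{0,N;\delta,m}\}_\frakp$ via Theorem~\ref{PairingFormula} in two ways, reading off $A_0^\frakp(m,n;\delta,D)$ from one grouping and $-A_2^\frakp(n,m;D,\delta)$ from the other.

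Part (3) rests on this pairing. Orthogonality of $\D H_0(N;K_\frakp)$ to $S_2(N;K_\frakp)$ follows because the $\frakp$-adic Bruinier--Funke pairing depends only on the principal parts of $F\in H_0(N;K_\frakp)$ (Theorem~\ref{PairingFormula}) and vanishes against cusp forms by the duality of part (2) combined with Hecke equivariance of the construction. Vanishing constant terms are automatic from $\D$. Maximality is then an exact-sequence/dimension count: any $f\in M^!_2(N;K_\frakp)$ with vanishing constant terms orthogonal to all cusp forms admits a preimage in $H_0(N;K_\frakp)$ under $\D$, hence lies in $S_2^\perp(N;K_\frakp)$.

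For part (4), the slope analysis uses the construction of Section~\ref{SubSecConstruction}. For a newform $g$ with roots $\beta,\overline\beta$ of $x^2-a_g(p)x+p$ ordered by $p$-valuation, the operator $B_n$ of (\ref{DefBn}) carries the factor $\beta^{-j-1}$, and the identity (\ref{prodH}) together with Lemma~\ref{HeckeLimit} shows that the coefficients of $\D F^\frakp_g|_2 W_p U_{p^n}$ at each cusp grow no faster than $\beta^{-n}$ in $\frakp$-adic size. Since $v_p(\beta)+v_p(\overline\beta)=1$ and $v_p(\beta)\leq v_p(\overline\beta)$, this produces slope at least $v_p(\overline\beta)\geq 1/2$. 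The converse uses part (3): a form of slope strictly below $1/2$ at some cusp would pair non-trivially with a low-slope stabilization of a cuspidal Hecke eigenform, violating orthogonality. The main obstacle will be the boundary case $v_p(\beta)=v_p(\overline\beta)=1/2$ (handled via the symmetrized construction (\ref{SlopeHalf})) and the case $\frakp\mid N$ (handled via Section~\ref{SubSecConstPN}), both of which require careful bookkeeping of Newton-polygon data at ramified places but introduce no fundamentally new ideas.
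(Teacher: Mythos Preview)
Your approaches to parts (\ref{ThmIP_Hecke}), (\ref{IP_orth}), and (\ref{IP_slope}) are essentially those of the paper, and they are fine.

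The gap is in part (\ref{IP_dual}). Your plan is to deduce the second duality
\[
A_{0,N}^\frakp(m,n;\delta,D)=-A_{2,N}^\frakp(n,m;D,\delta)
\]
by computing the extended pairing $\{F^\frakp_{2,N;D,n},F^\frakp_{0,N;\delta,m}\}_\frakp$ ``in two ways.'' But the coefficient formula from Theorem~\ref{PairingFormula} yields a single finite sum, and the two ``groupings'' you describe are precisely the two summands $A_{0,N}^\frakp(m,n;\delta,D)$ and $A_{2,N}^\frakp(n,m;D,\delta)$; concluding that one equals the negative of the other requires knowing that the pairing itself vanishes. Over $\C$ this vanishing is the statement $\D H_0\perp S_2$, proven by Stokes and the Petersson inner product. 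In the $\frakp$-adic setting you have no independent access to that orthogonality: the $\frakp$-adic inner product is \emph{defined} through the pairing, so invoking orthogonality to prove the duality is circular (indeed you yourself cite the duality of part (\ref{IP_dual}) when proving part (\ref{IP_orth})). A residue-theorem argument would require $F^\frakp_{2,N;D,n}\cdot F^\frakp_{0,N;\delta,m}\in M^!_2(N;K_\frakp)$, which fails because $F^\frakp_{0,N;\delta,m}$ is in general not weakly holomorphic; and passing to the weakly holomorphic approximants $F_k$ from the second regularization does not help directly, since their principal parts grow and the corresponding contributions to the pairing need not converge to the naive limit.

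The paper avoids this entirely. It first records the Hecke relation $F^\frakp_{0,N;\delta,m}=F^\frakp_{0,N;1,1}|_0\hatT_mW_\delta$, and then proves the equivalent symmetry $nA_{0,N}^\frakp(m,n;\delta,D)=mA_{0,N}^\frakp(n,m;D,\delta)$ by a direct $q$-expansion computation: one factors $m=m_1m_2m_3$ and $n=n_1n_2n_3$ according to divisibility by $N$ and by $\widetilde D=\delta D/(\delta,D)^2$, and then uses (\ref{EqnTHatCoP}) and (\ref{EqnTHat}) to swap each factor of $m$ with the corresponding factor of $n$. This is purely combinatorial manipulation of Hecke operators on $q$-series and never appeals to any pairing or residue statement. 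Only afterwards does the paper feed this duality into the pairing formula to obtain part (\ref{IP_orth}), exactly as you do. So your logical architecture for (\ref{IP_orth}) and (\ref{IP_slope}) is correct, but you need to replace the pairing argument for the second duality in (\ref{IP_dual}) with the Hecke-operator computation.
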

The description given in the theorem for $S_2^{\perp}(N;K_p)$ in terms of slopes is useful because it tells us when forms in $H_0(N;K_\frakp)$ can or cannot be standard $p$-adic modular forms. 
If $f\in S_2^{\perp}(N;K_\frakp)$ has slope at least $1$ at every cusp, then it has an anti-derivative $F\in H_0(N;K_p)$ with $f=\D F.$ Then $F$ has non-negative slope. It is a $p$-adic modular form of a type studied by Bringmann--Guerzhoy--Kane \cite{MockAsPAdic} and Kane--Waldherr \cite{KaneCong}. Moreover, $F$ is $p$-harmonic under Candelori's definition \cite{Candelori}. As we will see, this occurs if and only if the shadow of $F$ consists of only $p$-ordinary eigenforms. Candelori explicitly restricts his attention to this situation, although he phrases the condition in terms of the de Rham cohomology rather than the pairing. The statements are equivalent. 

In the alternative case, when $\{g,F\}\neq 0$ for some non $p$-ordinary eigenform, then it has negative slope. This violates the usual $q$-expansion principle for $p$-adic modular forms, and therefore $F$ is not a $p$-adic modular form in the usual sense.

\begin{proof}[Proof of Theorem \ref{ThmIP} part (\ref{ThmIP_Hecke})]
The proof follows as in the proof of Proposition \ref{PropCGen}. We need only prove the existence of a form $P_*^\frakp$ with the required principal part as in that proposition. If $\sigma$ is any Archimedean place of $K$, then the original complex function $P_*\in H_0(N;\C)$ can be decomposed as
\[
P_*=F+\sum_{\substack{g \in \mathcal S_M\\\delta\mid N/M}}F_{g,\delta},
\]
where  $F_{g,\delta}\in H^g(N;K;\sigma)$, and $F$ is weakly holomorphic with coefficients in $K$. We simply replace each form with its corresponding $\frakp$-adic form to obtain 
\[
P_*^\frakp:=F^\frakp+\sum_{\substack{g \in \mathcal S_M\\M|N\\\delta\mid N/M}}F_{g,\delta}^\frakp.
\]
\end{proof}
\begin{proof}[Proof of Theorem \ref{ThmIP} part (\ref{IP_dual})]
Given $P_*^\frakp$ defined above, have $F^\frakp_{0,N;1,1}=P_*^\frakp-c$ for some $c$. In general
\begin{equation}\label{BasisHecke}
F^\frakp_{0,N;\delta,m}=F^\frakp_{0,N;1,1}|_0\hatT_mW_\delta.
\end{equation}

The first relation for the coefficients follows immediately from the definition of $F^\frakp_{2,N;\delta,m}$.
Combined with the first, the second identity is equivalent to 
\[
nA_{0,N}^\frakp(m,n;\delta,D)=mA_{0,N}^\frakp(n,m;D,\delta).
\]
We prove this identity in parts. Let $\widetilde D=\frac{\delta D}{(\delta,D)^2}.$ Factor $n=n_1n_2n_3$, where $n_1$ is the largest factor of $n$ with $(n_1,N)=1$, and $n_2$ is the largest factor of $\frac{n}{n_1}$ with $(n_2,\widetilde D)=1.$ Factor $m=m_1m_2m_3$ defined similarly.

Using (\ref{BasisHecke}), we find 
$A_{0,N}^\frakp(m,n;\delta, D)=A_{0,N}^\frakp(m,n;1,\widetilde D).$ Formula (\ref{EqnTHatCoP}) allows us to work out the action of the operators $\hatT_{m_1}$ and $\hatT_{n_1}$ on  $F^\frakp_{0,N;1,\frac{m}{m_1}}$. Noting that $(m_1,n)=(n_1,m_1n_2n_3),$
 we find 
\begin{align*}
nA_{0,N}^\frakp(m,n;1,\widetilde D)&=\sum_{r\mid (m_1,n)}\frac{nm_1}{r}A_{0,N}^\frakp\left(\frac{m}{m_1},\frac{nm_1}{r^2} ;1,\widetilde D\right)\\
&=m_1n_2n_3 \ A_{0,N}^\frakp(n_1 \, m_2m_3,m_1 \, n_2n_3;1,\widetilde D).
\end{align*}
We also have that $\hatT_{m_3}W_{\widetilde D}=W_{\widetilde D} m_3 U_{m_3},$ which gives
\begin{align*}
m_1n_2n_3A_{0,N}^\frakp(n_1m_2m_3,m_1n_2n_3;1,\widetilde D)
&=m_1n_2n_3m_3A_{0,N}^\frakp(n_1m_2,m_1n_2n_3m_3;1,\widetilde D)\\
&=m_1n_2m_3A_{0,N}^\frakp(n_1m_2n_3,m_1n_2m_3;1,\widetilde D).
\end{align*}

At this point we have successfully exchanged $m_1$ and $m_3$ with $n_1$ and $n_3$ respectively. For $n_2$ and $m_2$ we consider prime factors individually. Suppose $\ell$ is a prime dividing $N/\widetilde D$ and $m'$ and $n'$ are positive integers coprime to $\ell.$ Then using equation (\ref{EqnTHat}) for the action of the operators $\hatT_{\ell^a},$ we find that 
\begin{align*}
\ell^{b}A_{0,N}^\frakp&(m'\ell^{a},n'\ell^{b};1,\widetilde D)\\
&=\sum_{i=0}^{\min(a,b)}
\ell^{b}A_{0,N}^\frakp(m',n'\ell^{a+b-2i};1,\widetilde D)\ell^{a-i}
-\sum_{i=1}^{\min(a,b)}
\ell^{b}A_{0,N}^\frakp(m',n'\ell^{a+b+2-2i};1,\widetilde D)\ell^{a+2-i}\\
&=\ell^{a}A_{0,N}^\frakp(m'\ell^{b},n'\ell^{a};1,\widetilde D).
\end{align*}
Applying this calculation for each prime $\ell\mid m_nn_2,$ we find that 
\[
nA_{0,N}^\frakp(m,n;1,\widetilde D)=mA_{0,N}^\frakp(n,m;1, \widetilde D),
\]
or equivalently
\[
nA_{0,N}^\frakp(m,n;\delta,D)=mA_{0,N}^\frakp(n,m;D,\delta).
\]
\end{proof}

\begin{proof}[Proof of Theorem \ref{ThmIP} part (\ref{IP_orth})] The forms $F^\frakp_{2,N;D,m}$ give an obvious basis in terms of principal parts which uniquely identify forms in the space. 

Orthogonality is an easy consequence of the duality relations. The paring is clearly bilinear. Since the functions $F^\frakp_{0,N;D,m}$ and $F^\frakp_{2,N;D,m}$ span $H_0(N;K_\frakp)$ and $S^\perp_0(N;K_\frakp)$ respectively, the statement follows from the equation  
\[
\{F^\frakp_{2,N;D,n}, F^\frakp_{0,N;\delta,m} \}=A_{0,N}^\frakp(m,n;\delta, D)+A_{2,N}^\frakp(n,m;D,\delta)=0.
\]
 for all positive integers $m,n$ and divisors $\delta,D$ of $N$. 
\end{proof}
\begin{proof}[Proof of Theorem \ref{ThmIP} part (\ref{IP_slope})]
The space of holomorphic modular forms $M_2(N;K_\frakp)$ has a basis of forms which are eigenforms for the Hecke operators $T_n$ for $(n,N)=1$ and for the $U_\ell$ operators for $\ell \mid N$.
 The sum of two forms with different slopes takes on the lesser of the two slopes. Similarly, the non-zero sum of eigenforms with the same slope must keep that same slope.

 The newforms of level divisible by $p$ have eigenvalue $\pm1$ with respect to the $U_p$ operator and hence slope $0$. If $g$ is new of level $M$ with $(M,p)=1$, then the regularized forms $g|_2W_\delta-\overline \beta^{-1}g|_2W_{p\delta}$ and $g|_2W_\delta-\beta^{-1}g|_2W_{p\delta}$  for $\delta\mid \frac{N}{pM}$ all have eigenvalues $\beta$ and $\overline \beta$ respectively. Although the latter has slope greater than $1/2$ at infinity,  a short calculation shows it has the same slope as $g$, i.e. $v_p(\beta)\leq 1/2$ at the cusp $0$. Similarly, every non-zero Eisenstein series has at least one cusp with slope $0$. Thus, every holomorphic modular form of weight $2$ has slope no greater than $1/2$ at some cusp.

The elements of $H_0(n;K)$ have slope at least $-1/2$, so their derivatives all have slope at least $1/2$.  Suppose $F^\frakp\in H^g_0(N;K,\frakp)$, and without loss of generality, assume $F^\frakp$ has $\frakp$-integral principal parts at all cusps. If $\frakp$ divides $N$, then $F^\frakp$ is constructed as in \ref{SubSecConstPN} as the limit of forms with $\frakp$-integral coefficient at all cusps. Thus $F^\frakp$ and $\D F^\frakp$ have slopes at least $0$ and $1$ respectively.  

If $\frakp$ does not divide $N,$ let $H^\frakp$ and $F_n$ be the functions defined as in section \ref{SecConstruction}. Recall $H^\frakp$ has $\frakp$-integral coefficients at all cusps. Since the coefficients of the functions $F_{n}$ converge $\frakp$-adically to those of $F^\frakp,$ we can use (\ref{DefBn}) to write the coefficients of $F_{n}$, and therefore the coefficients of $F^\frakp$, in terms of those of $H^\frakp.$
Suppose the $q$-expansions of these functions are given by 
\begin{align*}
H^\frakp(q)&=\sum_{m}c(m)q^m\\
F_{n}(q)&=\sum_{m}b_{n}(m)q^m\\
F^\frakp(q)&=\sum_{m}b_\infty(m)q^m.
\end{align*}

Note that the contribution of the second sum in (\ref{DefBn}) to any given coefficient becomes increasingly insignificant. If $m$ is coprime to $p$ and $v_p(\beta)<1/2$, then, using (\ref{EqnTHatCoP}) we find that
   \begin{equation}\label{CoeffsLimitFormula}
   \begin{split}
  b_\infty(mp^s)=\lim_{n\to \infty}b_{n}(mp^s)&=\sum_{j=0}^\infty \beta^{-1-j}\sum_{i=0}^{\min(j,s)}p^{j-i} c(mp^{s+j-2i})\\
  &=\sum_{h=0}^\infty c(mp^{h}) \sum_{i=\max(s-h,0)}^{s} \beta^{-1-i}\overline\beta^{h-s+i}.
  \end{split}
  \end{equation}
  The minimum valuation from the inner sum comes from the $(h,i)=(0,s)$ term, so we have slope at least $-v_p(\beta)$ at infinity. An identical argument works at the other cusps.
  When $v_p(\beta)=1/2,$ we must add a second term similar to that above swapping $\beta$ and $\overline \beta$, however both terms have a slope of at least $-1/2.$ 
  
  If $p$ is greater than $3$ and is unramified in $K$, then the sum becomes more interesting. In this case $\beta= -\overline\beta=\pm \sqrt{-p}$, and we have
   \begin{equation}\label{CoeffLimitFormula2}
   \begin{split}
  b_\infty(Ap^s)
  &=\sum_{h=0}^\infty c(Ap^{h}) \sum_{i=\max(s-h,0)}^{s} (\beta^{-1-i}\overline\beta^{h-s+i}+\overline\beta^{-1-i}\beta^{h-s+i})\\
&=\sum_{h=0}^\infty c(Ap^{h}) ( \beta^{-1}\overline\beta^{h-s}+\overline\beta^{-1}\beta^{h-s})\sum_{i=\max(s-h,0)}^{s} (-1)^i.
  \end{split}
  \end{equation}
  The outer sum vanishes if $h$ and $s$ have the same parity, but the inner sum vanishes if $h$ is odd and $h\leq s$. Since denominators only arise from the terms with $h\leq s,$ the coefficients 
  $b(Ap^{2n})$ are all integral. If $p\mid 6$ is unramified, then we have that $\beta\equiv -\overline\beta \pmod{p}$ and so similarly the denominator vanishes.
  \end{proof}
\subsection{Slopes and $p$-adic modular forms}\label{SecSlopes}
The $q$-expansions can also be studied using techniques of Guerzhoy and Guerzhoy--Kent--Ono. Given a newform $g$, let 
\[\mathcal E(q):=\sum_{n\geq 1}\frac{a_g(n)}{n}q^n\]
be the \emph{Eichler integral}, or formal antiderivative, of $g(q).$

If $F^\sigma\in H^g_0(N;K,\sigma)$ so that the holomorphic part $F_g^{\sigma+}$ has $q$-expansion $\sum_{n}b^\sigma(n)q^n,$ then as seen previously, if $\alpha\in K+b (1),$ we have that 
 \[
F_\alpha:= F^{\sigma+}(q)-\alpha \mathcal E^\sigma(q)\in K((q)).
 \]
This can be improved to address $\frakp$-denominators for any fixed prime $\frakp$. Proposition 5 of \cite{ZagiersAdele} (see also \cite{MockAsPAdic,PAdicCoupling}) shows that if $\alpha^\sigma$ is chosen as above, then for each prime $\frakp,$ there are $\frakp$-adic constants $\lambda_\frakp(\alpha), \mu_\frakp\in K_\frakp$  so that  the $q$-series 
 \[
\widetilde F_{\frakp}(q):=F_\alpha-\lambda_\frakp \mathcal E(q)-\mu_\frakp \mathcal E(q^p),
 \] 
 has bounded $\frakp$-denominators, and is independent of the choice of $\alpha$. 
These constants are unique for a given $\alpha$ if $g$ is not $p$-ordinary. Otherwise, $\mathcal E(q)-\frac{\beta}{p} \mathcal E(q^p)$ has integral coefficients, but there is a distinguished choice of $\widetilde F_{\frakp}(q)$ so that  $\mu_\frakp=0.$

The operator $(a_g(p)-\hatT_p)$ annihilates $\alpha \mathcal E(q)$, and so we can write $F^\frakp$ in terms of $\widetilde F_{\frakp}(q),$ $\mathcal E(q)$, and $\mathcal E(q^p).$ Equation (\ref{prodH}) shows that in the limit, the operators $(a_g(p)-\hatT_p)B_{n}$ preserve $\widetilde F_{\frakp}(q),$ but will turn $\mathcal E(q^p)$ into some combination of $\mathcal E(q)$ and $\mathcal E(q^p).$ If $v_p(\beta)<1/2$, then by considering the slope, we find that  
\[
F^\frakp_g(q)=\widetilde F_{\frakp}(q)+C \cdot \left(\mathcal E(q)-\frac\beta p \mathcal E(q^p)\right)
\]
 for some constant $C\in K_\frakp$. When $\beta=-\overline \beta$ 
 , then there is some $C\in K^\frakp$ so that 
 \[
F^\frakp_g(q)=\widetilde F_{\frakp}(q)+C\cdot \mathcal E(q^p).
\]

\section{Integrality}\label{SecIntegrality}

Here we prove the bounds on denominators that may arise in the $q$-series and the evaluations of the $\frakp$-adic harmonic Maass forms. Throughout this section let $(F^\nu)_\nu$ be a family of functions as in Theorem \ref{ThmIntegrality}, let $K_N$ be the smallest number field containing the coefficients of every newform of level dividing $N$, and let $\O_\frakp$ be the valuation ring of $K_\frakp$. The constants $\mathcal M_N$, $\mathcal R_N$, and $\mathcal B_N$ used in the theorem will be defined explicitly in the course of the proofs.

 \begin{proof}[Proof of Theorem \ref{ThmIntegrality} (\ref{ThmInt_coeff})] 
 The first question is similar to one considered by Guerzhoy in \cite{ZagiersAdele}. Much of the work here, as in section \ref{SecSlopes}, parallels work done in that paper. 
 
For simplicity at first, suppose $F^\nu\in H^g_0(N;K,\nu)$ for some newform $g$ of level $N$.
For each prime $\frakp$ of $K$, let the $q$-series $\widetilde F_{\frakp}(q)$ and $\mathcal E(q),$ and the $\frakp$-adic constants $\lambda_\frakp$ and $\mu_\frakp$ be defined as in the previous section for some appropriate choice of $\alpha^\frakp$. Then Theorem 1 of \cite{ZagiersAdele} asserts that for all but at most finitely many primes $\frakp$ of $K$, the numbers $\lambda_\frakp$ and $p\mu_\frakp$ are $\frakp$-integral. This is proven using a formula similar to equation (\ref{CoeffLimitFormula2}). In our case, the equation shows that if $\frakp$ does not divide $N$ and $(m,p)=1,$ then 
\[
\beta^{n+1}\cdot a^\frakp_{\delta}(mp^n) \in \O_{\frakp} (\beta).
\]
If $p$ is not ramified in $K_N$, then we cannot have fractional $p$-valuations and so
 \[st \cdot a^\frakp_{\delta}(s^2t)\in \O_\frakp.\]
 Otherwise we may have 
\[
v_\frakp\left(\sqrt{m \mathcal R_N} \cdot a^\frakp_{\delta}(m)\right)\geq 0,\]
where $\mathcal R_N$ is the radical of the norm of $K_N$, which contains a single power of the rational primes which ramify in $K_N.$

If $\frakp$ divides the level of $g$, then the construction can be made by taking the limit of forms with $\frakp$-integral coefficients, and so no denominators arise in this case.

There is an additional source of potential denominators when $F^\nu\not \in H^g_0(N;K;\nu)$ for any newform $g$. The additional possible denominators can be recovered by a linear algebra argument as follows. 

For each prime $\frakp$ of $K$, let $F^\frakp_{0,N;1,1}$ be the function described in Section \ref{SecIP}. 
 with principal part $q^{-1}+O(q)$ at infinity and no other singularities. Then $\D F_{0,N;1,1}^\frakp\in M^!_2(N;K_\frakp)$ and so it has a bound, say $s_\frakp$, on the $\frakp$-valuation of its denominators. Since $F_1^\frakp$ generates the space (besides perhaps the constant functions) under $\TT^*_0(N;K_\frakp),$ we have that this bound must hold for every form $\D F^\frakp$ with ${F^\frakp\in H_0(N;K_\frakp)}$ and with principal parts defined over $\O_\frakp$. 

For each newform $g$ of level $M$ dividing $N$ and each divisor $\delta$ of $N$, we have the pairing
\[
\{g|_2W_\delta, F_{0,N;1,1}^\frakp\}=\begin{cases}\lambda_{g,\delta}\mathcal I^{-1} & \text{ if }\delta\mid M\\ 0 & \text{ if }\delta \not | M, \end{cases}
\] 
where $\mathcal I:=[\SL_2(\Z):\Gamma_0(N)]$ and $\lambda_{g,\delta}=\pm 1$ is the eigenvalue of $g$ under $W_\delta$.  

For each such $g$, suppose we have a fixed function $F_g^\frakp\in H_0(M;K,\frakp)$ and non-zero constant $C_g\in K_\frakp$ 
which satisfies 
\[
\{h|_2W_\delta, F_g^\frakp\}=\begin{cases} \lambda_{g,\delta}\mathcal I^{-1}C_g & \text{ if }h=g \text{ and } \delta\mid M\\ 0 & \text{ otherwise } \end{cases}
\]
for each newform $h$ of level dividing $N$, and each divisor $\delta$ of $N.$ Without loss of generality, we may assume each $F_g^\frakp$ has integral principal parts at all cusps. In this case, the formula for the pairing involves only integral coefficients so $C_g\in \O_K$.
Moreover, the function 
 \[
G:= F_{0,N;1,1}^\frakp-\sum_{\substack{g \in\mathcal S_M\\M| N}} \frac{1}{C_g}F^\frakp_g
 \]
 is weakly holomorphic since its pairing with any cusp form is $0$. Its principal part has denominators  at worst the least common multiple of the $C_g,$ and so this is true for all its coefficients. Thus we would like to minimize the norm of $C_g$ for each such form $g$. In subsection \ref{SubsecMinShadow} we show how to construct functions $F_g^\nu$ satisfying the conditions above and then compute the minimum bound $\mathcal M_N$. This number is closely connected to congruences between orthogonal cusp forms.  For instance, suppose $g,$ $f_g$ and $C_g$ are as above, and suppose there is some form $g'\in S_{2}(N;K)$ with coefficients in $\O_K$ which is orthogonal to $g$, but satisfies $g(q)\equiv g'(q) \pmod{\mathcal M}.$ Then using the formula for the pairing, we must have that  
 \[
 C_g=\mathcal I\cdot \{g,F_g^\frakp\}\equiv_{(\mathcal M)} \mathcal I\cdot \{g',F_g^\frakp\}=0.
 \]
 Thus $\mathcal M \mid C_g$ which implies $\mathcal M\mid \mathcal M_N.$
 
With $\mathcal M_N$ defined as claimed, then altogether we see that if $F^\nu$ has principal parts in $\O_K$ at all cusps, then denominators of the coefficients $a^\nu_d(n)$ can only arise from:
\begin{enumerate}
\item The index, $n$, of the coefficient,
\item Ramified primes,
\item The congruence number $\mathcal M_N.$
\end{enumerate}
In particular, we recover the statement of part (\ref{ThmInt_coeff}) of theorem \ref{ThmIntegrality}.
\end{proof}
 \begin{proof}[Proof of Theorem \ref{ThmIntegrality} (\ref{ThmInt_val})] 
 As before, assume that $F^\nu\in H^g_0(N;K;\nu)$ for some newform $g$ of level $N$.  If $\frakp$ divides the level of $g$, then the function can be taken as the limit of functions with $\frakp$-integral coefficients, so no denominators arise. 
  
 Suppose $\frakp$ does not divide the level, and let $F_n$ be the functions defined in section \ref{SubSecConstruction}. We begin in the case $v_\frakp(\beta)>0.$ Using equation (\ref{ValDiff}), we see that unless $p$ is severely ramified in $K$,  we have the congruence of values, $F_n(E)\equiv F_1(E)\pmod{1}.$ Thus we need only work out denominators for $F_1.$ Using equation (\ref{prodH}), we see that $\beta F_1(q)$ is $\frakp$-integral, and so $\beta F_{n}(E)$ must be. 
 
 If $p$ does not ramify but $v_p(\beta)>0,$ then $\beta=-\overline \beta$ (or if $\frakp$ divides $6$, we have $\beta\equiv-\overline \beta\pmod \frakp$). In this case, using the second case of definition \ref{Def_Fp}, we see the single power of $\beta$ in the denominators of $F_1$ and $\overline F_1$ must cancel to eliminate the fractional power of $p$, and we find that $F(E)$ is $\frakp$-integral.
 
 If $p$ has high ramification degree, (i.e. $e\geq \log p$), it may turn out that $1+c_e<0$ . However from the proof of Proposition \ref{HeckeLimit}, we see that at worst we can replace $n+c_e$ with $0$, and we have that $\beta^2F_n(E)\equiv \beta^2 F_1(E)\pmod 1.$ Since $\beta^2 F_1(q)$ has $\frakp$-integral coefficients, $\beta^2 F_1(E)$ (and hence $\beta^2 F_n(E)$) has positive $\frakp$-adic valuation. 
 
If $p$ does not divide $N$ and $v_p(\beta)=0,$ we have potential denominators arising from the $1-\beta^{-d}$ terms which appear in equation (\ref{DefBn}). These can be cleared by multiplying by 
\begin{equation}\label{B_p,g}
\mathcal B_{\frakp,g}:=1-\beta w_p^{-1}(\beta).
\end{equation}

\noindent If $N':=\frac{N}{\gcd(N,p)}$, then 
 define $\mathcal B_{N,p}=p^r$ where
\begin{equation}\label{B_p}
r:=\max \{v_p(1-\beta_hw_p^{-1}(\beta_h) \ : \ h \in \mathcal S^{\text{new}}(N')\}.
\end{equation}

\noindent Then if 
 $j(E)$ is $p$-integral, we must have that 
\begin{equation}\label{EqnValBoundIso}
v_\frakp\left(\mathcal R_N  F^\frakp(E) )\right)\geq -\mathcal B_{N,p}.
\end{equation}
If we remove the extra assumption that $F^\frakp\in H^g_0(N;K;\frakp)$ for some newform $g$, then as in the previous proof we allow additional denominators dividing $\mathcal M_N.$

\end{proof}
\subsection{Calculating $\mathcal M_N$} \label{SubsecMinShadow}
The calculation is a linear algebra problem. Let $\dim_N=\dim S_2(N;K).$
There is a basis $S_2(N;K)$ has a basis of cusp forms $h_i$ in reduced echelon form so that their $q$-expansions have the shape 
\[h_i(q)=q^{e_i}+\sum_{\substack{j=i+1\\j\neq e_1,e_2,\dots}}^\infty a_{i,j} q^{j}
\] with 
\[1=e_1<e_2<\dots <e_{\dim_N}.\]
The coefficients $a_{i,j}$ of each $h_i$ are in $\Q.$

Since $N$ is square free, $S_2(N;\C)$ has another basis $\mathcal S_N$ given by 
\[\mathcal S_N=\{ h|_2W_D \ : \ h \text{ is a normalized new form of some level } M\mid N, \text{and } D\mid \tfrac{N}{M} 
\}\] 
%
 \noindent
Let $\mathbf C$ be the $\dim_N\times\dim_N$ matrix whose rows are indexed by $\mathcal S$ and whose $(h,i)$-th entry is the $e_i$-th coefficient of $h$ for $h\in \mathcal S.$ The entries of $\mathbf C$ are algebraic integers in some extension $L/\Q$, but the entries of $\mathbf C^{-1}$ my not be integral. Define  $\mathcal M_N$ to be the least positive integer so that the entries of $\mathcal M_N \mathbf C^{-1}$ are integral. 
 It is not hard to see that the modulus of any congruences among forms in $\mathcal S_N$ must divide $\mathcal M_N.$ Additionally, the $i$-th row of $\mathbf C^{-1}$ gives the coefficients to write the form $h_i$ in terms of the forms in $\mathcal S_N$, so any denominators of the coefficients $a_{i,j}$ must also divide $\mathcal M_N$. 

The columns of $\mathbf C^{-1}$ encode the principal parts at infinity for harmonic Mass forms corresponding to each $h\in \mathcal S_N.$ For each $h\in \mathcal S_N$, let $G_h$ be the unique harmonic Maass function whose principal part at infinity is given by 
\[
G_h(q)=\sum_{i=1}^{\dim_N} c_{h,i} q^{-e_i} +O(q)
\]
where $c_{h,i}$ is the $h,i$-th entry of $\mathbf C^{-1}$, and which is bounded at all other cusps. Calculating the Bruinier--Funke pairing, we find that for $g\in \mathcal S_N,$
\[
\{g,G_h\}=\begin{cases}1& \text{ if } g=h\\0&\text{otherwise.}
\end{cases}
\] 

Proceeding as above, we see that for a prime $\frakp$ of $K$, if $\mathcal S^{\text {new}}$ is the set of newforms of levels dividing $N$, then the form 
\[
G^*:= P_*^\frakp-\sum_{h \in \mathcal S^{\text {new}}}G_h^{\frakp}
\]
is weakly holomorphic, where $P_*^\frakp$ is the form defined in the proof of Theorem \ref{ThmIP} part (\ref{ThmIP_Hecke}). Since all denominators in the expansions of $G^*$ and the $G_h^*$ divide $\mathcal M_N,$ the same is true for any denominators of $P_*^\frakp.$

\section{Coefficients of half-integeral weight forms}\label{SecHalfWt}

The definition of harmonic Maass forms given in Section \ref{SecMaass} can be generalized to forms of half integral weight. Any such generalization requires that we allow a non-trivial multiplier system, such as in the modularity of the Jacobi theta function in order to maintain consistency. 
 
Suppose $\nu:\Gamma\to \C$ is a function on some subgroup $\Gamma\subset\GL_2(\R)^+$. 
We modify the definition of the slash operator given in (\ref{EqnMatrixAction}) with respect to $\nu$ so that 

\[
(f|_{k,\nu}\gamma)(\tau)=\nu^{-1}(\gamma) \ |\operatorname{Det}(\gamma)|^{k/2} \ (c\tau+d)^{-k}f(\gamma\tau)
\]
for every $\gamma=\sabcd\in \Gamma.$ Here, if $k$ is a half-integer, then the resulting square roots take the principal branch. Then $\nu$ is a weight $k$ multiplier system for $\Gamma$ if 
\[
(f|_{k,\nu}\gamma)|_{k,\nu}\mu=f|_{k,\nu}(\gamma\mu) \  \text{ for all } \gamma,\mu\in \Gamma,
\]
 independent of the function $f$.

This definition extends naturally to \emph{vector-valued} modular and harmonic Maass forms. These satisfy a similar definition, but now we allow $f$ to represent a vector of functions and require the image of $\nu$ to be unitary matrices.

Definition \ref{DefHMF} extends naturally to include modularity involving multiplier systems and vector valued functions. The remaining facts about harmonic Maass forms still hold (component wise in the case of vector valued forms), except that we may have fractional powers of $q$ in the $q$-series expansions, depending on the multiplier system. 

We are interested in lifts from weight $0$ forms to vector valued forms of weights $1/2$ and $3/2$, with certain explicit multiplier systems. These vector valued forms may be projected down to obtain scalar valued half-integer weight forms. For instance, the lifts considered implicitly by Zagier\cite{ZagierTraces}, and by Miller and Pixton \cite{MillerPixton} can be obtained by summing 
the components of the lifts considered here. 

\subsection{Vector valued modular forms and the Weil representation}
We refer the reader to \cite{AlfesKudlaMillson,ECMock,BruinierFunke} for a more thorough consideration of vector valued modular forms. Here we will give a summary aimed at considering forms with multiplier systems determined by the Weil representation. 

The metaplectic group $\operatorname{Mp}_2(\Z)$ consists of pairs $(\gamma,\phi)$ where $\gamma=\abcd\in \SL_2(\Z)$ and $\phi:\H\to \C$ is holomorphic and satisfies 
$$\phi^2(\tau)=c\tau+d,$$ with the group action given by 
\[
(\gamma_1,\phi_1)(\gamma_2,\phi_2)=\left(\gamma_1\gamma_2, \phi_1(\gamma_2\tau)\phi_2(\tau)\right).
\]
The metaplectic group is generated by the two elements 
\[T:=\left(\zxz{1}{1}{0}{1}, 1\right) \ \text{ and } \ S:=\left(\zxz{0}{-1}{1}{0}, \sqrt{\tau}\right ).\]

Given a positive integer $N$, the Weil representation $\rho$ of the metaplectic group acts on a vector space indexed by $\Z/2N\Z.$ This action is given for the generators $T$ and $S$ so that $\rho(T)$ is the diagonal matrix with $j$-th entry $e^{2\pi \I\frac{j^2}{4N}}$, and $\rho(S)$ is the square matrix with $(j,k)$-th entry given by $\frac{\sqrt{-\I}}{\sqrt{2N}}e^{-2\pi \I\frac{j\cdot  k}{2N}}$ for $j,k\in \Z/2N\Z.$ 

We then define the slash operator $ |_{k,\rho}$ so that if $\mathbf f$ is a vector valued function of  dimension $2N$, then 

\begin{equation}\label{vvTrans}
\mathbf{f}|_{k,\rho}\gamma(\tau):=\phi^{-2k}(\tau)\rho^{-1}(\gamma) \mathbf{f}(\gamma \tau).
\end{equation}
 
We say that $\mathbf f$ is modular with respect to $\rho$ with weight $k$ if $\mathbf f$ is invariant under the action of the slash operator given above. We will also consider modularity with respect to the conjugate representation $\overline \rho,$ and here and throughout we will use $\tilde \rho$ to mean either $\tilde \rho=\rho$ or $\overline \rho.$
We denote the $\C$ vector spaces of harmonic Maass forms of weight $k$, index $N$ for the representations $\rho$ and $\overline \rho$ by $H_{k, \rho}(N;\C)$ and $H_{k,\overline \rho}(N;\C)$ respectively.

These representations allow us to compute the transformation of such a form $\mathbf f$ with respect to any matrix $\gamma\in \SL_2(\Z).$ 
The action of the element $T$ (which does not depend on the weight $k$) gives additional information about the $q$-expansions of the components of such forms. 
If $\mathbf{f}$ is in $H_{k, \tilde \rho}(N;\C)$, then the holomorphic part of the $j$-th component, $\mathbf{f}_{j}^+$, will have a $q$-expansion of the form 
\[
\mathbf{f}_{j}^+=\sum_{\substack{D\equiv j^2 \pmod {4N}\\ \lambda D\gg-\infty}}a^+_{\mathbf f}(\lambda D,j)q^{\lambda\frac{D}{4N}},
\]
where $\lambda=1$ if $\tilde \rho=\rho$, and $\lambda =-1$ if $\tilde \rho=\overline\rho$. Given such an expansion, we will refer to $a^+_{\mathbf f}(D,j)$ as the $(D,j)$-th coefficient of $\mathbf f.$

\subsection{The Hecke algebra on half-integral weight modular forms}\label{SecHalfWtHecke}
Half-integer weight modular forms transforming under the Weil representation $\rho$ or under $\overline \rho$ have a theory of Hecke operators. 
As in the integer weight case, the Hecke operators may be defined by taking a trace over the action of matrix coset representatives, so that by construction they preserve modularity properties. 
However, for simplicity here we will only define the operators $\mathbf T_n$ by their action on $q$-series. This action can be given in terms of operators $\mathbf U_n$, $\mathbf V_n$ and $\mathbf S_n$ which act on $q$-series as follows. For further details on Hecke operators for vector valued modular forms of half integral weight see \cite[section 7]{BruinierOno}, \cite{BruinierStein} or \cite[\S 0 equation (5)]{SZ}. Suppose $\lambda=\pm1$ and $\mathbf f(q)$ is a $q$-series vector with components $\mathbf f_s$ indexed by $s\in \Z/(2N\Z)$ which has the $q$-series expansion 
\[\mathbf f_s =\sum_{\substack{D\equiv s^2 \ (4N)\\ \lambda D\gg -\infty}} a(\lambda D,s)q^{\lambda\frac{D}{4N}}.\]
Then we define the operators $\mathbf U_n$, $\mathbf S_n$, and $\mathbf V_n$ so that
the $s$-th components of $\mathbf f|\mathbf U_n$, $\mathbf f|\mathbf S_n$, and $\mathbf f|\mathbf V_n$ respectively are given by
\begin{align*}
\left(\mathbf f|\mathbf U_n\right)_s 
&=\sum_{D\equiv s^2 \ (4N)} a(\lambda D n^2,s n)q^{\lambda \frac{D}{4N}}
\\
\left(\mathbf f|\mathbf S_n\right)_s 
&=\sum_{D\equiv s^2 \ (4N)} \leg{ D}{n}a(\lambda D,s)q^{\lambda \frac{D}{4N}}\\
%
\left(\mathbf f|\mathbf V_n\right)_{s} 
&=
\sum_{\substack{D\equiv r^2 \ (4N)\\rn\equiv s \ (2N)}} a(\lambda D,r)q^{\lambda \frac{n^2\cdot D}{4N}}.
\end{align*}
For 
$k\geq3/2$
 and $n$  coprime to $N$, the Hecke operators $\mathbf T_{n}$ are defined by the action on $q$-series given by 
  \begin{equation}\label{EqnTHalf1}
  \mathbf T_{n}:= \sum_{\substack{ a,b,c\geq 1\\ a\cdot b\cdot c=n}} b^{k-3/2}c^{2k-2}
  \mathbf{U}_{a} \mathbf{S}_b\mathbf{V}_c.
  \end{equation}
 If 
 $k\leq 1/2$
 , define the normalized Hecke operators
  \begin{equation}\label{EqnTHalf2}
  \mathbf T_{n}:= \sum_{\substack{ a,b,c\geq 1\\ a\cdot b\cdot c=n}} a^{2-2k}b^{1/2-k}%
  \mathbf{U}_{a} \mathbf{S}_b\mathbf{V}_c.
  \end{equation}
The formula for unnormalized operators of prime index is given in \cite[eq. (7.1)]{BruinierOno}.
A short calculation shows that 
the weight $1/2$ and $3/2$  operators satisfy the same multiplicative relations as do the $\hatT_n$ operators of weight $0$ or $2$:
\[\mathbf T_n\mathbf T_m=\sum_{d\mid (m,n)} d\mathbf T_{\frac{mn}{d^2}}.
\]

We also have operators corresponding to the Atkin--Lehner involutions. Vector valued modular forms under $\rho$ or $\overline \rho$ must be symmetric in the indices, up to sign, so 
\[\mathbf f_s=\pm\mathbf f_{-s},\]
where the sign depends on the representation. This is due to the fact that the element
\newcommand\bigzero{\makebox(0,0){\text{\huge0}}}
\[
S^2=\left(\zxz{-1}{0}{0}{-1},\operatorname{i}\right)
\]
of the metaplectic group must act as the identity on the forms, but 
\[
\rho(S^2)=-\operatorname{i}\cdot \hat I_{2n} =-\overline {\rho}(S^2)
\]
where $\hat I_{2n}$ is the reflection of the identity matrix,
\[\hat I_{2n}:=
\left(
\begin{array}{ccccc}
0&&1\\
&\reflectbox{$\ddots$}&\\ %
1&&0
\end{array}
\right).
\]
 Equation (\ref{vvTrans}) then implies that weight $1/2$ forms and weight $3/2$ forms with the same representation must have opposite signs. As we will see, this symmetry in the indices is related to the Fricke involution  (see (\ref{FrickeCom})), and can be generalized to Atkin--Lehner involutions. For each $\delta\mid N,$ let $\lambda_\delta$ be the unique number $\pmod{2N}$ satisfying 
\begin{equation}\label{halfWtAL}
\lambda_\delta\equiv -1 \pmod {2\delta} \ \text{ and } \ \lambda_\delta\equiv 1 \pmod {2N/\delta}.
\end{equation}
 We define the Atkin--Lehner involution $\mathbf W_\delta$ for vector valued forms which acts by permuting the component indices by multiplication by $\lambda_\delta$, so that
 \[
(\mathbf f|\mathbf W_\delta)_s=\mathbf f_{\lambda_\delta s}.
 \]
It follows that forms which are modular with respect to $\rho$ or $\overline\rho$ are necessarily eigenfunctions for the Fricke involution $\mathbf W_N$.

We conclude this section with a brief discussion about the field of definition and denominators for coefficients of weight $1/2$ forms.

Similar to the weight $0$ case, the field of definition for the principal part of a form $\mathbf f \in M^!_{1/2,\tilde \rho}(N;\C)$ determines the field of definition for \emph{almost all} the coefficients.  
However there are possible exceptions for the representation $\tilde \rho=\rho$ and coefficients of square index. The Serre--Stark basis theorem implies  that the space of holomorphic modular forms $M_{1/2,\rho}(N;\C)$ may be non-empty (but is spanned by unary theta functions), whereas 
$M_{1/2, \overline \rho}(N;\C)$ is trivial. See \cite[Lemma 6.4]{BruinierOno}. 
\begin{theorem}[{\cite[Lemmas 6.3,6.4,6.5]{BruinierOno}}]\label{ThmSturmHalf}
Suppose $\mathbf f \in M^!_{1/2, \tilde \rho}(N;\C)$ has coefficients $a_{\mathbf f}(m,r)\in K$ for all $m\leq 0.$ Then $a_{\mathbf f}(n,s)\in K$ for all $n\in \Z$, unless $\tilde \rho=\rho$ and $n$ is a square.
If $a_{\mathbf f}(m,r)\in K$ for all $m<4N,$ then $a_{\mathbf f}(n,s)\in K$ for all $n.$
 \end{theorem}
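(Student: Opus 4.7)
The plan is to express $\mathbf f = \mathbf f_K + \mathbf g$ with $\mathbf f_K \in M^!_{1/2,\tilde\rho}(N;K)$ and $\mathbf g \in M_{1/2,\tilde\rho}(N;\C)$ holomorphic, and then analyze $\mathbf g$ via the Serre--Stark basis theorem. The key preliminary observation is that, for any fixed bound on the orders of poles at the cusps, the corresponding subspace of $M^!_{1/2,\tilde\rho}(N;\C)$ admits a basis defined over $\Q$: this follows from an echelon argument on principal parts, using that the principal-part map has the same rank over $\Q$ as over $\C$. Choosing $\mathbf f_K$ to match the principal part of $\mathbf f$, which by hypothesis is $K$-rational, thus produces a holomorphic $\mathbf g = \mathbf f - \mathbf f_K$.

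For the first statement, if $\tilde\rho = \overline\rho$ then $M_{1/2,\overline\rho}(N;\C) = 0$ as recorded in the excerpt, so $\mathbf g = 0$ and we are done. If instead $\tilde\rho = \rho$, the Serre--Stark basis theorem expresses $\mathbf g$ as a linear combination of unary theta series $\Theta_\ell = \sum_{n \equiv \ell \pmod{2N}} q^{n^2/(4N)}$ (placed in the $\ell$-th component of the vector-valued form), whose Fourier expansions only contribute to coefficients at square indices. Hence $\mathbf g$ does not affect $a_{\mathbf f}(n,s)$ when $n$ is not a square, yielding the first claim.

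For the second claim I would further exploit that the Serre--Stark basis is in fact defined over $\Z$. Under the strengthened hypothesis that all coefficients of $\mathbf f$ at indices $m < 4N$ lie in $K$, one builds $\mathbf g_K \in M_{1/2,\rho}(N;K)$ matching those low-index coefficients of $\mathbf g$; the problem then reduces to showing $\mathbf g - \mathbf g_K = 0$. The remaining content is a Sturm-type injectivity: any element of $M_{1/2,\rho}(N;\C)$ whose Fourier coefficients vanish at all indices below $4N$ is itself zero. This is the main obstacle; I would verify it by inspecting the explicit basis $\{\Theta_\ell\}_{\ell \in \Z/2N\Z}$ (modulo the symmetry $\Theta_\ell = \Theta_{-\ell}$), assembling the relevant low-index coefficients into a matrix and showing it has full rank by tracking, in each residue class $\ell \pmod{2N}$, the leading contribution $q^{\ell^2/(4N)}$ to the $\ell$-th component; alternatively one may reduce to the classical Sturm bound for scalar weight-$\tfrac12$ forms on $\Gamma_0(4N)$ by projecting onto components. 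Once the injectivity is in hand, $\mathbf g = \mathbf g_K$, and hence $\mathbf f = \mathbf f_K + \mathbf g_K \in M^!_{1/2,\rho}(N;K)$.
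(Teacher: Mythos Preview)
The paper does not supply its own proof of this statement; it is quoted directly from Bruinier--Ono (Lemmas~6.3--6.5) and used as a black box. So there is no in-paper argument to compare against, only the cited source.

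Your overall strategy---subtract a $K$-rational weakly holomorphic form matching the principal part, leaving a holomorphic $\mathbf g\in M_{1/2,\tilde\rho}(N;\C)$, then invoke Serre--Stark---is the natural one and is essentially how the cited lemmas in Bruinier--Ono proceed. The first assertion follows cleanly: for $\tilde\rho=\overline\rho$ the holomorphic space vanishes, and for $\tilde\rho=\rho$ the unary theta functions contribute only at square indices.

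For the second assertion there is a genuine gap. You describe the ``explicit basis $\{\Theta_\ell\}_{\ell\in\Z/2N\Z}$'', but the functions $\Theta_\ell=\sum_{n\equiv\ell}q^{n^2/(4N)}$ are the \emph{components} of a single vector-valued theta series, not a basis of $M_{1/2,\rho}(N;\C)$; individually they are not modular for the Weil representation. The actual Serre--Stark basis consists of vector-valued unary theta series attached to certain sublattices (or, in the scalar picture, to $\theta(d\tau)$ for suitable $d$), and its size and leading exponents must be worked out before any ``leading-term'' injectivity argument can go through. In particular, your proposed bound via $q^{\ell^2/(4N)}$ would, taken at face value, require $\ell^2<4N$ for all $\ell\le N$, which already fails for $N\ge 5$. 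The correct Sturm-type bound is what Bruinier--Ono's Lemma~6.5 establishes; you would need to either reproduce that computation or cite it, rather than leave it as ``I would verify''.
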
 
 
If $\mathbf f\in M^!_{1/2,\tilde \rho}$ has algebraic coefficients, then its coefficients naturally have bounded denominators. This fact will be important later on to show that differences of certain Hecke operators vanish $\frakp$-adically as in the integer weight case. It is less clear whether the algebraic coefficients of $\mathbf f$ have a bound on denominators when $\mathbf f\in H_{1/2,\tilde \rho}$ is not weakly holomorphic. This question is posed in remark 15 i) of \cite{BruinierOno}. While it appears to still be open whether or not a uniform bound exists for all algebraic coefficients, the following weaker lemma for a single square class suffices for our needs.

\begin{lemma}\label{LemSqrDenoms}
Suppose $\mathbf f\in M^!_{1/2,\tilde \rho}(N;\C)$ so that each component has principal part defined over $K$, and $\mathbf f$ is orthogonal to all cusp forms in $S_{1/2,\tilde \rho}(N;\C).$ If $D$ is a fixed fundamental discriminant with $s^2\equiv D\pmod {4N}$ so that the coefficients $a_{\mathbf f}(Dn^2,sn)$ are in $K$ for all $n\in \Z$, then the denominators of these coefficients are bounded. 
\end{lemma}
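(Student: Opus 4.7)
The plan is to leverage the Hecke equivariance of the correspondence that singles out a fixed square class, and reduce the question to a setting where bounded denominators are standard. Throughout, the fact that $\mathbf{f}$ is orthogonal to cusp forms means that $\mathbf{f}$ is determined by its principal parts at cusps, and hence lies in the same finite-dimensional ambient space as any weakly holomorphic form with the same polar order.

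First I would compute the action of the Hecke operator $\mathbf{T}_{\ell^2}$ for a prime $\ell$ coprime to $4N$ on the coefficients in the $D$-square class. Using the definition (\ref{EqnTHalf2}) together with the formulas for $\mathbf{U}_\ell$, $\mathbf{S}_\ell$, and $\mathbf{V}_\ell$ in Section~\ref{SecHalfWtHecke}, a direct calculation yields a recurrence of the shape
\[
a_{\mathbf{f}\vert\mathbf{T}_{\ell^2}}(Dn^2,sn) = a_{\mathbf{f}}(D(n\ell)^2,sn\ell) + \leg{D}{\ell}\ell^{-1}a_{\mathbf{f}}(Dn^2,sn) + \ell^{-1}a_{\mathbf{f}}(D(n/\ell)^2,sn/\ell),
\]
where the final term appears only when $\ell\mid n$. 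Here the fact that $D$ is a fundamental discriminant is essential: it ensures that no term involving $a_{\mathbf{f}}(D/\ell^2,\cdot)$ appears, since $D$ is not divisible by the square of any prime.

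Next I would fix a bound $P$ on the polar orders of $\mathbf{f}$ at all cusps, and let $V \subset S^\perp_{1/2,\tilde\rho}(N;\C)$ be the finite-dimensional subspace of forms orthogonal to cusp forms with pole order at most $P$. This space is preserved by $\mathbf{T}_{\ell^2}$ for $(\ell,4N)=1$, admits a $K$-rational basis coming from $K$-rational principal parts, and the Hecke algebra restricted to $V$ acts as a matrix algebra over $K$. In particular, eigenvalues of $\mathbf{T}_{\ell^2}$ on $V$ (and more generally of any polynomial in the $\mathbf{T}_{\ell^2}$) lie in a fixed finite extension of $K$, and their $\frakp$-adic denominators are uniformly bounded in terms of the discriminant of the characteristic polynomial acting on $V$.

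Finally, I would iterate the recurrence to express every $a_{\mathbf{f}}(Dn^2,sn)$ as a $K$-linear combination of values $a_{\mathbf{g}}(D,s)$ where $\mathbf{g}$ ranges over finitely many Hecke translates of $\mathbf{f}$ in $V$. Since each such $\mathbf{g}$ has $K$-rational principal parts (the Hecke operators preserve rationality of principal parts and send $V$ to $V$), and since $a_{\mathbf{g}}(D,s)$ sits in the finite-dimensional $K$-vector space generated by $\mathbf{f}$ and its translates modulo the integral Hecke structure on $V$, the coefficients $a_{\mathbf{g}}(D,s)$ have uniformly bounded denominators. Tracing back through the recurrence gives the claimed bound for $a_{\mathbf{f}}(Dn^2,sn)$.

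The main obstacle is Step~2: establishing the recurrence cleanly and showing that the coefficient denominators that appear in inverting the recurrence (essentially inverses of characteristic polynomial evaluations for $\mathbf{T}_{\ell^2}$ on $V$) can be absorbed into a single uniform bound. The subtle point is that one might \emph{a priori} need to invert infinitely many such polynomials (one for each $\ell$), but finite-dimensionality of $V$ means only finitely many $\ell$ are needed to generate the relevant Hecke orbit, after which higher $\mathbf{T}_{\ell^2}$ act via the $K$-linear span of earlier ones.
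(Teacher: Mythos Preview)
Your argument contains a genuine gap at its core: the claim that the finite-dimensional space $V$ of forms orthogonal to cusp forms with pole order at most $P$ is preserved by the Hecke operators $\mathbf T_\ell$ is false. In weight $1/2$ the operator $\mathbf T_\ell$ contains the summand $\mathbf V_\ell$, which sends a pole of order $m$ to a pole of order $m\ell^2$. Thus each application of $\mathbf T_\ell$ strictly increases the pole order of a form with nontrivial principal part, and the Hecke orbit of $\mathbf f$ does \emph{not} lie in any fixed finite-dimensional space. Your concluding sentence (``finite-dimensionality of $V$ means only finitely many $\ell$ are needed to generate the relevant Hecke orbit'') presupposes exactly the stability that fails. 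Without it, the iterated recurrence expresses $a_{\mathbf f}(Dn^2,sn)$ in terms of the $(D,s)$-coefficients of infinitely many forms $\mathbf f|\mathbf T_{\ell}^j$ with unbounded pole order, and you have no uniform control over those values; this is precisely the difficulty the lemma is meant to overcome.

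The paper's proof takes a completely different route through the generalized Borcherds lift. One forms the Borcherds product $\Psi_{D,s}(\mathbf f)$, whose exponents are exactly the coefficients $a^+_{\mathbf f}(|D|n^2,sn)$ on the fixed square class. After taking a Galois trace and raising to a suitable power $M$ (using Theorem~\ref{ThmBorcherdsProd}), one obtains a genuine meromorphic modular function on $\Gamma_0(N)$; tracing down to level $1$ expresses it as a quotient of twisted Hilbert class polynomials, which have algebraic-integer coefficients. An induction on the infinite product then forces $\sqrt{D}\,M\,a^+(|D|n^2,sn)$ to be integral. A final linear-algebra step with the matrix $(\sigma(\alpha^m))_{m,\sigma}$ descends from $\Q$ to a general number field $K$. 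The key point that your approach is missing is some external object (here, the modular function $\Psi^M$) whose integrality is known \emph{a priori} and which packages all coefficients of the square class simultaneously.
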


The condition that $\mathbf f$ is orthogonal to $S_{1/2,\tilde \rho}(N;\C)$ is only of concern when $D=1$ and $\tilde \rho=\rho.$ The proof of this lemma will require the use of the generalized Borcherds lift which we will discuss in the next section.

\section{Lifts of half-integer weight forms}\label{SecLifts}
Harmonic Maass forms of half integral weight are strongly connected to harmonic Maass forms of even integral weight. These connections include the Shimura correspondence, trace lifts, and the Borcherds lifts. 

The Shimura correspondence relates weight $2$ Hecke eigenforms to weight $3/2$ eigenforms for the Hecke operators and Atkin--Lehner involutions defined in Section \ref{SecHalfWtHecke} above, with the same eigenvalues as the weight $2$ form. 

Zagier~\cite{ZagierTraces} demonstrated that the coefficients of certain weight $1/2$ and $3/2$ modular forms can be given as sums of CM values of the $j$-function 
$\widetilde {\mathbf {t}}_1(\Delta,D,rs)(j).$ 
These results have been studied and generalized in several directions. Miller--Pixton~\cite{MillerPixton} extended these results to harmonic Maass forms of higher level and negative weight using formulas for coefficients of Maass--Poincar\'e series; Duke--Jenkins~\cite{DukeJenkins} demonstrated integrality results for similar traces related to modular forms of negative weight and level $1$; and Bruinier--Funke~\cite{BruinierFunke,BruinierFunkeTraces} and Alfes~\cite{AlfesKudlaMillson} have realized these trace maps as theta lifts obtained by taking the inner product of modular functions against certain non-holomorphic theta kernels. These and related theta lifts have been further studied by Alfes~\cite{ ECMock, AlfesMillson}, Duke--Imamo{\=g}lu--T\'oth~\cite{DIT1,DIT2}
and others~\cite{ThetaLiftings,BruinierOno2,Hovel}. From Zagier's work and the work of Duke--Jenkins, it is not hard to see that certain $p$-adic properties of $q$-series are propagated through the lifts. 
 Some of these properties have been explored by Bringmann--Guerzhoy--Kane \cite{PAdicCouplingHalf}. In the case of weakly holomorphic modular forms, the CM values which make up the traces are algebraic numbers, making inherent $p$-adic properties much easier to explore. When the original function is a harmonic Maass form, the traces are generally transcendental with exceptions involving the vanishing of the associated central $L$-values or $L$-derivatives (see \cite{BruinierOno}). 
 Exploring the $p$-adic properties arising from these transcendental numbers can be more problematic. However, the $\frakp$-adic harmonic Maass forms give us a tool to approach this question.

The Shimura correspondence and the three lifts we consider here all respect the Hecke algebra. In the case of the trace-lifts, this allows us to extend the lifts to our $p$-adic harmonic Maass forms by passing the respective operators and limits through the lift. Each of these lifts can be described in terms of theta lifts obtained by integrating a modular or harmonic Maass form against a two-variable theta kernel. These theta kernels are indexed by a fundamental discriminant, and are usually non-holomorphic but modular in each variable with different weights of modularity. 
Two families of \emph{trace-lifts} are defined 
by integrating against the Millson theta kernel (see \cite{ECMock,AlfesMillson}), and the Kudla-Millson theta kernel (see \cite{AlfesKudlaMillson,BruinierFunkeTraces}) respectively. The first of these lifts weight $0$ harmonic Maass forms to weight $1/2$ harmonic Maass forms, while the second lifts a weight $0$ forms to weight $3/2$ forms.  In both cases, the lifts give formulas for the coefficients of the holomorphic parts in terms of modular traces over CM points of the original modular function. These two lifts are dual in that coefficients obtained from one lift are negatives of the coefficients obtained from the other, similar to the duality observed in part (\ref{IP_dual}) of Theorem \ref{ThmIP}. 
The third lift we need for the proof of Lemma \ref{LemSqrDenoms} is the generalized Borcherds lift (see \cite{BruinierOno}) which gives the coefficients of a non-holomorphic modular function
 in terms of coefficients on a single square class of a weight $1/2$ form.

We refer the interested reader to the given references for a more detailed treatment of theta lifts and theta kernels. Here we give only the necessary theorems needed to use the lifts for our purposes. 
In section \ref{SecPLifts} we extend the two trace lifts to $p$-adic harmonic Maass forms in terms of the trace formulas for the coefficients, but we will not attempt to define a $p$-adic version of the theta kernels. 

\subsection{Two trace-lifts}
Throughout this section we will use $\Delta$ and $D$ to denote discriminants with $\Delta D<0,$ along with indices $r,s\in \Z/(2N\Z)$ satisfying 
$$(r^2, s^2)\equiv (\Delta,D) \pmod{4N}.$$ 

The first two lifts require some background on quadratic forms. Let $\mathcal Q_{N,\Delta,r}$ denote the set of positive integral binary quadratic forms $Q=[A,B,C]=Ax^2+Bxy+Cy^2$ with discriminant $\Delta$, $N\mid A$, and $B\equiv r\pmod {2N}.$ The matrix group $GL_2^+(\Q)$ acts on 
such quadratic forms by 
\[
Q(x,y)\vert \abcd = (ad-bc)^{-1}Q(ax+by,cx+dy).
\]
Let $\omega_Q$ denote the order of the stabilizer of $Q$ in $\operatorname{PSL}_2(\Z),$ and $\tau_Q$ be the CM point which is the root of $Q(x,1)$ in the upper half plane. 
We will also need the genus character $\chi_\Delta$ defined by
\[
\chi_\Delta([A,B,C])=\begin{cases}
\leg{\Delta}{n} & \text{if } \Delta\mid B^2-4AC \text{ and } 
 Q \text{ represents } n \text{ with } (n,\Delta)=1,\\
0 & \text{otherwise.}
\end{cases}
\]
If $h\in \Z/(2N\Z)$ satisfying $h^2\equiv \Delta D\pmod{4N},$ then we define the twisted modular trace $\mathbf t_{N}(\Delta,D,h)\, : \, H_0(N;\C)\to \C$ by 
\begin{equation}\label{DefCMTrace}
\mathbf t_{N}(\Delta,D,h) (F):=\sum_{Q\in\mathcal Q_{N,\Delta D,h}/ \Gamma_0(N)} \frac{\chi_{\Delta}(Q)}{\omega_Q}F(\tau_Q)
\end{equation}
which is symmetric in $\Delta$ and $D$ if these discriminants are square free. 
For the purposes of Proposition \ref{PropTransValsInt}, we modify the notation as follows.
\begin{equation}\label{eqnIntroTrace}
\widetilde{\mathbf{t}}_N(\Delta, D, F):=\frac{1}{\sqrt D} \left(\mathbf t_{N}(\Delta,D,F)
-\operatorname{sgn}(\Delta)\mathbf t_{N}(\Delta,D,-F)\right).
\end{equation}

We have the following composite theorem.

 \begin{theorem}
 \label{ThmTraceCoeffs}
Assume the notation above. Let $N$ be a square free positive integer, and let $F\in H_0(N;\C)$ have principal part at each cusp 
 given by 
\[
F|_0W_\delta=\sum_{n\leq 0}a_\delta(n)q^n+O(1)
\]
 for each divisor $\delta\mid N.$

 If $\Delta$ is fundamental, there exists a form 
\[
\vartheta^{1/2}_{\Delta,r}(F) \in H_{1/2,\tilde \rho}(N;\C),
\]
with
  $(|D|,s)$-th coefficient given by 
\[ \mathbf a_F(\Delta,r;D,s ):=\frac{1}{\sqrt D} \big(\mathbf t_{N}(\Delta,D,rs)(F)
-\operatorname{sgn}(\Delta)\mathbf t_{N}(\Delta,D,-rs)(F)\big).\]

If $D$ is fundamental, there exists a form 
\[
\vartheta^{3/2}_{D,s}(F) \in H_{3/2,\overline {\tilde \rho}}(N;\C),
\]
with $(|\Delta|,r)$-th coefficient given by
\[
\mathbf b_F(D,s;\Delta,r ):=  \frac{-1}{\sqrt D}\big(\mathbf t_{N}(D,\Delta,rs)(F)
+\operatorname{sgn}(D)\mathbf t_{N}(D,\Delta,-rs)(F)\big).
\]
Here if $\Delta >0$ then $\tilde \rho=\rho$ 
; otherwise  $\tilde \rho=\overline \rho$. 

The principal part of $\vartheta^{1/2}_{\Delta,r}(F)$ is given by
\begin{equation}\label{EqnPP1}
\vartheta^{1/2}_{\Delta,r}(F)(q)=\sum_{\delta \mid N}\sum_{n<0} a_\delta(n)\sum_{d\mid n}\leg{\Delta}{d}q^{-\frac{n^2}{4Nd^2}|\Delta|} \left(\mathfrak{e}_{\lambda_\delta\frac nd r} -\operatorname{sgn}(\Delta) \mathfrak{e}_{-\lambda_\delta\frac nd r}\right) +O(1),
\end{equation}
and the principal part of $\vartheta^{3/2}_{D,s}(F)$ is given by
\begin{equation}\label{EqnPP2}
\vartheta^{3/2}_{D,s}(F)=\sum_{\delta \mid N}\sum_{n<0} a_\delta(n)\sum_{d\mid n}\frac{n}{d} \leg{D}{d}q^{-\frac{n^2}{4Nd^2}|D|} \left(\mathfrak{e}_{\lambda_\delta\frac nd s} +\operatorname{sgn}(D) \mathfrak{e}_{-\lambda_\delta\frac nd s}\right) +O(1).
\end{equation}
Here $\mathfrak{e}_j$ is the $j$-th standard basis element with $j\in \Z/(2N\Z)$, the symbol $\leg\cdot\cdot$ is the Kronecker symbol, and $\lambda_\delta$ is defined as in (\ref{halfWtAL}).
\end{theorem}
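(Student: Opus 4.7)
The plan is to realize both $\vartheta^{1/2}_{\Delta,r}(F)$ and $\vartheta^{3/2}_{D,s}(F)$ as regularized theta lifts of $F$ against the twisted Millson and Kudla--Millson kernels respectively, and then extract the coefficient and principal-part formulas by unfolding, following the treatment of Alfes--Millson \cite{AlfesMillson} and Bruinier--Funke \cite{BruinierFunkeTraces}. First I would set up $L$ as the standard even lattice of signature $(1,2)$ whose orthogonal Shimura variety is $Y_0(N)$, realized for instance as traceless integral $2\times 2$ matrices with lower-left entry divisible by $N$. For a fundamental discriminant $\Delta$ and $r\in \Z/2N\Z$ with $r^2\equiv \Delta\pmod{4N}$, the $\Delta$-twisted Millson theta kernel $\Theta^{M}_{\Delta,r}(\tau,z)$ transforms in $\tau$ with weight $1/2$ under $\mathrm{Mp}_2(\Z)$ for the representation $\tilde\rho$ and is a weight $0$ non-holomorphic function of $z$ for $\Gamma_0(N)$; the Kudla--Millson kernel $\Theta^{KM}_{D,s}(\tau,z)$ has weight $3/2$ for $\overline{\tilde\rho}$ in $\tau$ and weight $0$ in $z$. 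The choice $\tilde\rho=\rho$ versus $\tilde\rho=\overline\rho$ is forced by whether the twist by $\chi_\Delta$ (resp.\ $\chi_D$) makes the kernel symmetric or antisymmetric under $s\mapsto -s$, which must match the central character $\rho(S^2)=-\overline\rho(S^2)$.

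Second I would define the lifts by Borcherds' regularized integrals
\[
\vartheta^{1/2}_{\Delta,r}(F)(\tau):=\int^{\mathrm{Reg}}_{\Gamma_0(N)\backslash\HH} F(z)\,\overline{\Theta^{M}_{\Delta,r}(\tau,z)}\,\frac{dx\,dy}{y^2},
\]
and analogously for $\vartheta^{3/2}_{D,s}(F)$ with $\Theta^{KM}_{D,s}$ in place of $\Theta^{M}_{\Delta,r}$. Since $F$ has at most polar growth at each cusp, the regularization converges; the $\tau$-modularity is inherited from the kernel, and harmonicity follows from the standard intertwining between the weight-$k$ Laplacian in $\tau$ and the hyperbolic Laplacian in $z$ applied to $\Theta^{M}_{\Delta,r}$ (resp.\ $\Theta^{KM}_{D,s}$), as worked out in \cite{AlfesMillson} and \cite{BruinierFunkeTraces,AlfesKudlaMillson}.

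Third, to extract the $(|D|,s)$-th coefficient of $\vartheta^{1/2}_{\Delta,r}(F)$, I would unfold the regularized integral onto the CM cycles of discriminant $\Delta D$: the $\Gamma_0(N)$-orbits of quadratic forms in $\mathcal{Q}_{N,\Delta D,\pm rs}$ contribute, weighted by $\chi_\Delta$ and by $1/\omega_Q$, producing exactly $\mathbf{t}_N(\Delta,D,rs)(F)-\sgn(\Delta)\mathbf{t}_N(\Delta,D,-rs)(F)$, with the $\sgn(\Delta)$ coming from how the two orbits $\pm rs$ appear in the $\mathfrak{e}_{\pm rs}$-components of the kernel under the symmetry forced by $\tilde\rho$. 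The factor $1/\sqrt D$ is the Gaussian normalization from the holomorphic projection. The analogous unfolding with $\Theta^{KM}_{D,s}$ produces the stated formula for the $(|\Delta|,r)$-th coefficient of $\vartheta^{3/2}_{D,s}(F)$, with the opposite sign $+\sgn(D)$ arising because the Kudla--Millson kernel lies in $\overline{\tilde\rho}$ rather than $\tilde\rho$.

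Finally, for the principal parts I would analyze the contribution near each cusp $W_\delta\infty$ of $F$: the polar term $a_\delta(n)q^n$ pairs against the Fourier expansion of the theta kernel at that cusp, and by a direct computation of the constant term of $\Theta^{M}_{\Delta,r}|_{0,z}W_\delta$ (resp.\ $\Theta^{KM}_{D,s}|_{0,z}W_\delta$) one obtains a finite combination of holomorphic monomials $q^{-|\Delta|n^2/(4Nd^2)}$ (resp.\ $q^{-|D|n^2/(4Nd^2)}$) for $d\mid n$, with coefficient $\chi_\Delta(d)=\leg{\Delta}{d}$ (resp.\ $\leg{D}{d}$) and basis vector $\mathfrak{e}_{\pm\lambda_\delta (n/d)r}$ (resp.\ $\mathfrak{e}_{\pm\lambda_\delta(n/d)s}$); the appearance of $\lambda_\delta$ encodes how $W_\delta$ permutes cusps on the weight-$0$ side and component indices on the half-integer-weight side via (\ref{halfWtAL}). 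Matching yields (\ref{EqnPP1}) and (\ref{EqnPP2}). The main obstacle is bookkeeping rather than conceptual: carefully tracking the twists $\chi_\Delta$ and $\chi_D$ through the Atkin--Lehner permutations $s\mapsto \lambda_\delta s$ and reconciling the opposite signs $-\sgn(\Delta)$ versus $+\sgn(D)$ in the coefficient formulas with the central-character distinction between $\rho$ and $\overline\rho$.
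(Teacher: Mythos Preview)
Your proposal is correct in outline, but it reproves from scratch what the paper simply cites. The paper's proof is much shorter: it observes that the statement is, for $F$ with non-constant principal part only at the cusp $\infty$, literally a composite of results already established in \cite{AlfesKudlaMillson} (Theorems 1.1, 4.3, 4.6) and \cite{ECMock} (Theorems 4.1, 4.5, 4.6). The only new content is the extension to $F$ with poles at arbitrary cusps, and here the paper takes a different route from yours. Rather than analyzing the contribution at each cusp $W_\delta\infty$ directly as you propose, the paper proves the Atkin--Lehner compatibility
\[
\vartheta^{1/2}_{\Delta,r}(F|_0W_\delta)=\vartheta^{1/2}_{\Delta,\lambda_\delta r}(F)
\]
via the elementary identity $\mathcal S_{\Delta,r}|W_\delta=\mathcal S_{\Delta,\lambda_\delta r}$ on quadratic-form classes, and then decomposes $F=\sum_{\delta\mid N}F_\delta|_0W_\delta$ with each $F_\delta$ singular only at $\infty$, reducing immediately to the cited single-cusp case.

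Your direct cusp-by-cusp unfolding would certainly work and is conceptually the same computation that underlies the cited theorems, but it is more labor. The paper's reduction via Atkin--Lehner is both shorter and has the added benefit of establishing the compatibility relation (\ref{FrickeCom}) explicitly, which is reused later when Hecke-equivariance of the lifts is needed.
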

This corrects a sign error in Theorem 4.5 and a misplaced $\sqrt{N}$ in Theorem 4.6 of \cite{ECMock} 
(These corrections can be seen in \cite{AlfesMillson} which considers a projection of this lift in the $\Delta<0$ case).
In the notation of \cite{AlfesKudlaMillson} and \cite{ECMock} respectively, the lifts $\vartheta^{3/2}$ and  $\vartheta^{1/2}$ of the theorem are given by  
\[
\vartheta^{3/2}_{D,s}(F)(\tau) = \frac{-1}{\sqrt{D}}\Lambda^e_{D,s}(\tau,F)
\]
 and 
\[
\vartheta^{1/2}_{\Delta,r}(F)(\tau) = \frac{2}{\sqrt{\Delta}}\mathcal I_{\Delta,r}(\tau,F).
\]

\begin{proof}
The theorem is almost directly a composite of Theorems 1.1, 4.3, and 4.6 of \cite{AlfesKudlaMillson} and Theorems 4.1,4.5, and 4.6 of \cite{ECMock}. The only new contribution is that we allow non-constant principal part at cusps other than infinity.

If $\mathcal S_{\Delta,r}=\mathcal Q_{N,\Delta,r}/ \Gamma_0(N)$ with quadratic forms counted with multiplicity $\frac{1}{\omega_q},$ then a short calculation (see \cite[Sec I.1 eq. (6)]{GKZ}) shows that
$$
\mathcal S_{\Delta,r}| W_\delta= \mathcal S_{\Delta, \lambda_\delta r}.
$$
This implies
\begin{equation}\label{FrickeCom}
\vartheta^{1/2}_{\Delta,r}(F|_0W_\delta)=\vartheta^{1/2}_{\Delta, \lambda_\delta r}(F),
\end{equation}
and similarly for $\vartheta^{3/2}_{D,s}.$ Since we may write 
\[
F=\sum_{\delta\mid N} F_\delta|_0W_\delta
\]
with $F_\delta$ having non-constant principal part only at the cusp infinity. The more general statement for the principal parts follows.
\end{proof}

The Hecke operators defined in the previous section also commute with these lifts. As with the Atkin--Lehner involutions, this can be seen by careful consideration of how the matrix expansions for the integer weight Hecke operators affect the cosets of quadratic forms. If $\mathcal S_{\Delta,r}$ is as above, then for a prime $p$ not dividing $N$ the Hecke operator $T_p$ acts on $\mathcal S_{\Delta, r}$ by 
$$
\mathcal S_{\Delta,r}| T_p= \mathcal S_{ p^2\Delta, pr}+\leg{\Delta}{p}\mathcal S_{ \Delta, r}+p\mathcal S_{ \frac{\Delta}{p^2}, rp^{-1}}.$$
Here the final term is taken to be $0$ if $p^2$ does not divide $\Delta$. This formula can be found for instance in \cite[Sec. I.1]{GKZ}, \cite[Proof of Theorem 5(2)]{ZagierTraces} or \cite[pgs. 290--292]{ZagierEisRiem}. Using the multiplication relations already established for the Hecke operators, we can extend these formulas for operators of non-prime index. We find that
\[ 
\vartheta^{1/2}_{\Delta,r}(\mathbf f|_0\hatT_n)=\vartheta^{1/2}_{\Delta,r}(\mathbf f)|_{1/2,\tilde \rho}\mathbf T_n
\]
and 
\[ 
\vartheta^{3/2}_{D,s}(\mathbf f|_0\hatT_n)=\vartheta^{3/2}_{D,s}(\mathbf f)|_{3/2,\tilde \rho}\mathbf T_n.
\]

The non-holomorphic parts of the lifts can be determined using the following, using linearity to extend to 
cases when $\xi_0 F$ is not a multiple of a newform. 

\begin{theorem}[{\cite[Theorem 4.3, Proposition 4.4]{ECMock}}]\label{ThmHalfShadows}
Assume the notation above, and suppose $F\in H^g_0(N;\C)$ for some newform $g\in S_{2}(N;\C)$.
 If $F$ has a zero constant term at all cusps, then the following hold.
\begin{enumerate}
\item 
The lift  $\vartheta^{3/2}_{D,s} (F)$ is weakly holomorphic and 
\[\xi_{1/2} \vartheta^{1/2}_{\Delta,r} (F)\in S_{3/2,\overline {\tilde \rho}}(N;\C)
\]
corresponds to some multiple of $\xi_0 F$ under the Shimura correspondence. \label{NonHolo1}
\item
The lift $\vartheta^{1/2}_{\Delta,r}(F)$ is weakly holomorphic if and only if 
\[L(\xi_0 F,\Delta,1)=0.\]
Here $L(\xi_0 F,\Delta,s)$ is the twisted $L$ function defined as in (\ref{TwistedL}).
   In particular, $\vartheta^{1/2}_{\Delta,r}( F)$ is weakly holomorphic if $F$ is.\label{ThmHalfS_L} 
\end{enumerate}
If we allow the constant terms of $F$ at cusps to be non-zero, then $\xi_{1/2} \vartheta_{\Delta,r}^{1/2} (F)$ and $\xi_{3/2} \vartheta^{3/2}_{D,s} (F)$ may differ from what is stated above by a finite linear combination of unary theta functions.
\end{theorem}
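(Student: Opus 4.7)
The plan is to establish both parts by analyzing how the differential operators $\xi_{1/2}$ and $\xi_{3/2}$ interact with the Millson and Kudla--Millson theta kernels underlying the two lifts, then identifying the results with the classical Shintani/Shimura correspondence and invoking Waldspurger's theorem for the $L$-value characterization.

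First I would recall that $\vartheta^{1/2}_{\Delta,r}(F)$ and $\vartheta^{3/2}_{D,s}(F)$ are given by Borcherds-regularized Petersson integrals of $F$ against two-variable theta kernels $\Theta_{M,\Delta,r}(\tau,z)$ and $\Theta_{KM,D,s}(\tau,z)$, respectively. The Millson kernel is weight $1/2$ in $\tau$ and weight $0$ in $z$; the Kudla--Millson kernel is weight $3/2$ in $\tau$ and weight $2$ in $z$, and (as in Bruinier--Funke) it is closed in $\tau$ in the sense that $\xi_{3/2,\tau}\Theta_{KM}$ is $\overline\partial_{z}$-exact. Applying $\xi_{3/2}$ under the integral and using Stokes' theorem in $z$ produces only boundary contributions at the cusps, and the hypothesis that $F$ has zero constant terms at every cusp kills them. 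This gives $\xi_{3/2}\vartheta^{3/2}_{D,s}(F)=0$, i.e.\ the Kudla--Millson lift is weakly holomorphic, which is the first assertion of Part (1).

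Next, for the shadow of the Millson lift, I would apply $\xi_{1/2}$ under the integral and use the identity of Alfes--Kudla--Millson and Alfes that $\xi_{1/2,\tau}\Theta_{M,\Delta,r}(\tau,z)$ equals, up to an explicit nonzero scalar, the Shintani theta kernel $\Theta_{\mathrm{Sh},\Delta,r}(\tau,z)$, which is weight $3/2$ in $\tau$ and weight $2$ in $z$, times $y^{2}$ absorbing the weight shift in the Petersson measure. The resulting integral is then, by definition, a nonzero multiple of the $\Delta$-twisted Shintani lift of $\xi_0 F$, and since $\xi_0 F \in \C\cdot g$ for the newform $g$ in the hypothesis, this lies in $S_{3/2,\overline{\tilde\rho}}(N;\C)$ and corresponds to $\xi_0 F$ under the Shimura correspondence. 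With Part (1) in hand, Part (2) is immediate: $\vartheta^{1/2}_{\Delta,r}(F)$ is weakly holomorphic iff $\xi_{1/2}\vartheta^{1/2}_{\Delta,r}(F)=0$, which by the previous calculation is equivalent to vanishing of the $\Delta$-twisted Shintani lift of $g$. By Waldspurger's theorem in the Kohnen--Zagier form, that lift vanishes identically iff $L(g,\Delta,1)=0$, hence iff $L(\xi_0 F,\Delta,1)=0$.

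For the final clause about non-zero constant terms, I would split $F=F_0+E$ where $F_0$ has vanishing constant terms at all cusps and $E$ is a weight $0$ constant-at-cusps piece; the lifts of $E$ are read off directly from the trace formulas in Theorem~\ref{ThmTraceCoeffs}, and because those sums degenerate when $F=E$ the output is a linear combination of unary theta series, absorbing the discrepancy. The main technical obstacle in this plan is pinning down the exact scalar relating $\xi_{1/2,\tau}\Theta_M$ to the Shintani kernel and then matching normalizations of the vector-valued Shimura and Shintani lifts in the Weil representations $\tilde\rho$ and $\overline{\tilde\rho}$ precisely enough that the correspondence with $\xi_0 F$ is an equality (up to an explicit scalar) rather than merely a proportionality; this bookkeeping involves the genus character $\chi_\Delta$, the sign-of-$\Delta$ dichotomy between $\rho$ and $\overline\rho$, and the reflection matrix $\hat I_{2N}$ appearing in the symmetry relations between components of the vector-valued forms.
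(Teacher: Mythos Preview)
The paper does not give its own proof of this statement: it is quoted verbatim as a result from the literature, attributed to \cite[Theorem 4.3, Proposition 4.4]{ECMock} (with the Kudla--Millson side going back to \cite{AlfesKudlaMillson} and \cite{BruinierFunkeTraces}). There is therefore nothing in the paper to compare your argument against; your sketch is essentially a compressed account of the proofs in those cited references, and as such it is on target.

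A couple of small points of precision. For the weak holomorphicity of $\vartheta^{3/2}_{D,s}(F)$, the mechanism in \cite{AlfesKudlaMillson} is not quite that $\xi_{3/2,\tau}\Theta_{KM}$ is $\overline\partial_z$-exact with boundary terms killed by the vanishing constant terms; rather, one computes $\xi_{3/2}$ of the lift directly and finds it equals an explicit finite linear combination of unary theta functions whose coefficients are the constant terms of $F$ at the cusps. The hypothesis then makes this combination vanish, and this is also exactly what produces the ``up to unary theta functions'' clause at the end of the statement. Your Stokes-style phrasing is morally the same computation but slightly misdescribes where the constant terms enter. For Part~(2), your reduction to Waldspurger via the identification of $\xi_{1/2}\Theta_M$ with the Shintani kernel is the argument used in \cite{ECMock}; the scalar bookkeeping you flag as the main obstacle is indeed carried out there, and the sign/representation dichotomy is handled exactly as you anticipate.
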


One result of the Shimura correspondence between $\xi_{1/2} \vartheta^{1/2}_{\Delta,r} (F)$ and $\xi_0 F$ is that if $\operatorname{sgn}(\Delta)=\epsilon_g,$ where 
$\epsilon_g=\pm 1$ is the eigenvalue of $g$ under the Fricke involution, 
\[g|_2W_N=:\epsilon_g g,\] 
then the lift $\vartheta^{1/2}_{\Delta,r}( F)$ is weakly holomorphic. In this case, it is known that ${L(\xi F,\Delta,1)=0}.$ Equivalently, we can observe that the formulas above show that $\xi_{1/2} \vartheta^{1/2}_{\Delta,r} (F)$ has eigenvalue $-\epsilon_g$ under the Fricke involution despite corresponding to $g$ under the Shimura correspondence. 

If $\vartheta^{1/2}_{\Delta,r} (F)$ is weakly holomorphic, then the algebraicity of the coefficients is tied to the algebraicity of the principal parts. On the other hand, if $L(\xi_0 F,\Delta ,1)\neq 0$, some coefficients may still be algebraic despite the form not being weakly holomorphic. 
\begin{theorem}[{\cite[Theorems 7.6, 7.8]{BruinierOno}}]\label{ThmVanishingL}
Assume the notation above, and suppose $F\in H^g_{0}(N;K,\sigma)$ for some newform $g$, so that $F$ has zero constant terms at all cusps. 
Then the following are equivalent:
\begin{enumerate}
\item The coefficient $\mathbf a_F(\Delta,r;D,s )$ is algebraic.
\item The coefficient $\mathbf a_F(\Delta,r;D,s )$ is in $K$.
\item The product 
$$L(\xi_0 F,\Delta ,1)L'(\xi_0 F, D,1)=0.$$
\end{enumerate}
 \end{theorem}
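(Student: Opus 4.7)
The plan is to treat the two disjuncts in (3) as separate sufficient conditions, using the different structure of the weight $1/2$ lift in each regime, and then to deduce necessity by a period-independence argument.

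First I would dispatch the case $L(\xi_0 F, \Delta, 1) = 0$. By Theorem~\ref{ThmHalfShadows}(\ref{ThmHalfS_L}), the lift $\vartheta^{1/2}_{\Delta,r}(F)$ is then weakly holomorphic. The principal part formula (\ref{EqnPP1}), combined with the hypothesis $F \in H^g_0(N;K,\sigma)$ (so that the principal parts of $F^+$ at every cusp lie in $K[q^{-1}]$, and noting that the genus character values $\leg{\Delta}{d}$ are integral), forces the principal part of $\vartheta^{1/2}_{\Delta,r}(F)$ to be defined over $K$. Theorem~\ref{ThmSturmHalf} then propagates rationality to every non-exceptional coefficient, in particular to $\mathbf{a}_F(\Delta,r;D,s)$; the square-index exception when $\tilde\rho = \rho$ is handled separately via the Serre--Stark unary theta basis, whose coefficients themselves lie in $K$.

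The case $L'(\xi_0 F, D, 1) = 0$ is the substantive one. Here I would decompose $F = F^+ + F^-$ and split the antisymmetrized trace defining $\mathbf{a}_F(\Delta,r;D,s)$ into contributions from the holomorphic and non-holomorphic parts. The $F^+$-piece is a trace of values of a meromorphic modular function with $K$-rational Fourier expansion over CM points of discriminant $\Delta D$; by the theory of singular moduli of the type studied by Zagier, this piece lies in $K$. For the $F^-$-piece, I would apply a Kudla--Millson see-saw identity to rewrite it as a regularized Petersson inner product of $\xi_0 F$ against a weight $2$ theta kernel attached to $D$, then invoke the Bruinier--Yang form of Gross--Zagier to identify this inner product with an algebraic multiple of $L'(\xi_0 F, D, 1)$. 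Vanishing of the derivative kills the $F^-$-contribution and leaves an algebraic result.

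For necessity, I would combine Waldspurger's theorem --- which expresses $L(\xi_0 F, \Delta, 1)$ as the square of an algebraic multiple of a coefficient of the weight $3/2$ Shimura preimage of $g^\sigma$ --- with the Gross--Zagier identification above. Together these exhibit the transcendental part of $\mathbf{a}_F(\Delta, r; D, s)$ as a nonzero algebraic multiple of $\sqrt{L(\xi_0 F, \Delta, 1)} \cdot L'(\xi_0 F, D, 1) / \|g^\sigma\|$. Since $\|g^\sigma\|$ is transcendental over $\overline{\Q}$ and the remaining factors are algebraic, algebraicity of the coefficient forces the $L$-product to vanish. The hard part will be the Gross--Zagier-style identification in the second case --- matching a CM trace of a non-holomorphic weight $0$ harmonic Maass form with a derivative of a Rankin-type $L$-function --- which is the substantive content absorbed from Theorems 7.6 and 7.8 of \cite{BruinierOno} and rests on the full machinery of regularized theta integrals and arithmetic intersection theory on Heegner divisors.
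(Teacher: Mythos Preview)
The paper does not supply its own proof of this statement: Theorem~\ref{ThmVanishingL} is quoted as a citation of \cite[Theorems~7.6, 7.8]{BruinierOno}, with no argument given. So there is nothing in the paper to compare against; you are effectively sketching the Bruinier--Ono proof.

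That said, your sketch has two genuine gaps. First, in the case $L'(\xi_0 F, D, 1)=0$ you propose to split the trace into contributions from $F^+$ and $F^-$ and then argue that the $F^+$-piece is algebraic by ``the theory of singular moduli of the type studied by Zagier.'' But $F^+$ is a mock modular form, not a modular function, and its individual CM values are not algebraic in general; Zagier's theory does not apply to $F^+$ in isolation. Bruinier--Ono do not argue this way. Their route goes through the generalized Borcherds product $\Psi_{D,s}$ (Theorem~\ref{ThmBorcherdsProd} here): the algebraicity of the coefficients on the $D$-square-class is equivalent to the multiplier $\nu$ of $\Psi_{D,s}$ having finite order, and the obstruction to finite order is computed as a height pairing on the Heegner divisor, which Gross--Zagier identifies with $L'(g,D,1)$.

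Second, your necessity argument asserts that $\|g^\sigma\|$ is transcendental over $\overline{\Q}$. This is expected on period-conjecture grounds but is not a theorem, and Bruinier--Ono do not use it. Their argument is again geometric: nonvanishing of the $L$-product forces the Heegner divisor to have infinite order in the Jacobian (Gross--Zagier) and the relevant Shimura-lift coefficient to be nonzero (Waldspurger), which together obstruct the finiteness of the multiplier of $\Psi_{D,s}$ and hence the rationality of the coefficient. No appeal to transcendence of the Petersson norm is needed.
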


 \subsubsection{Lifts for non-fundamental discriminants}
Notice that given $F$ as above, when $D$ and $\Delta$ are both fundamental, then Theorem \ref{ThmTraceCoeffs} shows there is duality between the coefficients of $\vartheta^{1/2}_{\Delta,r}(F)$ and those of $\vartheta^{1/2}_{D,s} (F)$, similar to that seen in Theorem \ref{ThmIP}(\ref{IP_dual}). That is, in the notation of the theorem

\begin{equation*}
\mathbf a_F(\Delta,r;D,s )=-\mathbf b_F(D,s;\Delta,r ).
\end{equation*}

Notice that there are gaps in this duality statement since it requires $\Delta$ and $D$ to both be fundamental. These gaps can be filled. Using the formulas above for the action of the Hecke operators, we find that if $\ell$ is a prime not dividing $N$ and
\[
F_{\ell^m}:=F|_0\left(\hatT_{\ell^m}-\leg{D}{\ell}\hatT_{\ell^{m-1}}\right),
\]
then
\[
\mathbf b_F(D,s;\Delta \ell^{2m},r\ell^{m} ) =\mathbf b_{F_{\ell^m}}(D,s;\Delta,r )=-\mathbf a_{F_{\ell^m}}(\Delta,r;D,s ).
\]
Therefore, if we define 
\begin{equation}\label{EqnHeckeNonFund}
\vartheta^{1/2}_{\Delta\ell^{2m},r\ell^{m}} (F):=\vartheta^{1/2}_{\Delta,r}(F_{\ell_m})=\vartheta^{1/2}_{\Delta,r}|_{1/2,\tilde \rho} \left(\mathbf T_{\ell^m}-\leg{D}{p}\mathbf T_{\ell^{m-1}}\right),
\end{equation}
and define $\mathbf a_F(\Delta\ell^{2m},r\ell^{m};D,s )$ to be its $(|D|,s)$-th coefficient, we have that 
\begin{equation*}
\mathbf a_F(\Delta \ell^{2m},r\ell^m;D,s )=-\mathbf b_F(D,s;\Delta\ell^{2m},r\ell^{m} ).
\end{equation*} 

 If $\ell\mid N$, then notice that 
 \[
\mathbf t_N(\Delta, D \ell^2,h\ell) (F)=\mathbf t_{N/\ell}(\Delta, D,h) (F|_0 \operatorname {Tr_\ell}),
\]
where 
$\operatorname {Tr_\ell}$ is the trace operation defined in \ref{EqnTrace1} sending level $N$ forms to level $N/\ell$ forms. Therefore in this case it is natural to define 
\begin{equation}\label{EqnHeckeNonFund2}
\vartheta^{1/2}_{\Delta\ell^{2m+2},r\ell^{m+1}}(F):=\vartheta^{1/2}_{\Delta,r}F_{\ell_m}
\end{equation}
where 
\[
 F_{\ell_m}:=(F|_0\operatorname {Tr_\ell})|_0\left(\hatT_{\ell^m}-\leg{D}{p}\hatT_{\ell^{m-1}}\right).
\]
Here, $\hatT_n$ represents the level $N/\ell$ operator. 

 We can define $\vartheta^{1/2}_{\Delta n^2,rn} (F)$ similarly for all $n$ by requiring that (\ref{EqnHeckeNonFund}) and (\ref{EqnHeckeNonFund2}) hold as identities for all discriminants $\Delta$, not divisible by $\ell^2$. We define the lifts $\vartheta^{3/2}_{\Delta,r}F$ similarly. Notice that the different normalization in the Hecke operators corresponds to the changing denominators in the formulas in Theorem \ref{ThmTraceCoeffs}.

Using the symmetry between the action of the Hecke operators of weights $1/2$ and $3/2$, we see that 
\begin{equation}\label{EqnDualityHalf}
\mathbf a_F(\Delta,r;D,s )=-\mathbf b_F(D,s;\Delta,r )
\end{equation}
for all pairs of discriminants $\Delta D<0.$

With this notation, we have the following theorem on the algebraicity of the coefficients $\mathbf a_F(\Delta,r;D,s )$ and $\mathbf b_F(D,s;\Delta,r )$, generalizing Theorem \ref{ThmVanishingL}. \begin{theorem}\label{ThmALgebraicCross}
Assume the notation above, and suppose $F\in H^{g}_0(N;K,\sigma)$ for some newform $g\in S_2(N,\C)$, so that the constant terms of $F$ at all cusps are $0$.

If $\mathbf a_F(\Delta,r;D,s )=-\mathbf b_F(D,s;\Delta,r)$ is algebraic, then 
either $\vartheta^{1/2}_{\Delta,r}(F)$ or $\vartheta^{3/2}_{D,s}(F)$ is weakly holomorphic with every coefficient in $K$.

\end{theorem}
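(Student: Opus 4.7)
The plan is to reduce to fundamental discriminants $\Delta, D$ and then split into two cases according to which factor in Theorem \ref{ThmVanishingL} vanishes. The definitions \eqref{EqnHeckeNonFund} and \eqref{EqnHeckeNonFund2} express the non-fundamental-discriminant lifts as Hecke translates (on the half-integer-weight side) of fundamental-discriminant lifts, with rational Hecke eigenvalues. Since these Hecke operators preserve both weak holomorphicity and the field of definition of coefficients, it suffices to prove the theorem when $\Delta$ and $D$ are fundamental.

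With $\Delta, D$ fundamental, the hypothesis that $\mathbf a_F(\Delta, r; D, s)$ is algebraic, combined with Theorem \ref{ThmVanishingL}, forces the product $L(\xi_0 F, \Delta, 1) \cdot L'(\xi_0 F, D, 1) = 0$, so at least one of the two factors vanishes.

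Suppose first that $L(\xi_0 F, \Delta, 1) = 0$. Then Theorem \ref{ThmHalfShadows}(\ref{ThmHalfS_L}) shows that $\vartheta^{1/2}_{\Delta, r}(F)$ is weakly holomorphic, and formula \eqref{EqnPP1} places its principal part in $K$ since the principal parts of $F$ lie in $K$. Moreover, applying Theorem \ref{ThmVanishingL} with $\Delta$ fixed and $D$ ranging over all fundamental discriminants shows that every $(|D'|, s')$-th coefficient $\mathbf a_F(\Delta, r; D', s')$ at a fundamental index $D'$ lies in $K$. The Hecke identities \eqref{EqnHeckeNonFund}, \eqref{EqnHeckeNonFund2} propagate this rationality to non-fundamental indices, covering in particular any square indices that could be problematic under Theorem \ref{ThmSturmHalf} when $\Delta > 0$ and $\tilde \rho = \rho$. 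Theorem \ref{ThmSturmHalf} then concludes that every coefficient of $\vartheta^{1/2}_{\Delta, r}(F)$ lies in $K$.

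Suppose instead that $L'(\xi_0 F, D, 1) = 0$ (and $L(\xi_0 F, \Delta, 1) \neq 0$). Then Theorem \ref{ThmHalfShadows}(\ref{NonHolo1}) shows that $\vartheta^{3/2}_{D, s}(F)$ is weakly holomorphic unconditionally, and formula \eqref{EqnPP2} puts its principal part in $K$. By the duality $\mathbf a_F(\Delta', r'; D, s) = -\mathbf b_F(D, s; \Delta', r')$ of \eqref{EqnDualityHalf} combined with Theorem \ref{ThmVanishingL}, every fundamental-index coefficient of $\vartheta^{3/2}_{D, s}(F)$ lies in $K$, and again the Hecke identities extend this to all indices. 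A weight-$3/2$ analog of Theorem \ref{ThmSturmHalf} then forces every coefficient of $\vartheta^{3/2}_{D, s}(F)$ into $K$. The main obstacle is supplying this $q$-expansion principle, which is not stated in the excerpt; however, because the representation $\overline{\tilde\rho}$ admits no weight-$3/2$ unary theta series, one expects a cleaner version than Theorem \ref{ThmSturmHalf} (without the square-index caveat) to hold, so that finitely many coefficients in $K$ determine the rest.
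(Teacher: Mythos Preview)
Your reduction to fundamental $\Delta$ and $D$ does not work as stated. Passing from $\Delta$ to $\Delta'$ replaces $F$ by a Hecke translate $F_1$, and then passing from $D$ to $D'$ replaces $F_1$ by a further translate $F_2$. The hypothesis becomes the algebraicity of $\mathbf a_{F_2}(\Delta',r';D',s')$, and the fundamental case applied to $F_2$ yields information about $\vartheta^{1/2}_{\Delta',r'}(F_2)$ or $\vartheta^{3/2}_{D',s'}(F_2)$. But $\vartheta^{1/2}_{\Delta',r'}(F_2)=\vartheta^{1/2}_{\Delta,r}(F)|_{1/2}\mathbf A_D$ and $\vartheta^{3/2}_{D',s'}(F_2)=\vartheta^{3/2}_{D,s}(F)|_{3/2}\mathbf A_\Delta$, and the Hecke operators $\mathbf A_D,\mathbf A_\Delta$ are not invertible, so this does not recover the original lifts. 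Worse, the Hecke translate can kill the shadow (if the eigenvalue combination happens to vanish), in which case the fundamental case becomes vacuous. One can patch around these degenerate cases, but the reduction is not the one-line step you present.

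Your second case has an acknowledged gap: you need a weight-$3/2$ analogue of Theorem~\ref{ThmSturmHalf}, and the assertion that $\overline{\tilde\rho}$ admits no weight-$3/2$ unary theta series is unjustified (there certainly exist weight-$3/2$ theta series such as $\sum n q^{n^2}$, and one must check the representation carefully).

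The paper avoids both problems by never reducing $D$. When $D$ is fundamental it invokes \cite[Theorems~7.6,~7.8]{BruinierOno} directly. When $D$ is not fundamental it works on the weight-$3/2$ side but uses a different mechanism: since $\vartheta^{3/2}_{D,s}(F)$ is weakly holomorphic with principal part in $K$, the existence of a rational basis for $M^!_{3/2,\overline{\tilde\rho}}(N;K)$ lets one write $\vartheta^{3/2}_{D,s}(F)=(\text{form with coefficients in }K)+\mathbf h$ for some cusp form $\mathbf h$. Applying $\mathbf T_\ell-a_g(\ell)$ forces $\mathbf h$ to lie in the Shimura preimage of $\C g$, so the transcendental part is controlled by a single scalar. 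Either that scalar is in $K$ (done), or the $(|\Delta|,r)$-th coefficient of $\mathbf h$ vanishes, in which case the same coefficient of $\vartheta^{3/2}_{D',s'}(F)$ is algebraic for \emph{every} $(D',s')$; choosing $D'$ fundamental with $L'(g,D',1)\neq 0$ and reducing $\Delta$ then forces $\vartheta^{1/2}_{\Delta,r}(F)$ to be weakly holomorphic. This sidesteps any Sturm bound in weight $3/2$.
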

 \begin{proof}
If $D$ is fundamental, then this follows from applying \cite[Theorems 7.6 and 7.8]{BruinierOno} to $\vartheta^{1/2}_{\Delta,r} (F)$ for all pairs $(\Delta,r).$ On the other hand, if $D$ is not fundamental then since there exists a rational basis for $M^{!}_{3/2,\overline {\tilde \rho}}(N;K),$ there is some cusp form $\mathbf h\in S_{3/2,\overline {\tilde \rho}}(N;\C)$ such that $\vartheta^{3/2}_{D,s} (F)-\mathbf h$ has coefficients in $K$. Since $F|_0(\hatT_\ell-a_g(\ell))$ is weakly holomorphic, we have that 
\[
\vartheta^{3/2}_{D,s}\left( F|_0(\hatT_\ell-a_g(\ell))\right) =\vartheta^{3/2}_{D,s} (F)|_{3/2,\overline{\tilde \rho}} (\mathbf T_\ell-a_g(\ell))  
\]
has coefficients in $K$, and so we can take $\mathbf h$ so that it corresponds to a multiple of $g$ under the Shimura correspondence. In particular, this means that if $\mathbf h$ has any transcendental coefficients, then $\mathbf b_F(D,s;\Delta,r )$ can only be algebraic if the $(|\Delta|,r)$-th coefficient of $\mathbf h$ vanishes. But then the $(|\Delta|,r)$-th coefficient of $\vartheta^{3/2}_{D',s'} (F)$ must be algebraic for all pairs $(D',s')$. 

In this case, to see that $\vartheta^{1/2}_{\Delta,r} (F)$ is weakly holomorphic, write $\Delta=\Delta'n^2$ with $\Delta'$ fundamental, and take $D'$ to be fundamental with $L'(g,D',1)\neq 0$. Then 
\[
\vartheta^{1/2}_{\Delta,r} (F)=\vartheta^{1/2}_{\Delta',r'} (F^*)
\]
for some $F^*$, as in (\ref{EqnHeckeNonFund2}). By Theorem \ref{ThmHalfShadows}, $\vartheta^{1/2}_{\Delta,r} (F)$ must be weakly holomorphic.
 \end{proof}

\subsubsection{Borcherds lift}

We now consider the generalized Borcherds lift, $\Phi_{\Delta,r}(\mathbf f)$ as described in \cite[Sections 5-7]{BruinierOno}. 
\begin{theorem}[{\cite[Theorem 5.3]{BruinierOno}}]\label{ThmBp1}
Let $D$ be a fundamental discriminant with $s^2\equiv D\pmod{4N}.$ Suppose $\mathbf f \in H_{1/2,\tilde \rho}(N;\C)$ has principal parts at all components defined over $K$, with constant term $a^+_{\mathbf f}(0,0)=0$. Moreover, if $\Delta=1$, let $\mathbf f$ be orthogonal to the cusp forms $S_{1/2,\tilde \rho}(N;\C).$ 
Then for a specific rational number $R_{f,D}$ ($=0$ if $D\neq 1$),
the function
\[
\Phi_{D,s}(\mathbf f)(\tau):=R_{f,D,s} -4\sum_{n\geq 1}\sum_{b \pmod{\Delta}} \leg{D}{b}a^+_{\mathbf f}(|D| n^2,sn) \log|1-q^n\zeta_{D}^b|
\]
is modular for $\Gamma_0(N)$ and is harmonic on the upper half plane apart from logarithmic singularities at certain CM points determined by the principal part of $\mathbf f.$ 
\end{theorem}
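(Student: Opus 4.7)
The plan is to realize $\Phi_{D,s}(\mathbf f)$ as a regularized theta lift against a genus-character twisted Siegel theta kernel, following the template of Borcherds' original singular theta correspondence and its extension by Bruinier to harmonic Maass input. Concretely, I would introduce a theta kernel $\Theta_{D,s}(\tau,z)$ attached to the rational quadratic space $(\Mat_2(\Q), N\cdot\det)$ of signature $(2,1)$ (or equivalently, to the associated lattice with the level $N$ structure), twisted by the genus character $\chi_D$ and supported on the coset class indexed by $s\pmod{2N}$. In the $\tau$-variable this kernel is vector-valued of weight $1/2$ and transforms under the representation $\tilde\rho$ (dual to that of $\mathbf f$), while in the $z$-variable it is invariant under $\Gamma_0(N)$. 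Then define
\[
\Phi_{D,s}(\mathbf f)(z) := \int_{\SL_2(\Z)\backslash\H}^{\operatorname{Reg}} \langle \mathbf f(\tau), \Theta_{D,s}(\tau,z)\rangle\, v^{1/2}\,\frac{du\,dv}{v^2},
\]
with Borcherds/Harvey–Moore regularization to accommodate the exponential growth of the principal part of $\mathbf f$.

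Modularity in $z$ is then automatic from the modularity of $\Theta_{D,s}$ in the $z$-variable, once one checks that the regularization does not destroy it: one cuts off the fundamental domain at height $T$, computes, and takes $T\to\infty$, verifying that the cutoff correction is itself modular. Harmonicity away from CM points follows from the fact that $\Theta_{D,s}$ is annihilated by $\Delta_0$ in the $z$-variable outside the negative-definite locus, which accounts for the poles/logarithmic singularities at the CM points prescribed by the principal part of $\mathbf f$.

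To extract the stated Fourier expansion, the key step is the unfolding argument: split $\Theta_{D,s}$ into a sum over coset representatives according to the standard decomposition of the orthogonal group, then unfold against $\mathbf f$. The constant-term computation produces $R_{f,D,s}$, and the non-constant part reduces, via evaluation of incomplete gamma integrals paired with the non-holomorphic part of $\mathbf f$ and the Koecher sum against the holomorphic part, to the product
\[
\prod_{n\geq 1}\prod_{b\pmod D}(1-q^n\zeta_D^b)^{\chi_D(b)\,a^+_{\mathbf f}(|D|n^2,sn)},
\]
whose logarithm gives the displayed formula. The hypothesis that $\mathbf f$ be orthogonal to $S_{1/2,\rho}(N;\C)$ when $D=1$ is needed precisely to kill the contribution of unary theta functions in the spectral expansion, which would otherwise obstruct convergence/modularity of the regularized integral.

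The main obstacle is the regularization at the cusp of the $\tau$-integral together with the careful tracking of the contribution from the non-holomorphic part $\mathbf f^-$; this is where Bruinier--Ono's analysis (their Theorem 5.3 and the preceding sections) does the heavy lifting, and I would simply cite this, since all the required unfolding and product-expansion identities are established there. The only thing left to verify in our context is the translation of conventions for $\tilde\rho$ versus $\overline{\tilde\rho}$ and the sign of $D$, but these are bookkeeping.
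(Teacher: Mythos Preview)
The paper does not give its own proof of this statement: it is quoted verbatim as \cite[Theorem 5.3]{BruinierOno} and then used as a black box (the only commentary is that $R_{f,D,s}$ comes from the Weyl vector and that some simplifying hypotheses have been imposed). So there is no ``paper's own proof'' to compare against.

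Your sketch is a faithful outline of how Bruinier--Ono actually establish the result: one defines a twisted Siegel theta kernel for the signature $(2,1)$ lattice, forms the regularized theta integral against $\mathbf f$, reads off $\Gamma_0(N)$-invariance and harmonicity from the kernel, and obtains the Fourier/product expansion by the standard unfolding computation; the orthogonality hypothesis when $D=1$ is needed to handle the unary theta contribution. Since you correctly identify that the substantive work (regularization, unfolding, the contribution of $\mathbf f^-$) is carried out in \cite{BruinierOno} and propose to cite it, your proposal is in line with what the paper does---namely, invoke the reference rather than reprove it.
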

The rational number $R_{f,D,s}$ here comes from 
 the Weyl vector. Notice we have included some simplifying assumptions from the original theorem. Remark 13 ii) of \cite{BruinierOno} indicates that $\Phi_{\Delta,r}(\mathbf f)$ has a similar expansion at other cusps. In fact unsurprisingly it turns out that for $\delta\mid N,$
\begin{equation}\label{PhiAL}
\Phi_{\Delta,r}(\mathbf f)|_0W_\delta=\Phi_{\Delta,r}(\mathbf f|_{1/2}\mathbf W_\delta).
\end{equation}
This follows by considering the action of the Atkin--Lehner involution $W_\delta$ on the divisor of $\Phi_{D,s}(\mathbf f)$ (see \cite[Proposition 5.2]{BruinierOno}). The calculation is nearly identical to that needed for (\ref{FrickeCom}). This is also worked out in this context in \cite[(7.3)]{BruinierOno}.

Similarly, for $(n,N)=1$ we have that
\[
\Phi_{D,s}(\mathbf f)|_0\hatT_n=\Phi_{D,s}(\mathbf f|_{1/2}\mathbf T_n).
\]
This is shown as follows. The $q$-expansion of $\Phi_{D,s}(\mathbf f)$ at infinity is obtained using the Taylor expansion 
\[\log (1-x)= \sum_{m\geq 1}\frac{-x^m}{m}.
 \]
For $n=\ell$ a prime, the identity is easily shown using formulas for the action of the operators $\hatT_\ell$ and $\mathbf T_\ell$ respectively. Since $\hatT_n$ and $\mathbf T_n$ obey the same multiplicativity relations, we see commutativity holds for general $n$.

We will also need a closely related form
, the Borcherds product
\[
\Psi_{D,s}(\mathbf f)(\tau):= \prod_{n\geq 1}\prod_{b \pmod{D}} (1-q^n\zeta_{D}^b)^{\leg{D}{b}a^+_{\mathbf f}(|D| n^2,rn) }
\]
which satisfies 
\[
\Phi_{D,s}(\mathbf f)=-4\log|\Psi_{D,s}(\mathbf f)|.
\]
Here, as usual, by $a^b$ we mean $\operatorname{e}^{b \log a},$ taking the principal branch of the logarithm. 
\begin{theorem}[{\cite[Theorems 6.1, 6.2]{BruinierOno}}]\label{ThmBorcherdsProd}
Assume the notation above and the hypotheses of Theorem \ref{ThmBp1}. Then the function
$\Psi_{D,s}(\mathbf f)(\tau)$ converges for all $\tau$ with $y$ sufficiently large, and has a meromorphic continuation to all of $\H.$
Moreover, it satisfies 
\[
\Psi_{D,s}(\mathbf f)|_0\gamma=\nu(\gamma)\Psi_{D,s}(\mathbf f)
\]
for every $\gamma\in \Gamma_0(N)$, with $\nu$ a character for $\Gamma_0(N)$ with $|\nu(\gamma)|=1.$ 

If the coefficients $a^+_{\mathbf f}(m,r)$ of $\mathbf f$ are in $\Z$ for all $h\pmod{2N}$ and $m\leq 0$, then the order of $\nu$ is finite if and only if the coefficients $a^+_{\mathbf f}(|D| n^2,sn)$ are rational for all $n$.
\end{theorem}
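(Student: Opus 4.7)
The plan is to deduce all three parts from the identity $-4\log|\Psi_{D,s}(\mathbf f)| = \Phi_{D,s}(\mathbf f)$ together with the properties of $\Phi_{D,s}(\mathbf f)$ already granted by Theorem \ref{ThmBp1}. First I would establish absolute convergence for $y$ large: since $\mathbf f$ is a harmonic Maass form, its holomorphic-part coefficients $a^+_{\mathbf f}(|D|n^2,sn)$ grow at most polynomially in $n$, while $\log(1-q^n\zeta_D^b) = O(e^{-2\pi n y})$ uniformly for $b \pmod{D}$. The sum over $b$ contributes a factor $|D|$, so the series of logarithms converges absolutely for $y \geq Y_0$ sufficiently large, giving absolute convergence of the product itself there.

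Next I would extend $\Psi_{D,s}(\mathbf f)$ meromorphically to all of $\H$. On any simply connected $U \subset \H$ avoiding the CM singularities of $\Phi_{D,s}(\mathbf f)$, the function $-\Phi_{D,s}(\mathbf f)/4$ is harmonic, hence the real part of a holomorphic $\log \Psi_U$, unique up to an imaginary additive constant, whose exponential agrees with the convergent Borcherds product on the overlap. Near a CM singularity $\tau_0$, $\Phi_{D,s}(\mathbf f) = -c\log|\tau - \tau_0|^2 + (\text{harmonic})$ for some non-negative integer $c$ determined by the principal part of $\mathbf f$ (read off from Theorem \ref{ThmBp1}), so $\Psi_{D,s}(\mathbf f)$ extends meromorphically across $\tau_0$ with zero or pole of order $c$. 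The ambiguity in choosing $\log \Psi_U$ propagates as a multiplicative constant of modulus $1$, so monodromy around closed loops cannot obstruct a global meromorphic function on $\H$. \emph{The main obstacle is showing that the local singular orders $c$ are integers}; this is the deep content, requiring a careful asymptotic analysis of the regularized theta integral defining $\Phi_{D,s}(\mathbf f)$ near each CM point, tracking how the logarithmic singularities match integer multiplicities coming from coefficients of the principal part of $\mathbf f$.

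For the transformation law, $\Gamma_0(N)$-invariance of $\Phi_{D,s}(\mathbf f)$ yields $|\Psi_{D,s}(\mathbf f)\circ \gamma| = |\Psi_{D,s}(\mathbf f)|$ for every $\gamma \in \Gamma_0(N)$. The meromorphic ratio $(\Psi_{D,s}(\mathbf f)\circ \gamma)/\Psi_{D,s}(\mathbf f)$ therefore has empty divisor and constant modulus, so it equals a scalar $\nu(\gamma)$ with $|\nu(\gamma)|=1$; the usual cocycle calculation forces $\nu$ to be a group homomorphism.

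Finally, for the rationality dichotomy I would argue as follows. Suppose every $a^+_{\mathbf f}(|D|n^2,sn)$ lies in $\tfrac{1}{M}\Z$. Then $\Psi_{D,s}(\mathbf f)^M$ has integer exponents, and for each $n$ the inner product $\prod_{b \pmod{D}}(1-q^n\zeta_D^b)^{\leg{D}{b}\, M a^+_{\mathbf f}(|D|n^2,sn)}$ is fixed by $\Gal(\Q(\zeta_D)/\Q)$, since replacing $\zeta_D$ by $\zeta_D^t$ with $(t,D)=1$ permutes the factors while producing a compensating factor $\leg{D}{t}$ which squares to $1$. Hence the $q$-expansion of $\Psi_{D,s}(\mathbf f)^M$ at $\infty$ lies in $\Q((q))$, and together with the transformation law and invariance of absolute value this forces $\nu^M = 1$. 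Conversely, if $\nu$ has finite order $M$, then $\Psi_{D,s}(\mathbf f)^M$ is a meromorphic modular function on $X_0(N)$; reading off its $q$-expansion from the Borcherds product and matching coefficients recursively recovers each $a^+_{\mathbf f}(|D|n^2,sn)$ as an element of $\Q$.
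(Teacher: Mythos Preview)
The paper does not prove this theorem at all: it is quoted verbatim from Bruinier--Ono \cite[Theorems 6.1, 6.2]{BruinierOno} and used as a black box in the proof of Lemma~\ref{LemSqrDenoms}. There is therefore no ``paper's own proof'' to compare your proposal against.

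That said, your sketch is a reasonable outline of the standard approach in \cite{BruinierOno}, with one genuine gap. In the rationality dichotomy, your forward direction (rational square-class coefficients $\Rightarrow$ finite order of $\nu$) asserts that once $\Psi_{D,s}(\mathbf f)^M$ has a rational $q$-expansion at $\infty$, the transformation law together with $|\nu|=1$ forces $\nu^M=1$. This step is not justified: a meromorphic function on $\H$ with rational $q$-expansion that transforms by a unitary character of $\Gamma_0(N)$ need not have a character of finite order without further input. In Bruinier--Ono the argument instead passes through the explicit divisor of $\Psi_{D,s}(\mathbf f)$ (supported on CM points with integer multiplicities read off from the principal part of $\mathbf f$) and the structure of the group of modular units, which is what pins down the finiteness of $\nu$. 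Your converse direction is closer to correct, but ``$\Psi^M$ is a meromorphic modular function on $X_0(N)$'' does not by itself give rational $q$-coefficients either; one again needs control of the divisor to identify $\Psi^M$ with an explicit rational function in $j$ (or a quotient of Hilbert class polynomials, as the present paper does in the proof of Lemma~\ref{LemSqrDenoms}).
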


\subsection{Proofs of Proposition \ref{PropTransValsInt} and Lemma \ref{LemSqrDenoms}}
With this background on the trace lifts and Borcherds lifts we can now 
prove Proposition \ref{PropTransValsInt} and Lemma \ref{LemSqrDenoms}.
 
\begin{proof}[Proof of Proposition \ref{PropTransValsInt}]
If $F$ has no constant term at any cusp, then (\ref{EqnVanL1}) follows immediately from Theorem \ref{ThmVanishingL}. Similarly in this case if we fix $r,s$ so that 
$rs\equiv h\pmod {2N}$, then
the 
proof of Theorem \ref{ThmALgebraicCross} shows that there is some cusp form $\mathbf g$ corresponding to a multiple of $g$ under the Shimura correspondence so that 
$\vartheta^{3/2}_{D,s} (F)-\mathbf g$ has coefficients in $K$. 
The proposition follows.

If $F$ has constant terms at cusps, then we may subtract off a weakly holomorphic form with matching constant terms, and with coefficients in $K$. The twisted trace of CM values of this weakly holomorphic modular form must itself be in $K$, so the proposition holds in this case as well. 
\end{proof}


\begin{proof}[Proof of Lemma \ref{LemSqrDenoms}]
If the constant term $a_{\mathbf f}^+(0,0)$ of $\mathbf f$ is not $0$, we may subtract a theta series to make it so, without affecting the boundedness of the denominators on any square class. 
 For each $\sigma\in \Gal(K/\Q)$, let $\mathbf f^\sigma\in H_{1/2,\tilde \rho}(N;\C)$ have principal part defined by acting on each coefficient of the principal part of $\mathbf f$ by $\sigma$. This form is uniquely defined up to the possible addition of a cuspidal theta function. This will only concern us in the case $D=1$ since the theta series must have non-zero coefficients supported only on powers of $q^{\frac{1}{4N}}$ with square exponents. In that case, we require that each $\mathbf f^\sigma$ is also orthogonal to cusp forms. Then we have that 
$$a^+_{\mathbf f^\sigma}(1,1)=\left( a^+_{\mathbf f}(1,1)\right)^\sigma.$$ 

We will need the trace function
\[
\mathbf F_0 =\sum_\sigma \mathbf f^\sigma.
\]
By \cite[Theorem 5.5]{BruinierOno}, the coefficients $a^+_{\mathbf F}(|D|n^2,sn)$ are rational for all $n\in \Z.$

For each $\sigma\in \Gal(K/\Q),$ let $\Psi^\sigma:=\Psi_{s,D}(\mathbf f^\sigma),$ and let 
\[
 \Psi:=\Psi_{s,D}(\mathbf F_0)=\prod_{\sigma}\Psi^\sigma.
\]
Then by Theorem \ref{ThmBorcherdsProd}, there is some power $M$ so that $\widehat \Psi^M$ is a meromorphic modular function for $\Gamma_0(N),$ and so $\widehat \Psi^M$ has bounded denominators. We can calculate the  expansion of $\widehat \Psi^M|_0\gamma$ for any $\gamma\in \SL_2(\Z)$ using the factorization in (\ref{prodH}) and (\ref{PhiAL}). This allows us to calculate the function 
\[
\widehat \Psi:= \prod_{\gamma\in \Gamma_0(N)\backslash \SL_2(\Z)}\Psi^M|_0\gamma,
\]
 which has level $1$, and is therefore a rational function in $j$. In fact this function can be worked out explicitly following section 8 of \cite{BruinierOno} by considering its divisor. We find that $\widehat \Psi$ is a quotient of twisted Hilbert class functions, as defined by Zagier in \cite[Equation (2.2)]{ZagierTraces} as follows.  If $D$ and $\Delta$ are discriminants with $D\Delta<0$, then the twisted Hilbert class functions are defined by
 \[
\mathcal H_{\Delta,D}(\tau)= \prod_{Q\in\mathcal Q_{N,\Delta D}/ \SL_2(\Z)} (j(\tau)-j(\tau_Q))^{\frac{\chi_{\Delta}(Q)}{\omega_Q}}.
 \]
 These 
  functions have coefficients which are algebraic integers in $\Q(\sqrt{D})$. Moreover their inverses are their respective images under the non-trivial Galois element in $\Gal(\Q(\sqrt{D})/\Q)$. We refer the interested reader to section 8 of \cite{BruinierOno} for more details. For our purposes, it suffices to know that the coefficients of $\widehat\Psi$ are all algebraic integers. 
 
  Notice that the $q$-expansion of $ \Psi^M|_0 \gamma(q)$ for every $\gamma$ begins $1+O(q^{1/N}),$ and so given a prime $\mathfrak P$ of $\Q(\zeta_N),$ we cannot have that $\widehat \Psi^M|_0 \gamma(q)\equiv 0\pmod{\mathfrak P}.$ Therefore for each such $\gamma$, there is some integral element $\pi_\gamma$ of $\Q(\zeta_N)$ so that $\pi_\gamma\widehat \Psi^M|_0 \gamma$ has algebraic integer coefficients, but $\pi_\gamma\widehat \Psi^M|_0 \gamma(q) \not \equiv 0\pmod {\mathfrak P}.$ Then
 \[
\prod_{\gamma\in \Gamma_0(N)\backslash \SL_2(\Z)}\pi_\gamma \Psi^M|_0\gamma(q)= \widehat \Psi(q) \prod_\gamma \pi_\gamma \neq 0 \pmod {\mathfrak P}.
 \]
But since $\widehat \Psi(q)$ already has algebraic integral coefficients, we have that $\prod_\gamma \pi_\gamma$ is not divisible by $\mathfrak P$. Since $\mathfrak P$ was arbitrary,  $\Psi_0^M|_0 \gamma(q)$ has no denominators for any $\gamma$.
 
Expanding the binomial and simplifying the resulting 
 Gauss sums, we find
 \[
 \prod_{b \pmod{D}} (1-q^n\zeta_{D}^b)^{M\leg{D}{b}a^+_{\mathbf F_0}(|D|n^2,sn) }= 1- \operatorname{\sgn}(D)\sqrt{D}M a^+_{\mathbf F_0}(|D|n^2,sn)q^n +O(q^{2n}).
 \]
An induction argument on the coefficients of $\Psi^M$ shows that $\sqrt{D}M a^+_{\mathbf F_0}(|D|n^2,sn)$ is integral. Since $D$ is fundamental, the coefficients $M a^+_{\mathbf F_0}(|D|n^2,sn)$ are integers. 

This suffices to prove the theorem when $K=\Q$. For general $K$ we must continue a bit further. Suppose $K=\Q(\alpha)$ for some algebraic integer $\alpha$, and let $d$ be the degree of $K$.
 For $1\leq m<d$, define
 \[
\mathbf F_m =\sum_\sigma  (\alpha^m)^\sigma\mathbf f^\sigma.
\]
Then the coefficients in the principal part of $\mathbf F_m$ are all in $\Z$, and following the argument above, we see that there is some bound $M_m\in \Z$ so that  for all $n\in \Z$ the coefficients $M_ma^+_{\mathbf F_m}(|D|n^2,sn)$ are integers.
 
 The relation connecting the series $\mathbf f^\sigma$ and $\mathbf F_m$ is a linear transformation. Let $\mathbf M$ be the $d\times d$ matrix with row indexed by integers $m\in [0,d-1]$ and columns indexed by elements $\sigma$ of the Galois group $\Gal(K/\Q),$ such that the $(m,\sigma)$-th entry of $\mathbf M$ is given by $\sigma(\alpha^m).$ Then 
 \[
 \mathbf M \cdot (\mathbf f^\sigma)_{\sigma}=(\mathbf F_m)_m.
 \]
 The matrix $\mathbf M$ is invertible. Its left kernel is fixed by any Galois action, but any non-trivial rational relation between rows would easily translate into a polynomial over $\Q$ of degree less then $d$ satisfied by $\alpha$.
 
 Since the $\mathbf F_m$ all have rational coefficients with bounded denominators  on the given square class, the series $\mathbf f^\sigma$ can only acquire additional denominators coming from $\mathbf M^{-1}.$ The lemma follows. 
\end{proof}

\section{ The $\frakp$-adic trace lifts} \label{SecPLifts}
We will extend the trace lifts $\vartheta^{1/2}_{\Delta,r}$ and $\vartheta^{3/2}_{D, s}$ to $p$-adic harmonic Maass forms as follows. Since the lifts are linear, we will, as usual, consider the case that ${F^\sigma\in H^g_0(N;K,\sigma)}$ for some newform $g$ with $q$-expansion $\sum_{n\geq 1}b_nq^n.$ 
For primes $\frakp$ of $K$ not dividing $N$, we can adapt the appropriate lifts to the $\frakp$-adic harmonic Maass forms. However we consider the images of these lifts only in terms of $p$-adic $q$-series. To the author's knowledge, the literature does not contain a sufficiently robust theory of half-integer weight $p$-adic modular forms on which to build a theory of half-integer weight $p$-adic harmonic Maass forms. We leave this as a point for future exploration. 

CM points are necessarily in the integral locus. Recall that in this region $F^\frakp$ is defined by the $\frakp$-adic limit of the action of certain Hecke operators. Set
\begin{equation}\label{APrime}
A'_n=\begin{cases} A_n & v_p(\beta_g)<1/2\\
\frac12(A_n+\overline A_n) & v_p(\beta_g)=1/2,
\end{cases}
\end{equation}
where $A_n$ is defined in (\ref{prodH}). Then for every $E$ in the integral locus of $\mathcal E(N;K)$, we have that as a $\frakp$-adic limit,
\[
F^\frakp(E)=\lim_{n\to \infty} F^\sigma|_0A'_n(E).
\]

Since the Hecke operators commute with the lifts, we define 
 \begin{equation}\label{DefThetap}
 \vartheta^\kappa_{*,*}(F^\frakp)(q):=\lim_{n\to\infty}\vartheta^\kappa_{*,*}(F^\sigma|_0A'_n)(q)=\lim_{n\to\infty}\vartheta^\kappa_{*,*}(F^\sigma)|_{\kappa,\tilde \rho}\mathbf{A}'_n (q)
 \end{equation}
 whenever the limit converges $\frakp$-adically and coefficient-wise. Here $\vartheta^\kappa_{*,*}=\vartheta^{1/2}_{\Delta,r}$ or $\vartheta^{3/2}_{D, s}$, and $\mathbf A'_n$ is defined similar to $A'_n$, replacing the weight $0$ operators with the corresponding half-integer weight operators. The lift $\vartheta^\kappa_{*,*}$ extends to all of $H_0(N;K_\frakp)$ by linearity.
 
 It turns out that these lifts always converge and their coefficients are given by traces over a CM elliptic curve, as in the Archimedean case. Given a positive definite integer binary quadratic form $Q$, let $E_Q$ be the associated CM elliptic curve, and let $K^{\Delta,r}$  be the minimal Galois extension of $K$ with $E_Q\in \mathcal E(N;K^{\Delta,r})$ for each $Q\in\mathcal Q_{N,\Delta,r}.$ 
The function $F^\frakp$ extends to a function over $K_\frakp^{\Delta,r}$, which is unique, up to the action of the Galois group $\Gal(K^{\Delta,r}_\frakp/K_\frakp)$. If we modify equation (\ref{DefCMTrace}) to be
\begin{equation}\label{DefCMTrace2}
\mathbf t_{N}(\Delta,D,h) ~ F=\sum_{Q\in\mathcal Q_{N,\Delta D,h}/ \Gamma_0(N)} \frac{\chi_{\Delta}(Q)}{\omega_Q}F(E_Q),
\end{equation}
then this similarly modifies the definition of $\widetilde {\mathbf t_{N}}(\Delta,D,h) ( F^\frakp)$ in (\ref{eqnIntroTrace}). 
The modified $\widetilde {\mathbf t_{N}}(\Delta,D,h) ( F^\frakp)$ is well-defined and yields values in $K_\frakp$. Here we have used the fact that the set 
\[
\{E_Q \ : Q\in \mathcal Q_{N,\Delta D,h}/ \Gamma_0(N)\}
\] 
is a union of complete orbits of elliptic curves under $\Gal(\Q^{\Delta,h}/\Q)$, and the definition of the twisted trace $ \widetilde{\mathbf t}_{N}(\Delta,D,h)$ in (\ref{eqnIntroTrace})  ensures it is invariant under $\Gal(K^{\Delta,h}_\frakp/K_\frakp)$. 
 
 With this modified definition of the modular trace, we have the following theorem.
 \begin{theorem}\label{ThmThetaConv}
Let $F^\frakp\in H_0(N;K_\frakp),$ and let the lifts $ \vartheta^\kappa_{*,*} (F^\frakp)(q)$ be defined as above. Then the lift  $ \vartheta^\kappa_{*,*} (F^\frakp)(q)$ converges $\frakp$-adically and coefficient-wise. Moreover the coefficients of $ \vartheta^\kappa_{*,*} (F^\frakp)(q)$ agree with the formulas given Theorem \ref{ThmTraceCoeffs}. 
  \end{theorem}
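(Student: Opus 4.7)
The plan is to reduce, via the decomposition (\ref{eqn_HMF_decomp}) and linearity, to the isotypical case $F^\sigma\in H^g_0(N;K,\sigma)$ for a fixed newform $g$. Since the trace lifts commute with the extended Hecke algebra of weight $0$ (translated into the half-integer weight Hecke action of Section \ref{SecHalfWtHecke}), I would write
\[
\vartheta^{\kappa}_{*,*}(F^\sigma|_0 A'_\nu) \;=\; \vartheta^{\kappa}_{*,*}(F^\sigma)|_{\kappa,\tilde\rho}\mathbf{A}'_\nu,
\]
a classical half-integer weight harmonic Maass form whose coefficients are given by Theorem \ref{ThmTraceCoeffs}. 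The goal is then to show that, coefficient by coefficient, these stabilize $\frakp$-adically as $\nu\to\infty$ to the formulas of Theorem \ref{ThmTraceCoeffs} applied formally to $F^\frakp$.

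For coefficients outside the principal part, I would use Theorem \ref{ThmTraceCoeffs} to express each as a finite twisted trace $\widetilde{\mathbf t}_N(\Delta,D,h)(F^\sigma|_0 A'_\nu)$ over $Q\in \mathcal Q_{N,\Delta D,h}/\Gamma_0(N)$. Each CM point $\tau_Q$ corresponds to an elliptic curve $E_Q$ whose $j$-invariant is an algebraic integer; after scalar extension to the finite Galois extension $K^{\Delta,h}/K$, the curve $E_Q$ lies in the $\frakp$-integral locus of $Y_0(N)(K^{\Delta,h}_\frakp)$. The second regularized convergence of Lemma \ref{RegCon2}, which is the very mechanism defining $F^\frakp$, then yields $(F^\sigma|_0 A'_\nu)(E_Q)\to F^\frakp(E_Q)$ $\frakp$-adically. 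Since the trace is a finite sum, it converges term by term to $\widetilde{\mathbf t}_N(\Delta,D,h)(F^\frakp)$ as defined in (\ref{DefCMTrace2}), and Galois invariance under $\Gal(K^{\Delta,h}_\frakp/K_\frakp)$ places the result in $K_\frakp$.

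For the principal-part coefficients, I would apply the formulas (\ref{EqnPP1}) and (\ref{EqnPP2}) to $F^\sigma|_0 A'_\nu$. Any fixed target coefficient of $\vartheta^{\kappa}_{*,*}(F^\sigma|_0 A'_\nu)$ at a prescribed negative power of $q$ receives contributions only from finitely many triples $(\delta,n,d)$ (those with $n^2|\Delta|/(4Nd^2)$ equal to the prescribed exponent), and each contribution is proportional to the principal-part coefficient of $F^\sigma|_0 A'_\nu$ at cusp $\delta$ with $q$-exponent $n$. By Theorem \ref{ThmPhmf1}(\ref{ThmFn_qexp}) and the coefficient-wise convergence built into the construction of $F^\frakp$, each such coefficient converges $\frakp$-adically to the $n$-th coefficient of $F^\frakp|_0 W_\delta$; summing the finitely many stabilized contributions yields formulas (\ref{EqnPP1})--(\ref{EqnPP2}) applied to the principal part of $F^\frakp$.

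The main obstacle I anticipate lies in controlling the growing pole order of the principal parts of $F^\sigma|_0 A'_\nu$: the high-index Hecke operators entering (\ref{DefBn}) introduce new principal-part coefficients at exponents of order $p^\nu$, which could a priori contaminate the target lift coefficient for some range of $\nu$. The factorization (\ref{prodH}) shows these excess contributions are scaled by $\beta^{-\nu-d}/(1-\beta^{-d})$ against Hecke-transformed principal parts, and the $\frakp$-adic smallness furnished by Lemma \ref{HeckeLimit} (which is precisely what drives the convergence of $F_\nu$ to $F^\frakp$ in the construction) ensures that, after passing to the $\frakp$-adic limit, only the stable part inherited from the identity piece of $A'_\nu$ survives in the target coefficient. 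Once this is verified, the coefficients of $\vartheta^{\kappa}_{*,*}(F^\sigma|_0 A'_\nu)$ converge in the required formulas, matching those of Theorem \ref{ThmTraceCoeffs} applied to $F^\frakp$ and completing the proof.
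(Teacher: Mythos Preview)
Your argument for the positive-index coefficients is correct and, in fact, slightly more direct than the paper's presentation: once you know each $F^\sigma|_0 A'_\nu$ is weakly holomorphic, Theorem~\ref{ThmTraceCoeffs} expresses the $(|D|,s)$-th coefficient as a finite twisted trace over CM points $E_Q$, each of which lies in the $\frakp$-integral locus, and the very construction of $F^\frakp$ gives $F_\nu(E_Q)\to F^\frakp(E_Q)$. This simultaneously proves convergence and identifies the limit, for both $\kappa=1/2$ and $\kappa=3/2$.

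The gap is in your treatment of the principal-part coefficients. You invoke Lemma~\ref{HeckeLimit} to claim the excess contributions from the growing poles of $F^\sigma|_0 A'_\nu$ die $\frakp$-adically, but Lemma~\ref{HeckeLimit} only controls values on the integral locus and congruences for $\D f_n$; it says nothing about the principal-part coefficients of $F_\nu$, which in fact carry factors of $\beta^{-\nu}$ and do not individually tend to zero. What is actually needed is the half-integer weight statement Proposition~\ref{PropDiffHeckeHalf}: that $\vartheta^\kappa_{*,*}(H^\sigma)|_{\kappa,\tilde\rho}(\mathbf T_{p^n}-\mathbf T_{p^{n+2}})\to 0$ coefficient-wise. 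Combined with (\ref{DiffBn}) and (\ref{prodH}) on the half-integer side, this yields convergence of every coefficient of $\vartheta^\kappa_{*,*}(F^\sigma)|_{\kappa,\tilde\rho}\mathbf A'_\nu$, principal part included, at a rate that beats the $\beta^{-\nu}$ growth since $v_p(\beta)\le 1/2$.

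For $\kappa=1/2$ this proposition is indeed the direct analogue of Lemma~\ref{HeckeLimit} (the factor $a^{2-2k}=a$ in (\ref{EqnTHalf2}) supplies the $p$-power), so your instinct is right in spirit. But for $\kappa=3/2$ the formula (\ref{EqnTHalf1}) has $c^{2k-2}=c$ rather than $a$, and a naive argument fails: Proposition~\ref{PropDiffHeckeHalf}(\ref{PDHH_3/2}) is proved by passing through the duality (\ref{EqnDualityHalf}) to weight $1/2$ and then appealing to the bounded-denominator result Lemma~\ref{LemSqrDenoms}, which in turn rests on the generalized Borcherds product of Theorem~\ref{ThmBorcherdsProd}. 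None of this is captured by Lemma~\ref{HeckeLimit}, and it is precisely here that the paper's proof does real work that your proposal omits.
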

 
 \begin{remark}
 As noted earlier, Theorem \ref{ThmCorrespondence} (\ref{ThmCorr_AlphaTrace}) will follow as a corollary and gives a (weak) analog of Theorem \ref{ThmVanishingL} in this setting. Moreover, combined with part Theorem \ref{ThmIntegrality} (\ref{ThmInt_val}), this gives explicit bounds on powers of primes allowed to appear in denominators of algebraic coefficients of half-integer weight harmonic Maass forms. 
 \end{remark}
 
 The proof of this Theorem relies on the following proposition which shows that, as in the integral weight case with Lemma \ref{VanishingHecke}, the difference between $p$-th power Hecke operators on certain forms converges $\frakp$-adically and coefficient-wise to $0$. This is nontrivial in the weight $3/2$ case, and requires Lemma \ref{LemSqrDenoms}. 
 \begin{proposition}\label{PropDiffHeckeHalf}
Suppose $\mathbf f\in M^!_{\kappa,\tilde \rho}(N;K)$ with $\kappa=1/2$ or $3/2$. 
\begin{enumerate}
\item If $\kappa=1/2,$ then as $n$ increases, the sequence of $q$-series $\left(f|_{k}(\mathbf T_{p^n}-\mathbf T_{p^{n+2}})(q)\right)_{n\in \N}$ converges coefficient-wise $p$-adically to $0$. \label{PDHH_1/2}
\item If $g\in S_2(N;\C)$ is a newform and $\mathbf f=\vartheta^{3/2}_{D,s} F$ for some $F\in H^g_0(N;\C)$, then 
 the sequence of $q$-series $\left(\mathbf f|_{3/2,{\tilde\rho}}(\mathbf T_{p^n}-\mathbf T_{p^{n+2}})(q)\right)_{n\in \N}$ converges coefficient-wise $p$-adically to $0$.  
  \label{PDHH_3/2}
\end{enumerate}
\end{proposition}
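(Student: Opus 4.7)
The plan is to establish the two parts by quite different arguments, each reducing ultimately to Lemma~\ref{HeckeLimit} together with a denominator bound.

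For part (1), I would first expand $\mathbf{T}_{p^n}$ via the decomposition in (\ref{EqnTHalf2}). Since $\kappa=1/2$, the prefactor attached to $\mathbf{U}_a\mathbf{S}_b\mathbf{V}_c$ with $abc=p^n$ is $a=p^i$, while the contribution of $\mathbf{S}_{p^j}$ at coefficient position $(M,s)$ is $\leg{M}{p}^j$, whose nonzero values have order dividing two. A direct computation will show that at any position $(M,s)$ with $p\nmid M$, the ``overlapping'' $(i,j,l)$-terms in $\mathbf{T}_{p^{n+2}}$ and $\mathbf{T}_{p^n}$ cancel in the difference, leaving only the ``new'' terms indexed by $i\in\{n+1,n+2\}$ with explicit prefactors $p^{n+1}$ and $p^{n+2}$. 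The case $p\mid M$ reduces similarly, since then $\leg{M}{p}^j=0$ forces $j=0$. I would next split $\mathbf{f}$ as a sum $\mathbf{f}_0+\mathbf{f}_1$ with $\mathbf{f}_0$ orthogonal to $S_{1/2,\tilde\rho}(N;\C)$ and $\mathbf{f}_1$ a combination of unary theta series (using Serre--Stark in the case $\tilde\rho=\rho$; the case $\tilde\rho=\bar\rho$ is trivial). Lemma~\ref{LemSqrDenoms} bounds the denominators of $\mathbf{f}_0$ uniformly on each square class, while the cuspidal piece $\mathbf{f}_1$ is handled from the explicit Hecke eigenvalues of unary thetas. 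Multiplying bounded coefficients by $p^{n+1}$ then forces coefficient-wise $\frakp$-adic convergence to zero.

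For part (2), my plan is to use the Hecke equivariance of the theta lift, $\mathbf{f}|_{3/2,\tilde\rho}\mathbf{T}_{p^n}=\vartheta^{3/2}_{D,s}(F|_0\hat{T}_{p^n})$. Combined with Theorem~\ref{ThmTraceCoeffs} and the extension (\ref{EqnHeckeNonFund}) to non-fundamental discriminants, this identifies each $(|\Delta|,r)$-th coefficient of $\mathbf{f}|_{3/2,\tilde\rho}\mathbf{T}_{p^n}$ with a twisted modular trace of the form $\widetilde{\mathbf{t}}_N(D,\Delta_0,h)\bigl(F|_0(\hat{T}_{p^{n+m}}-\leg{D}{p}\hat{T}_{p^{n+m-1}})\bigr)$, where $\Delta_0$ is the fundamental part of $\Delta$ and $m$ is read off its $p$-power part. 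This is a finite weighted sum of values of $F|_0\hat{T}_{p^{n+*}}$ at the CM points $\tau_Q$ of discriminant $D\Delta_0$. Each such $\tau_Q$ lies in the ring class field of an order in $\Q(\sqrt{D\Delta_0})$, whose completion at any place above $\frakp$ contributes at most a quadratic extension of residue fields over $\O_K/\frakp$. Applying Lemma~\ref{HeckeLimit}(2) at each $\tau_Q$ over this quadratic extension yields $v_p\bigl(F|_0\hat{T}_{p^{n+2}}(\tau_Q)-F|_0\hat{T}_{p^n}(\tau_Q)\bigr)\geq n+c_e$. Summing over the finitely many CM points contributing to the trace---and using Galois invariance to ensure the sum lies in $K_\frakp$---then yields the desired $\frakp$-adic vanishing.

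The main obstacle I anticipate is the residue-field claim underlying part (2): I need to verify carefully that regardless of the splitting of $p$ in $\Q(\sqrt{D\Delta_0})$, the ramification behavior of $K$ at $\frakp$, or the degree of the ring class field, the residue field of $\tau_Q$ at a place above $\frakp$ has degree at most two over $\O_K/\frakp$, so that Lemma~\ref{HeckeLimit} genuinely produces the required period two rather than the larger period $d$ or $2d$ that a naive application would give. A secondary sticking point in part (1) is meeting the orthogonality hypothesis of Lemma~\ref{LemSqrDenoms}, which requires splitting off the cusp-form component with $K$-rational data; Serre--Stark together with the explicit action of the Hecke algebra on unary thetas makes this tractable, but demands care with denominators arising from the orthogonal projection.
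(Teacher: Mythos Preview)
Your argument for part (1) is essentially the paper's: expand via (\ref{EqnTHalf2}), observe the cancellation between $\mathbf T_{p^n}$ and $\mathbf T_{p^{n+2}}$, and conclude from the growing power of $p$ on the surviving $\mathbf U$-terms. However, your detour through Lemma \ref{LemSqrDenoms} and a Serre--Stark decomposition is unnecessary here: the hypothesis is $\mathbf f\in M^!_{1/2,\tilde\rho}(N;K)$, so $\mathbf f$ is weakly holomorphic and its coefficients already have bounded denominators. No orthogonal projection is needed.

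Your approach to part (2), by contrast, has a genuine gap. You propose to apply Lemma \ref{HeckeLimit}(2) to the values $F|_0\hatT_{p^n}(\tau_Q)$ at CM points, but that lemma is stated and proved only for $f\in M^!_0(N;\O_\frakp)$, i.e.\ for weakly holomorphic functions with $\frakp$-integral coefficients. The input $F$ here lies in $H^g_0(N;\C)$ and is a genuine harmonic Maass form; neither $F$ nor $F|_0(\hatT_{p^n}-\hatT_{p^{n+2}})$ is weakly holomorphic (the latter has shadow $(a_g(p^n)-a_g(p^{n+2}))\cdot\xi_0F\neq 0$ in general), so the congruence machinery of Lemma \ref{HeckeLimit} is simply unavailable. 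Separately, your residue-degree claim is false: when $p$ splits in $\Q(\sqrt{D\Delta_0})$ the CM points have ordinary reduction, and the residue degree at a place of the ring class field above $p$ is the order of Frobenius in the ring class group, which can be arbitrarily large. So even granting a weakly holomorphic input, the period-two conclusion you need from Lemma \ref{HeckeLimit}(2) would not follow.

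The paper sidesteps both issues by invoking the duality (\ref{EqnDualityHalf}): the $(|\Delta|,r)$-th coefficient of $\vartheta^{3/2}_{D,s}(F)|_{3/2}(\mathbf T_{p^n}-\mathbf T_{p^{n+2}})$ equals (up to sign) the $(|D|,s)$-th coefficient of $\mathbf h|_{1/2}(\mathbf T_{p^n}-\mathbf T_{p^{n+2}})$ with $\mathbf h=\vartheta^{1/2}_{\Delta,r}(F)$. Now the part (1) computation applies verbatim on the single square class through $(|D|,s)$, and the only missing ingredient is a denominator bound on $a^+_{\mathbf h}(|D|n^2,sn)$ --- which is exactly what Lemma \ref{LemSqrDenoms} supplies (and this is where that lemma is actually used). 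In short, Lemma \ref{LemSqrDenoms} belongs in part (2), not part (1), and it replaces the CM-value argument you attempted.
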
 

\begin{proof}[Proof of Proposition \ref{PropDiffHeckeHalf}]
The proof of part (\ref{PDHH_1/2}) involves a short exercise using formula (\ref{EqnTHalf2}). Write $D=D'p^{2m}$ with $p^2$ not dividing $D'.$ Then only terms $\mathbf U_a \mathbf S_b\mathbf V_c$ of the Hecke operators with $c\mid p^m$ contribute to the $(D,s)$-th coefficient. Moreover, the terms $a\mathbf U_a \mathbf S_b\mathbf V_c$ of the $ \mathbf T_{p^{n}}$ cancel with terms $a\mathbf U_a \mathbf S_{bp^2}\mathbf V_c$ from the $ \mathbf T_{p^{n+2}}$ whenever $b>1$. Therefore the contributing terms $a\mathbf U_a \mathbf S_b\mathbf V_c$ of the Hecke operators have $b$ and $c$ bounded. As $n$ increases, so must the power of $p$ dividing $a$. Since $\mathbf f$ has bounded denominators, the contribution from these terms $p$-adically goes to $0$. 

Part (\ref{PDHH_3/2}) relies essentially on the calculations behind (\ref{EqnDualityHalf}) and Lemma \ref{LemSqrDenoms}. If $D$ is not fundamental, then recall that we defined the lift $\vartheta^{3/2}_{D,s}(F)$ in terms of Hecke operators which commute with the lift. In particular, We have 
\[\vartheta^{3/2}_{D,s}(F)=\vartheta^{3/2}_{D',s'}(F'),
\]
for some $F'$, and with $D'$ fundamental. Note that $\xi_0 F'=\alpha\xi_0 F$ for some (possibly zero)  $\alpha \in K$ since $\xi_0 F$ is an eigenform for the Hecke operators. In particular, $F'\in H^g_0(N;\C)$. Therefore we will assume without loss of generality that $D$ is fundamental.
Let 
\[F_n:=F|_0 (\hatT_{p^n}-\hatT_{p^{n+2}}),\]
then we have that 
\[
\mathbf a_{F_n}(\Delta,r;D,s )=-\mathbf b_{F_n}(D,s;\Delta,r ).
\]
If $\mathbf h:=\vartheta^{1/2}_{\Delta,r} (F)|_{1/2,\overline {\tilde \rho}}$, then the left-hand side is a coefficient of  $\mathbf h|_{1/2,\overline{\tilde \rho}} \left(\mathbf T_{p^n}-\mathbf T_{p^{n+2}}\right).$ If $\mathbf h$ is weakly holomorphic, then the statement follows immediately from part (\ref{PDHH_1/2}).

If $\mathbf h$ is not weakly holomorphic, we can still adapt this argument. 
From Lemma \ref{LemSqrDenoms} we know that the coefficients $\mathbf{a}_{F_n}(\Delta,r;Dn^2,sn )\in K$ for all $n$, and have bounded denominators (here we have used that $D$ is fundamental). Following the argument above for part (\ref{PDHH_1/2}), but restricting our attention only to this square class, we see that the specified coefficient $\mathbf {a}_{F_n}(\Delta,r;D,s )$ converges $\frakp$-adically to zero as $n$ increases.  
\end{proof}

We now complete the proof of Theorem \ref{ThmThetaConv}.
 \begin{proof}[Proof of Theorem \ref{ThmThetaConv}]
 Equation (\ref{DefThetap}) shows that the trace formulas given in Theorem \ref{ThmTraceCoeffs} hold, so long as the coefficients converge. Given Proposition \ref{PropDiffHeckeHalf}, convergence follows essentially as in Section \ref{SecConstruction}, \emph{mutatis mutandis}. 
 Proposition \ref{PropDiffHeckeHalf} and (\ref{prodH}) show that the principal part converges to the same shape. In fact we see that if our starting form is weakly holomorphic, the limit has the same $q$-expansion at all components. We will come back to this point in the proof of Theorem \ref{ThmCorrespondence} (\ref{ThmCorr_AlphaTrace}).
 \end{proof}~

\begin{proof}[Proof of Theorem \ref{ThmCorrespondence} (\ref{ThmCorr_AlphaTrace})] ~
 Following the argument in the proof of Proposition \ref{PropTransValsInt}, we reduce to the case that $F^\nu$ has vanishing constant terms at all cusps. Ler $h=rs$ be any factorization of $h$ so that $r^2\equiv \Delta \pmod(4N)$ and $s^2\equiv D \pmod{4N}.$
 
Theorems \ref{ThmVanishingL} and \ref{ThmALgebraicCross} show that if ${L(\xi_0F^\sigma,\Delta,1)L'(\xi_0F^\sigma,D,1)=0},$ then either $\vartheta^{1/2}_{\Delta,r}(F^\sigma)$ or  $\vartheta^{3/2}_{D,s}(F^\sigma)$ is weakly holomorphic with coefficients in $K$. Then (\ref{DefThetap}) and Proposition \ref{PropDiffHeckeHalf} show that the coefficients of $\vartheta^\kappa_{*,*}(F^\frakp)$ are the same as those of $\vartheta^\kappa_{*,*}(F^\sigma)$.

More generally, we can consider $\vartheta^{3/2}_{D,h}(F^\frakp)$ and $\vartheta^{3/2}_{D,h}(F^\sigma).$ The proof is then little more than that of Theorem \ref{ThmCorrespondence} (\ref{ThmCorr_AlphaCoeff}), using the argument in the proof of Theorem \ref{ThmALgebraicCross} to translate to this setting.  If $\mathbf g\in S_{3/2,\overline {\tilde \rho}}(N;K)$ maps to $g$ under the Shimura correspondence, 
then there must be constants $\boldsymbol\alpha^\sigma_{D}\in \C$ and $\boldsymbol\alpha^\frakp_{D}\in K_\frakp$ so that

\[
\left(\vartheta^{3/2}_{D,h}(F^\sigma)(q)  - \boldsymbol\alpha^\sigma_{D}\mathbf g(q)\right) =
\left(\vartheta^{3/2}_{D,h}(F^\frakp)(q)  - \boldsymbol\alpha^\frakp_{D}\mathbf g(q)\right)\in K((q)).
\]
The claim follows by considering individual coefficients.

\end{proof}

\bibliography{refs}{}

\begin{thebibliography}{10}

\bibitem{AlfesKudlaMillson}
Claudia Alfes.
\newblock Formulas for the coefficients of half-integral weight harmonic
  {M}aa\ss\ forms.
\newblock {\em Math. Z.}, 277(3-4):769--795, 2014.

\bibitem{ECMock}
Claudia Alfes, Michael Griffin, Ken Ono, and Larry Rolen.
\newblock Weierstrass mock modular forms and elliptic curves.
\newblock {\em Res. Number Theory}, 1:1:24, 2015.

\bibitem{AlfesMillson}
Claudia Alfes-Neumann and Markus Schwagenscheidt.
\newblock On a theta lift related to the {S}hintani lift, {P}reprint.
\newblock {\em {A}rxiv: https://arxiv.org/abs/1605.07054}.

\bibitem{Bo1}
Richard~E. Borcherds.
\newblock Automorphic forms with singularities on {G}rassmannians.
\newblock {\em Invent. Math.}, 132(3):491--562, 1998.

\bibitem{MockAsPAdic}
Kathrin Bringmann, Pavel Guerzhoy, and Ben Kane.
\newblock Mock modular forms as {$p$}-adic modular forms.
\newblock {\em Trans. Amer. Math. Soc.}, 364(5):2393--2410, 2012.

\bibitem{PAdicCouplingHalf}
Kathrin Bringmann, Pavel Guerzhoy, and Ben Kane.
\newblock Half-integral weight p-adic coupling of weakly holomorphic and
  holomorphic modular forms.
\newblock {\em Res. Number Theory}, 1:1:26, 2015.

\bibitem{BringmannOnoMaassPoincare}
Kathrin Bringmann and Ken Ono.
\newblock Arithmetic properties of coefficients of half-integral weight
  {M}aass-{P}oincar\'e series.
\newblock {\em Math. Ann.}, 337(3):591--612, 2007.

\bibitem{BruinierOno}
Jan Bruinier and Ken Ono.
\newblock Heegner divisors, {$L$}-functions and harmonic weak {M}aass forms.
\newblock {\em Ann. of Math. (2)}, 172(3):2135--2181, 2010.

\bibitem{BruinierFunke}
Jan~H. Bruinier and Jens Funke.
\newblock On two geometric theta lifts.
\newblock {\em Duke Math. J.}, 125(1):45--90, 2004.

\bibitem{BruinierFunkeTraces}
Jan~H. Bruinier and Jens Funke.
\newblock Traces of {CM} values of modular functions.
\newblock {\em J. Reine Angew. Math.}, 594:1--33, 2006.

\bibitem{ThetaLiftings}
Jan~H. Bruinier, Jens Funke, and {\"O}zlem Imamo{\=g}lu.
\newblock Regularized theta liftings and periods of modular functions.
\newblock {\em J. Reine Angew. Math.}, 703:43--93, 2015.

\bibitem{BruinierOno2}
Jan~H. Bruinier and Ken Ono.
\newblock Algebraic formulas for the coefficients of half-integral weight
  harmonic weak {M}aass forms.
\newblock {\em Adv. Math.}, 246:198--219, 2013.

\bibitem{BOR}
Jan~H. Bruinier, Ken Ono, and Robert~C. Rhoades.
\newblock Differential operators for harmonic weak {M}aass forms and the
  vanishing of {H}ecke eigenvalues.
\newblock {\em Math. Ann.}, 342(3):673--693, 2008.

\bibitem{BruinierStein}
Jan~H. Bruinier and Oliver Stein.
\newblock The {W}eil representation and {H}ecke operators for vector valued
  modular forms.
\newblock {\em Mathematische Zeitschrift}, 264:249--270, 2010.

\bibitem{Candelori}
Luca Candelori.
\newblock Towards a p-adic theory of harmonic weak {M}aass forms.
\newblock {\em M.Sc. Thesis, McGill University}, 2010.

\bibitem{CandeloriGeometric}
Luca Candelori.
\newblock Harmonic weak {M}aass forms of integral weight: a geometric approach.
\newblock {\em Math. Ann.}, 360(1-2):489--517, 2014.

\bibitem{CandeloriCastella}
Luca Candelori and Francesc Castella.
\newblock A geometric perspective on p-adic properties of mock modular forms.
\newblock {\em Res. Math. Sci.}, 4:4:5, 2017.

\bibitem{DIT1}
W.~Duke, \"O. Imamo{\=g}lu, and \'A. T\'oth.
\newblock Cycle integrals of the {$j$}-function and mock modular forms.
\newblock {\em Ann. of Math. (2)}, 173(2):947--981, 2011.

\bibitem{DIT2}
W.~Duke, \"O. Imamo{\=g}lu, and \'A. T\'oth.
\newblock Real quadratic analogs of traces of singular moduli.
\newblock {\em Int. Math. Res. Not. IMRN}, (13):3082--3094, 2011.

\bibitem{DukeJenkins}
W.~Duke and Paul Jenkins.
\newblock Integral traces of singular values of weak {M}aass forms.
\newblock {\em Algebra Number Theory}, 2(5):573--593, 2008.

\bibitem{Dwork}
B.~Dwork.
\newblock The {$U_{p}$} operator of {A}tkin on modular functions of level
  {$2$}\ with growth conditions.
\newblock pages 57--67. Lecture Notes in Math., Vol. 350, 1973.

\bibitem{GKZ}
B.~Gross, W.~Kohnen, and D.~Zagier.
\newblock Heegner points and derivatives of {$L$}-series. {II}.
\newblock {\em Math. Ann.}, 278(1-4):497--562, 1987.

\bibitem{ZagiersAdele}
Pavel Guerzhoy.
\newblock On {Z}agier's adele.
\newblock {\em Res. Math. Sci.}, 1:Art. 7, 19, 2014.

\bibitem{PAdicCoupling}
Pavel Guerzhoy, Zachary~A. Kent, and Ken Ono.
\newblock {$p$}-adic coupling of mock modular forms and shadows.
\newblock {\em Proc. Natl. Acad. Sci. USA}, 107(14):6169--6174, 2010.

\bibitem{Hida}
Haruzo Hida.
\newblock Iwasawa modules attached to congruences of cusp forms.
\newblock {\em Ann. Sci. \'Ecole Norm. Sup. (4)}, 19(2):231--273, 1986.

\bibitem{Hovel}
M.~H\"ovel.
\newblock Automorphe formen mit {S}ingularit{\''a}ten auf dem hyperbolischen
  {R}aum.
\newblock 2012.

\bibitem{KaneCong}
Ben Kane and Matthias Waldherr.
\newblock Explicit congruences for mock modular forms.
\newblock {\em J. Number Theory}, 166:1--18, 2016.

\bibitem{Katz}
Nicholas~M. Katz.
\newblock {$p$}-adic properties of modular schemes and modular forms.
\newblock pages 69--190. Lecture Notes in Mathematics, Vol. 350, 1973.

\bibitem{MillerPixton}
Alison Miller and Aaron Pixton.
\newblock Arithmetic traces of non-holomorphic modular invariants.
\newblock {\em Int. J. Number Theory}, 6(1):69--87, 2010.

\bibitem{Niebur}
D.~Niebur.
\newblock A class of nonanalytic automorphic functions.
\newblock {\em Nagoya Math. J.}, 52:133--145, 1973.

\bibitem{Serre}
Jean-Pierre Serre.
\newblock Formes modulaires et fonctions z\^eta {$p$}-adiques.
\newblock pages 191--268. Lecture Notes in Math., Vol. 350, 1973.

\bibitem{Silverman1}
Joseph~H. Silverman.
\newblock {\em The arithmetic of elliptic curves}, volume 106 of {\em Graduate
  Texts in Mathematics}.
\newblock Springer, Dordrecht, second edition, 2009.

\bibitem{SZ}
Nils-Peter Skoruppa and Don Zagier.
\newblock Jacobi forms and a certain space of modular forms.
\newblock {\em Invent. Math.}, 94(1):113--146, 1988.

\bibitem{Walds}
J.-L. Waldspurger.
\newblock Sur les coefficients de {F}ourier des formes modulaires de poids
  demi-entier.
\newblock {\em J. Math. Pures Appl. (9)}, 60(4):375--484, 1981.

\bibitem{ZagierEisRiem}
Don Zagier.
\newblock Eisenstein series and the {R}iemann zeta function.
\newblock In {\em Automorphic Forms, Representation Theory and Arithmetic},
  pages 275--301. Springer-Verlag Berlin–Heidelberg–New York, 1981.

\bibitem{ZagierTraces}
Don Zagier.
\newblock Traces of singular moduli.
\newblock In {\em Motives, polylogarithms and {H}odge theory, {P}art {I}
  ({I}rvine, {CA}, 1998)}, volume~3 of {\em Int. Press Lect. Ser.}, pages
  211--244. Int. Press, Somerville, MA, 2002.

\end{thebibliography}
\bibliographystyle{plain}

\end{document}